\tikzstyle{vertex}=[ circle, fill, draw, inner sep=0pt, minimum size=4pt,]
\tikzstyle{edge}= [thick]
\newtheorem*{cor}{Corollary}%[section]
\newtheorem*{lem}{Lemma}
\newtheorem*{prop}{Proposition}
\theoremstyle{definition}
\newtheorem*{defn}{Definition}
\theoremstyle{definition}
\newtheorem{thm}{Theorem}
\newtheorem*{thm*}{Theorem}
\newtheorem*{rem}{Remark}
\newenvironment{pf}{\proof}{\endproof}
\newcounter{cnt}
\newenvironment{enumerit}{\begin{list}{{\hfill\rm(\roman{cnt})\hfill}}{%
\settowidth{\labelwidth}{{\rm(iv)}}\leftmargin=\labelwidth%
\advance\leftmargin by \labelsep\rightmargin=0pt\usecounter{cnt}}}{\end{list}} \makeatletter
\def\mydggeometry{\makeatletter\dg@YGRID=1\dg@XGRID=20\unitlength=0.003pt\makeatother}
\makeatother \theoremstyle{remark}
\numberwithin{equation}{section}
 \DeclareMathOperator{\Ht}{ht} 
\let\bwdg\bigwedge
\def\bigwedge{{\textstyle\bwdg}}
\begin{document}

\newcommand{\thmref}[1]{Theorem~\ref{#1}}
\newcommand{\secref}[1]{Section~\ref{#1}}
\newcommand{\lemref}[1]{Lemma~\ref{#1}}
\newcommand{\propref}[1]{Proposition~\ref{#1}}
\newcommand{\corref}[1]{Corollary~\ref{#1}}
\newcommand{\remref}[1]{Remark~\ref{#1}}
\newcommand{\defref}[1]{Definition~\ref{#1}}
\newcommand{\er}[1]{(\ref{#1})}
\newcommand{\id}{\operatorname{id}}
\newcommand{\ord}{\operatorname{\emph{ord}}}
\newcommand{\sgn}{\operatorname{sgn}}
\newcommand{\wt}{\operatorname{wt}}
\newcommand{\tensor}{\otimes}
\newcommand{\from}{\leftarrow}
\newcommand{\nc}{\newcommand}
\newcommand{\rnc}{\renewcommand}
\newcommand{\dist}{\operatorname{dist}}
\newcommand{\qbinom}[2]{\genfrac[]{0pt}0{#1}{#2}}
\nc{\cal}{\mathcal} \nc{\goth}{\mathfrak} \rnc{\bold}{\mathbf}
\renewcommand{\frak}{\mathfrak}
\newcommand{\supp}{\operatorname{supp}}
\newcommand{\Irr}{\operatorname{Irr}}
\newcommand{\psym}{\mathcal{P}^+_{K,n}}
\newcommand{\psyml}{\mathcal{P}^+_{K,\lambda}}
\newcommand{\psymt}{\mathcal{P}^+_{2,\lambda}}
\renewcommand{\Bbb}{\mathbb}
\nc\bomega{{\mbox{\boldmath $\omega$}}} \nc\bpsi{{\mbox{\boldmath $\Psi$}}}
 \nc\balpha{{\mbox{\boldmath $\alpha$}}}
 \nc\bpi{{\mbox{\boldmath $\pi$}}}
  \nc\bxi{{\mbox{\boldmath $\xi$}}}
\nc\bmu{{\mbox{\boldmath $\mu$}}} \nc\bcN{{\mbox{\boldmath $\cal{N}$}}} \nc\bcm{{\mbox{\boldmath $\cal{M}$}}} \nc\blambda{{\mbox{\boldmath
$\lambda$}}}\nc\bnu{{\mbox{\boldmath $\nu$}}}

\newcommand{\Tmn}{\bold{T}_{\lambda^1, \lambda^2}^{\nu}}

\newcommand{\lie}[1]{\mathfrak{#1}}
\newcommand{\ol}[1]{\overline{#1}}
\makeatletter
\def\section{\def\@secnumfont{\mdseries}\@startsection{section}{1}%
  \z@{.7\linespacing\@plus\linespacing}{.5\linespacing}%
  {\normalfont\scshape\centering}}
\def\subsection{\def\@secnumfont{\bfseries}\@startsection{subsection}{2}%
  {\parindent}{.5\linespacing\@plus.7\linespacing}{-.5em}%
  {\normalfont\bfseries}}
\makeatother
\def\subl#1{\subsection{}\label{#1}}
 \nc{\Hom}{\operatorname{Hom}}
  \nc{\mode}{\operatorname{mod}}
\nc{\End}{\operatorname{End}} \nc{\wh}[1]{\widehat{#1}} \nc{\Ext}{\operatorname{Ext}} \nc{\ch}{\operatorname{ch}} \nc{\ev}{\operatorname{ev}}
\nc{\Ob}{\operatorname{Ob}} \nc{\soc}{\operatorname{soc}} \nc{\rad}{\operatorname{rad}} \nc{\head}{\operatorname{head}}
\def\Im{\operatorname{Im}}
\def\gr{\operatorname{gr}}
\def\mult{\operatorname{mult}}
\def\Max{\operatorname{Max}}
\def\ann{\operatorname{Ann}}
\def\sym{\operatorname{sym}}
\def\loc{\operatorname{loc}}
\def\Res{\operatorname{\br^\lambda_A}}
\def\und{\underline}
\def\Lietg{$A_k(\lie{g})(\bsigma,r)$}
\def\res{\operatorname{res}}

 \nc{\Cal}{\cal} \nc{\Xp}[1]{X^+(#1)} \nc{\Xm}[1]{X^-(#1)}
\nc{\on}{\operatorname} \nc{\Z}{{\bold Z}} \nc{\J}{{\cal J}} \nc{\C}{{\bold C}} \nc{\Q}{{\bold Q}}
\renewcommand{\P}{{\cal P}}
\nc{\N}{{\Bbb N}} \nc\boa{\bold a} \nc\bob{\bold b} \nc\boc{\bold c} \nc\bod{\bold d} \nc\boe{\bold e} \nc\bof{\bold f} \nc\bog{\bold g}
\nc\boh{\bold h} \nc\boi{\bold i} \nc\boj{\bold j} \nc\bok{\bold k} \nc\bol{\bold l} \nc\bom{\bold m} \nc\bon{\bold n} \nc\boo{\bold o}
\nc\bop{\bold p} \nc\boq{\bold q} \nc\bor{\bold r} \nc\bos{\bold s} \nc\boT{\bold t} \nc\boF{\bold F} \nc\bou{\bold u} \nc\bov{\bold v}
\nc\bow{\bold w} \nc\boz{\bold z} \nc\boy{\bold y} \nc\ba{\bold A} \nc\bb{\bold B} \nc\bc{\mathbb C} \nc\bd{\bold D} \nc\be{\bold E} \nc\bg{\bold
G} \nc\bh{\bold H} \nc\bi{\bold I} \nc\bj{\bold J} \nc\bk{\bold K} \nc\bl{\bold L} \nc\bm{\bold M} \nc\bn{\mathbb N} \nc\bo{\bold O} \nc\bp{\bold
P} \nc\bq{\bold Q} \nc\br{\bold R} \nc\bs{\bold S} \nc\bt{\bold T} \nc\bu{\bold U} \nc\bv{\bold V} \nc\bw{\bold W} \nc\bz{\mathbb Z} \nc\bx{\bold
x} \nc\KR{\bold{KR}} \nc\rk{\bold{rk}} \nc\het{\text{ht }}

\nc\toa{\tilde a} \nc\tob{\tilde b} \nc\toc{\tilde c} \nc\tod{\tilde d} \nc\toe{\tilde e} \nc\tof{\tilde f} \nc\tog{\tilde g} \nc\toh{\tilde h}
\nc\toi{\tilde i} \nc\toj{\tilde j} \nc\tok{\tilde k} \nc\tol{\tilde l} \nc\tom{\tilde m} \nc\ton{\tilde n} \nc\too{\tilde o} \nc\toq{\tilde q}
\nc\tor{\tilde r} \nc\tos{\tilde s} \nc\toT{\tilde t} \nc\tou{\tilde u} \nc\tov{\tilde v} \nc\tow{\tilde w} \nc\toz{\tilde z} \nc\woi{w_{\omega_i}}
\nc\chara{\operatorname{Char}}

\title{A Steinberg type decomposition theorem for higher level   Demazure modules}
\author[Chari, Shereen,  Venkatesh and Wand ]{Vyjayanthi Chari, Peri Shereen, R.Venkatesh  and Jeffrey Wand}\address{\noindent Department of Mathematics, University of California, Riverside, CA 92521}
\email{chari@math.ucr.edu, psheroo1@math.ucr.edu; wand@math.ucr.edu}
\address{\noindent Department of Mathematics, Tata Institute of Fundamental Research, Homi Bhabha Road, Mumbai 400005, India.}
\email{rvenkat@math.tifr.res.in}
\thanks{V.C. was partially supported by DMS-0901253 and DMS- 1303052.}
\maketitle
\begin{abstract}
We study Demazure modules which  occur  in a level $\ell$ irreducible integrable representation of an affine Lie algebra. We also assume that they  are stable under the action of the standard maximal parabolic subalgebra of the affine Lie algebra.  We prove that such  a module is isomorphic to the fusion product of  \lq\lq prime\rq\rq \  Demazure modules, where the prime factors are indexed by dominant integral weights which are either a multiple of $\ell$ or take value less than $\ell$ on all simple coroots.
Our proof depends on  a technical result  which we prove in all the classical cases and $G_2$.  Calculations with mathematica  show that this result is correct  for small values of the level. Using our result, we  show that there exist generalizations of $Q$--systems to   pairs of  weights where one of the weights is not necessarily rectangular and is of a different level.  Our results also allow us to compare the multiplicities of  an irreducible representation occuring in the tensor product of certian   pairs of irreducible representations, i.e., we establish a version of Schur positvity for such pairs of irreducible modules for a simple Lie algebra.

  \end{abstract}

\section{Introduction}

 Demazure modules associated to simple Lie algebra or more generally a  Kac--Moody Lie  algbera $\lie g$ have been studied intensively since their introduction in \cite{Dem}. These modules, which are actually modules  for a Borel subalgebra of the Lie algebra,   are indexed by a dominant integral weight $\Lambda$ and an element $w$ of the Weyl group.  In this paper we shall be concerned  with affine Lie algebras and a particular family of Demazure modules: namely those which are preserved by a maximal parabolic subalgebra containing the Borel. More precisely, let $\lie g$ be  a simple finite--dimensional complex Lie algebra and $\widehat{\lie g}$ the corresponding affine Lie algebra. Then the maximal parabolic subalgebra of interest is the current algebra $\lie g[t]$ which is the algebra of polynomial maps $\bc\to\lie g$ with the obvious pointwise bracket.   The $\lie g[t]$--stable Demazure modules are  indexed by a pair $(\ell, \lambda)$, where $\ell$ is the level of the integrable representation of $\widehat{\lie g}$ and $\lambda$ is a dominant integral weight of $\lie g$ and we denote the corresponding module by $D(\ell,\lambda)$.   In the case when $\ell=1$, these modules are interesting for a variety of reasons,  including the connection with Macdonald polynomials established in \cite{S}  for $\lie {sl}_{r+1}$ and in \cite{I} in general. 

 Our interest in these modules arise from their relationship with  the representation theory of quantum affine  algebras. This connection was originally developed in 
 \cite{Ckir}, \cite{CPweyl},  \cite{CM} where it was shown that  the classical limit of certain irreducible representations of the quantum affine algebra can be viewed as graded representations of $\lie g[t]$.  The classical limits were first related to the  $\lie g[t]$--stable  Demazure modules in level one representations of affine Lie algebras in \cite{CL} for $\lie{sl}_{r+1}$.   In that paper, the connection was also made between these modules and the fusion product defined in \cite{FL} of representations of $\lie g[t]$.
 In \cite{CM}  it was shown that a  Kirillov--Reshetikhin module  for a quantum affine algebra  is similarly related to a Demazure module when $\lie g$ is of classical type.

  In \cite{FoL1} and \cite{FoL} the authors worked with arbitrary untwisted affine Lie algebras and with particular classes of $\lie g[t]$--stable Demazure module . In the simply--laced case  for instance, they studied the modules $D(\ell,\ell\mu)$ where $\mu$ is a dominant integral weight of $\lie g$. They proved that such modules were the fusion product of the classical limit of the Kirillov--Reshetikhin modules defined in \cite{CM}. (The definition of fusion products of $\lie g[t]$--modules is recalled in Section 2 of this paper, for the moment it suffices to say that it is a procedure which defines a cyclic graded $\lie g[t]$--module structure on a tensor product of finite--dimensional  $\lie g$--modules. In particular, the underlying $\lie g$ module structure is unchanged, where we are regarding $\lie g$ as the subalgebra $\lie g[t]$ consisting  of constant maps).

A completely obvious question is:  what is the analog of the results of \cite{FoL1} and \cite{FoL}   for the module $D(\ell,\ell\mu+\lambda)$ where $\lambda$ is an arbitrary dominant integral weight.  A much less obvious,  but very interesting reason to study this question is the following:  when $\ell =2$ and  in the case of $\lie {sl}_{n+1}$,  these modules are related to the modules for the quantum affine algebra which occur in the work of Hernandez--Leclerc (see \cite{HL}).
This relationship  is  made precsie in \cite{BrCM}.

Recall that Steinberg's tensor product theorem asserts that  a simple module $L(\lambda)$ of an algebraic group over characteristic  $p$  is isomorphic to  a tensor product $L(p\lambda_1)\otimes L(\lambda_0)$ where $\lambda_0(h_i)\le p$ for all simple coroots.
Our first result establishes an analog of this replacing $p$ by $\ell$ and the tensor product by fusion product, i.e.,
$$D(\ell,\ell\mu+\lambda)\cong D(\ell,\ell\mu)*D(\ell,\lambda),$$ for all positive integers $\ell$ and dominant integral weights $\mu$ and $\lambda$ and if $\lie g$ is of classical type or $G_2$. The main obstruction to proving this result in general is a techincal propositon (Proposition \ref{key})  on the affine Weyl group which is problematic for $E_8$ and $F_4$. However, computer calculations show that this result is true for small values of $\ell$ and all $\lambda$ and $\mu$.

To continue the connection with the work of \cite{HL}, we define and study the notion of prime representations of $\lie g[t]$--modules: namely a module which is not isomorphic to a fusion product of non--trivial $\lie g[t]$--mdoules.   We prove that the modules $D(\ell,\ell\omega_i)$  where $\omega_i$ is a fundamental weight  and $D(\ell,\lambda)$ where $\lambda(h_i)\le \ell$ for all simple coroots,  are prime if $\lie g$ is simply--laced.
In fact we show that the underlying $\lie g$--module is not a tensor product of non--trivial $\lie g$--modules.
In the case when $\lie g$ is of type  type $A$ or   $D$ we show that any  Demazure module is a fusion product of prime Demazure modules.

We use our main result to study generalizations of  $Q$--systems (see \cite{HKOTY} for details, \cite{ked} for a more recent discussion and \cite{Her} ,  \cite{Nak} for the quantum analog). In the case of $\lie{sl}_{n+1}$, the $Q$--system is a classical identity of Schur functions associated to rectangular weights of a fixed height.  Equivalently, the $Q$--system is a short exact sequence $$0\to \bigotimes_{\{j: a_{i,j}=-1\}} V(\ell \omega_j)\to V(\ell\omega_i)\otimes V(\ell\omega_i)\to V(\ell+1)\omega_i)\otimes V((\ell-1)\omega_i)\to 0,$$ where $V(r\omega_i)$ is the irreducible representation of $sl_{n+1}$ with highest weight $r\omega_i$. In Theorem \ref{qsystem} of this paper,  we write down an analgous short exact sequence for the pair $V(\ell\omega_i)\otimes V(\lambda)$ for $\lambda$ staisfying the restriction that $\lambda(h_i)\le \ell$ for all simple coroots.  In  fact we show that we can replace the tensor product of $\lie{sl}_{n+1}$--modules by fusion products of $\lie{sl}_{n+1}[t]$--modules so that all the maps are completely canoncial.
It is interesting to note that the kernel is in general not a tensor or fusion product of irreducible representations of $\lie{ sl}_{n+1}$, but is a fusion product of prime Demazure modules.

{\em Acknowledgements.  Part of this work was done while the first author was vsiting the University of Paris 7 and the  Mathematics Institute in Cologne. It is a pleasure  for her  to thank  David Hernandez and  Peter Littelmann for  many discussions and for their  hospitality.  She also thanks Deniz Kuz for  helpful conversations.  The third author was partially supported by the department of mathematics at the University of California at Riverside, a fellowship from the Niels Henrik Abel Board and a postdoctoral fellowship at the Centre de Recherche mathematique (CRM)  as part of the thematic semester \lq\lq New Directions in Lie theory\rq\rq. He thanks these institutions for their support. He is also grateful to Erhard Neher for his support and encouragement  during the semester at the CRM. }

\section{Preliminaries}

\subsection{} Throughout the paper $\mathbb C$ denotes the field of complex numbers, $\mathbb Z$ the set of integers and $\bz_+$, $\bn$ the set of non--negative and positive integers respectively. Given any complex Lie algebra $\lie a$ we let $\bu(\lie a)$ be the universal enveloping algebra of $\lie a$. Also, if $t$ is any indeterminate  we let $\lie a[t]$ be the  Lie algebra of polynomial maps from $\mathbb C$ to $\lie a$ with the obvious pointwise Lie bracket: $$[x\otimes f, y\otimes g]=[x,y]\otimes fg,\ \ x,y\in\lie a,\ \ f,g\in\bc[t].$$  Let  $\ev_0: \lie a[t]\to \lie a$ be  the map of Lie algebras given by setting $t=0$.  The Lie algebra $\lie a[t]$  and its universal enveloping algebra inherit a grading from the degree grading of $\bc[t]$, thus an element $a_1\otimes t^{r_1}\cdots a_s\otimes t^{r_s}$, $a_j\in\lie a$, $r_j\in\bz_+$ for $1\le j\le s$ will have grade $r_1+\cdots+r_s$.
We shall be interested in $\bz$--graded modules for $\lie a[t]$. By this we mean a $\bz$--graded vector space $V=\oplus_{s\in\bz}V[s]$ which admits a compatible $\lie a[t]$--action, $$(\lie a\otimes t^r)V[s]\subset V[r+s].$$
A morphism of graded $\lie a[t]$--modules is just a degree zero map of $\lie a[t]$--modules. Given  $r\in\bz$ and a graded vector space $V$, we let $\tau_r^*V$ be the $r$--th graded shift of $V$. Clearly the pull--back of any $\lie a$--module $V$ by  $\ev_0$ defines the structure of a graded $\lie a[t]$--module on $V$ and we denote this module by $\ev_0^*V$.

\subsection{}  From now on  $\lie g$ will be a simple complex Lie algebra of rank $n$ and $\lie h$ a fixed Cartan subalgebra of $\lie g$. Let $R$ be the corresponding set of roots,  $\alpha_i$, $1\le i\le n$
be a set of simple roots and $R^+$ the corresponding set of positive roots and let $\theta$ be the highest root of $R^+$.  For $\alpha\in R^+$, we set $d_\alpha=1$ if $\alpha$ is long and $d_\alpha=2$ if $\alpha$ is short and $\lie g$ is not of type $G_2$. If $\lie g$ is of type $G_2$, then we set $d_\alpha=3$ if $\alpha$ is short. 
 The Weyl group $W$ of $R$ is  generated by simple reflections $\bos_i$, $1\le i\le n$ and $w_0$ denotes the unique longest element of $W$.

 Let $x^\pm_\alpha$, $\alpha\in R^+$, $h_i$, $1\le i\le n$ be a Chevalley basis for $\lie g$.  We have $$\lie g=\lie n^-\oplus\lie h\oplus \lie n^+, \ \ \lie h=\bigoplus_{i=1}^n\mathbb Ch_i,\ \ \lie n^\pm= \bigoplus_{\alpha\in R^+}\mathbb Cx^\pm_\alpha.$$
The fundamental weights  $\omega_i\in  \lie h^*$, $1\le i\le n$ are defined by setting  $\omega_i(h_j)=\delta_{i,j}$ where $\delta_{i,j}$ is the Kronecker delta symbol.    The weight lattice $P$ (resp. $P^+$) is the $\mathbb Z$--span (resp. $\mathbb Z_+$ span) of the fundamental weights. The root lattice $Q$ and  the subset $Q^+$ are defined in the obvious way using the simple roots. The co--weight lattice $L$ is the sublattice of $P$ spanned by the elements $d_i\omega_i$, $1\le i\le n$ and the co--root lattice $M$  is defined analogously. The subsets $L^+$ and $M^+$ are defined in the obvious way.
Let $\bz[P$] be the integral group ring of $P$ with basis $e(\lambda)$, $\lambda\in P$.

\subsection{}  For $\lambda\in P^+$, denote  by $V(\lambda)$ the simple finite--dimensional $\lie g$--module generated by an element $v_\lambda$ with defining relations $$\lie n^+ v_\lambda=0,\ \ h_iv_\lambda=\lambda(h_i) v_\lambda,\ \ (x_{\alpha_i}^-)^{\lambda(h_i)+1} v_\lambda=0,\ \ 1\le i\le n. $$ It is well--known that $V(\lambda)\cong V(\mu)$ iff $\lambda=\mu$ and that  any finite--dimensional $\lie g$--module is isomorphic to a direct sum of  modules $V(\lambda)$, $\lambda\in P^+$.  If $V$ is a $\lie h$--semisimple $\lie g$--module (in particular if $\dim V<\infty$),  we have $$ V=\bigoplus_{\mu\in\lie h^*}V_\mu,\ \ V_\mu=\{v\in V: hv=\mu(h)v,\ \ h\in\lie h\},$$ and we set $\wt V=\{\mu\in\lie h^*: V_\mu\ne 0\}.$  If $\dim V_\mu<\infty$ for all $\mu\in \wt V$, then we define $\ch_{\lie h} V:\lie h^*\to \bz_+$, by sending $\mu\to\dim V_\mu$. If $\wt V$ is a finite set, then $$\ch_{\lie h}V=\sum_{\mu\in\lie h^*}\dim V_\mu e(\mu)\in\bz[P].$$

\subsection{} We now define the untwisted affine Lie algebra associated to $\lie g$ and some related terminology (see \cite{Kac} for details). The affine Lie algebra  $\hat{\lie g}$ is given by
$$\hat{\lie g} = \lie g\otimes\mathbb{C}[t,t^{-1}]\oplus \mathbb{C}c\oplus\mathbb{C}d$$
where $c$ is the canonical central element, and $d$ acts as the derivation $t\frac{d}{dt}$ and commutator
$$[x\otimes t^r,  y\otimes t^s] =[x,y]\otimes t^{r+s}  + r\delta_{r,-s}(x,y)c,$$ where $(\ ,\ ): \lie g\times\lie g\to\bc$ is a symmetric nondegenerate invariant bilinear form on $\lie g$ normalized so that the square length of the long root is two.
The Cartan subalgebra of the affine Lie algebra is
$$\hat{\lie h} = \lie h \oplus \mathbb{C}c\oplus\mathbb{C}d.$$  Regard $\lie h^*$ as a subspace of $\hat{\lie h}^*$ by setting $\mu(c)=\mu(d)=0$ for all $\mu\in\lie h^*$.  Let $\delta,\Lambda_0\in\hat{\lie h}^*$ be  given by $$\delta(d)=1,\ \ \delta(\lie h\oplus\bc c)=0,\ \ \Lambda_0(c)=1,\ \ \Lambda_0(\lie h\oplus\bc d)=0.$$ Extend the non--degenerate form on $\lie h^*$ to a non--degenerate form on  $\hat{\lie h}^*$ by setting,$$
(\delta,\delta)\ \ =\ \ (\Lambda_0, \Lambda_0)=0, \ \  (\Lambda_0, \delta)=1.$$

The elements  $\alpha_i$, $0\le i\le n$ where  $\alpha_0=-\theta+\delta$ are a set of simple roots for the set of roots of $(\hat{\lie g}, \hat{\lie h})$.  Let $\hat R^+$ be the corresponding set of positive roots, $$\hat R^+=\{\alpha+r\delta:\alpha\in R, \ r\in\bn\}\ \cup R^+\ \cup\  \ \{r\delta: r\in\bn\}.$$
Set   $\hat{ \lie b}=\hat{\lie h}\bigoplus_{\alpha\in\hat R^+}\hat{\lie g}_\alpha$ and note that  $$ \lie g[t]\oplus\bc\oplus\bc d=\lie n^-\oplus\hat{\lie b},\ \qquad \lie g[t]=\lie n^-\oplus\lie h\bigoplus_{\alpha\in\hat R^+}\hat{\lie g}_\alpha.$$

\subsection{}\label{weyl} For $1\le i\le n$, set   $\Lambda_i=\omega_i+ \omega_i(h_\theta)\Lambda_0\in \hat{\lie h}^*$ . The set $\hat{P}^+$ of dominant integral affine weights is  defined to be the $\bz_+$--span of the elements $\Lambda_i+\bz\delta$,  $0\le i\le n$ and  $\hat P$  is defined similarly. The root lattice  $\hat Q$  is the $\bz$--span of the simple roots $\alpha_i$, $0\le i\le n$  and $\hat Q^+$ is defined in the obvious way. 

 The affine Weyl group $\widehat W$ acts on $\hat{\lie h}^*$ via reflections corresponding to the affine simple roots, in particular $w\delta=\delta$ for all $w\in\widehat W$.
An equivalent way to define the affine Weyl group is as follows.  The  finite Weyl group  $W$ acts on the  co--root lattice $M$ by restricting its action on $\lie h^*$ and we have $$\widehat W\cong W\ltimes t_M.$$
The extended Weyl group $\widetilde W$ is the semi--direct product of $\widehat W$ with the group of affine diagram automorphisms, denoted $\Sigma$, and $$\widetilde W\cong W\ltimes  t_{L}$$ where $L$ is the co-weight lattice.
Given $\mu\in M$ (resp. $L$) , we denote by $t_\mu$ the corresponding element of  $\widehat W$ (resp. $\widetilde W$). Then,
 \begin{equation}\label{translation} t_\mu(\lambda)=\lambda-(\lambda,\mu)\delta,\ \ \lambda\in \lie h^*\oplus\bc\delta ,\  \qquad t_\mu(\Lambda_0)=\Lambda_0 +\mu -\frac 12(\mu,\mu)\delta.\end{equation}
 Let $\bz[\hat P]$ be the integral group ring of $\hat P$ with basis $e(\Lambda)$ and let $I_\delta$ be the ideal of $\bz[\hat P]$ obtained by setting $e(\delta)=1$. Since we have identified $\lie h^*$ with a subspace of $\hat{\lie h}^*$, the group ring $\bz[P]$ is isomorphic to a subring of $\bz[\hat P]$ and the composite morphism
$$\bz[P]\hookrightarrow\bz[\hat P]\longrightarrow \bz[\hat P]/I_\delta,$$ is injective. Clearly, the action of $\widetilde W$ on $\hat P$ induces an action on $\bz[\hat P]$ and $\bz[\hat P]/I_\delta$ as well.

\subsection{}  For $\Lambda \in \widehat{P}^+$ let  $V(\Lambda)$ be the highest weight, irreducible, integrable $\hat{\lie g}$-module with highest weight $\Lambda$ and highest weight vector $v_\Lambda$. Then,
$$V(\Lambda)=\bigoplus_{\eta\in\hat Q^+} V(\Lambda)_{\Lambda-\eta},\ \   V(\Lambda)_{\Lambda-\eta}=\{v\in V(\Lambda): hv=(\Lambda-\eta)(h)v,\ \ h\in\hat{\lie h}^*\}.$$  For $w\in\widehat W$, we have $\dim V(\Lambda)_{w\Lambda}=1$ and  the corresponding Demazure module is, $$V_w(\Lambda) =\bu(\hat{\lie b})V(\Lambda)_{w\Lambda}.$$
 More generally, given,  $\sigma\in\Sigma$ and $w\in\widehat W$, set  $  V_{w\sigma}(\Lambda)= V_w(\sigma\Lambda).$ Since $V(\Lambda)_{\Lambda-\eta+r\delta}=0$ for all $r\in\bn$, it follows that 
$\dim V_{\sigma w}(\Lambda)<\infty$. In the special case when   $w\Lambda|_{\lie h}\in -P^+$,  the Demazure module $V_{w}(\Lambda)$ is $\lie g$--stable, in other words it is a finite--dimensional module for $\lie g[t]$. The action of $d$ defines a grading on $V_{w}(\Lambda)$ which is compatible with the $\bz$--grading on $\lie g[t]$. Finally, note that   for $w\in\widetilde W$, the function  $\ch_{\hat{\lie h}} V_w(\Lambda):\hat P \to\bz $ is the mapping $\Lambda'\to \dim V_w(\Lambda)_{\Lambda'}$ and is an element of $\bz[\hat P]$.

\subsection{} We recall the notion of fusion products of representations of $\lie g[t]$ introduced in \cite{FL}.
Let  $V$ be  a finite--dimensional cyclic $\lie g[t]$ module generated by an element $v$ and for $r\in\bz_+$  set  $$F^rV = \left(\bigoplus_{0\leq s \leq r} \bu(\lie g[t])[s]\right).v$$ Clearly $F^rV$ is a $\lie g$--submodule of $V$ and we have a finite $\lie g$--module filtration $$0\subset F^0V\subset F^1V\subset\cdots \subset F^pV =V,$$  for some $p\in\bz_+$. The associated graded vector space $\gr V$ acquires a graded $\lie g[t]$--module structure in a natural way and is  generated by the image of $v$ in $\gr V$.

Given  a  $\lie g[t]$ module $V $    and $z\in\bc$, let $V^z$ be the $\lie g[t]$--module with action $$(x\otimes t^r) w= (x\otimes (t+z)^r)w,\ \ x\in\lie g,\ \  r\in\bz_+, w\in V.$$ If $V_s$, $1\le s\le k$ are cyclic finite--dimensional $\lie g[t]$--modules with cyclic vectors $v_s$, $1\le s\le k$ and $z_1,\cdots, z_k$ are distinct complex numbers then, the  fusion product $V_1^{z_1}*\cdots*V_k^{z_k}$ is defined to be $\gr \bv(\boz)$, where $\bv(\boz)$ is the tensor product
$$\bv(\boz) =V_1^{z_1}\otimes\cdots\otimes V_k^{z_k}.$$
It was proved in \cite{FL} that in fact $\bv(\boz) $ is cyclic and generated by $v_1\otimes\cdots\otimes v_m$ and hence the  fusion product  is cyclic on the image   $v_1*\cdots *v_m$ of this element. Clearly the definiton of the fusion product depends on the parameters $z_s$, $1\le s\le k$. However it is conjectured in \cite{FL}  and (proved in certain cases by various people,  \cite{CL},  \cite{FF}, \cite{FL}  \cite{FoL}, \cite{Kedem} for instance)  that under suitable conditions on $V_s$ and $v_s$, the fusion product is independent of the choice of the complex numbers. For ease of notation we shall often suppress the dependence on the complex numbers and write  $V_1*\cdots*V_k$ for $V_1^{z_1}*\cdots*V_k^{z_k}$.

\subsection{}  We conclude this section with a technical result which will be needed in the proof of Theorem \ref{mainthm}. Given $w\in \widehat W$, let $\ell(w)$ be the length of a reduced expression of $w$.  Clearly $\ell(w_1w_2)\le \ell(w_1)+\ell(w_2)$ for all $w_1, w_2\in\widehat W$.  An alternative characterization of $\ell(w)$ is  \begin{equation}\label{altchlength}\ell(w)=\#\{\alpha\in\hat R^+:  w\alpha\in -\hat R^+\}.\end{equation}  It is convenient to define the length of an element in the extended Weyl group as well, by
$$ \ell(\sigma w) = \ell (w),\ \  \textrm{for } w\in\widehat{W} \textrm{ and } \sigma\in \Sigma.$$   For $w\in\widetilde W$ set  $\hat R_w^+=\{\alpha\in\hat  R^+: w\alpha\in-\hat R^+\}$. Since $\Sigma$ is the group of automorphisms  of the Dynkin diagram of $\widehat{\lie g}$ it follows that
$\ell(w)=\# \hat R^ +_w$  as well. Note also that for all $w\in \widehat{W}$ and $\sigma\in \Sigma$ we have $\ell(\sigma w\sigma^{-1})=\ell(w)$ and hence $\ell(w\sigma)=\ell(w)$. 

\begin{prop}\label{add}\begin{enumerit}
\item[(i)]
Let  $w_1,w_2\in \widetilde W$ be such that  $\hat R^+_{w_2}\subset \hat R^+_{w_1w_2}$.  Then  $\ell(w_1w_2)=\ell(w_1)+\ell(w_2).$
 \item[(ii)]For $\lambda,\mu\in P^+$ and $w\in W$ we have $\ell(t_{-\mu}t_{-\lambda}w) =\ell(t_{-\mu}) + \ell(t_{-\lambda}w).$
 \end{enumerit}
\end{prop}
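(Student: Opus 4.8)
The plan is to derive both parts from the inversion-set description $\ell(w)=\#\hat R^+_w$ recorded just above the statement, which holds for all $w\in\widetilde W$. Throughout I will use that every $w\in\widetilde W$ sends roots to roots bijectively, fixes each imaginary root $r\delta$, and hence that only real roots ever enter an inversion set $\hat R^+_w$.

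For part (i) I would compute $\hat R^+_{w_1w_2}$ \emph{exactly} by splitting $\hat R^+$ according to the sign of $w_2\alpha$. The roots with $w_2\alpha\in-\hat R^+$ are precisely those of $\hat R^+_{w_2}$, and by the hypothesis $\hat R^+_{w_2}\subseteq\hat R^+_{w_1w_2}$ all of them are inversions of $w_1w_2$; this contributes $\ell(w_2)$ inversions. For the roots with $w_2\alpha\in\hat R^+$ one has $w_1w_2\alpha\in-\hat R^+$ iff $w_2\alpha\in\hat R^+_{w_1}$, and since $\alpha\mapsto w_2\alpha$ is a bijection of $\hat R$ this family is in bijection with $\{\gamma\in\hat R^+_{w_1}:\ w_2^{-1}\gamma\in\hat R^+\}=\hat R^+_{w_1}\setminus\hat R^+_{w_2^{-1}}$. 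These two families partition $\hat R^+_{w_1w_2}$, so $\ell(w_1w_2)=\ell(w_2)+\#\bigl(\hat R^+_{w_1}\setminus\hat R^+_{w_2^{-1}}\bigr)$ and the statement reduces to $\hat R^+_{w_1}\cap\hat R^+_{w_2^{-1}}=\emptyset$. This is exactly what the hypothesis forces: if $\gamma$ lay in the intersection, then $\alpha:=-w_2^{-1}\gamma\in\hat R^+$ would satisfy $w_2\alpha=-\gamma\in-\hat R^+$, so $\alpha\in\hat R^+_{w_2}$, while $w_1w_2\alpha=-w_1\gamma\in\hat R^+$, so $\alpha\notin\hat R^+_{w_1w_2}$, contradicting $\hat R^+_{w_2}\subseteq\hat R^+_{w_1w_2}$. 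Hence the second family has exactly $\ell(w_1)$ elements and $\ell(w_1w_2)=\ell(w_1)+\ell(w_2)$.

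For part (ii) I would apply (i) with $w_1=t_{-\mu}$ and $w_2=t_{-\lambda}w$, regarding both $t_{-\mu},t_{-\lambda}$ as elements of $\widetilde W$; since translations add, $w_1w_2=t_{-(\lambda+\mu)}w$, and everything comes down to verifying the hypothesis $\hat R^+_{t_{-\lambda}w}\subseteq\hat R^+_{t_{-(\lambda+\mu)}w}$. Using \er{translation}, for a real root $\alpha+r\delta\in\hat R^+$ one computes
$$t_{-\lambda}w(\alpha+r\delta)=w\alpha+\bigl(r+(w\alpha,\lambda)\bigr)\delta.$$
Since $r\ge 0$ and $\lambda\in P^+$, whenever $w\alpha\in R^+$ the $\delta$-coefficient is nonnegative and the image stays positive; therefore $\alpha+r\delta\in\hat R^+_{t_{-\lambda}w}$ forces $w\alpha\in R^-$, in which case the root is an inversion precisely when $r+(w\alpha,\lambda)\le 0$. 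This gives the explicit description
$$\hat R^+_{t_{-\lambda}w}=\{\alpha+r\delta\in\hat R^+:\ w\alpha\in R^-,\ r+(w\alpha,\lambda)\le 0\}.$$

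The inclusion is now immediate from dominance: for $w\alpha\in R^-$ and $\mu\in P^+$ we have $(w\alpha,\mu)\le 0$, so $r+(w\alpha,\lambda)\le 0$ implies $r+(w\alpha,\lambda+\mu)\le 0$; comparing the explicit descriptions of the two inversion sets yields $\hat R^+_{t_{-\lambda}w}\subseteq\hat R^+_{t_{-(\lambda+\mu)}w}$, and part (i) finishes the proof. The one step where I would proceed carefully, and the only real source of difficulty, is the explicit determination of $\hat R^+_{t_{-\lambda}w}$: one must keep the fixed imaginary roots $r\delta$ out of the count, handle the boundary case $r+(w\alpha,\lambda)=0$ correctly (there the image is the negative root $w\alpha$, so the root \emph{is} an inversion), and use that a dominant weight pairs nonpositively with negative roots. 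Part (i) itself is a routine fact about inversion sets; the substance of the proposition lies in packaging it with this translation computation.
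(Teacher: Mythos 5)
Your proof is correct, and its core is the same as the paper's: both parts rest on the formula $\ell(w)=\#\hat R^+_w$, your contradiction showing $\hat R^+_{w_1}\cap\hat R^+_{w_2^{-1}}=\emptyset$ is precisely the paper's argument that $w_2^{-1}\hat R^+_{w_1}\subset\hat R^+$ rewritten, and part (ii) is handled identically in both: apply (i) to $w_1=t_{-\mu}$, $w_2=t_{-\lambda}w$ and verify the hypothesis via the translation formula \eqref{translation} and dominance. The one genuine structural difference is in part (i). The paper proves the two inequalities separately: it gets $\ell(w_1w_2)\le\ell(w_1)+\ell(w_2)$ from subadditivity of length, which on $\widetilde W$ requires writing $w_s=\sigma_s w_s'$ with $\sigma_s\in\Sigma$ and using invariance of length under $\Sigma$-conjugation, and then obtains the reverse inequality by exhibiting the disjoint union $\hat R^+_{w_2}\cup w_2^{-1}\hat R^+_{w_1}$ inside $\hat R^+_{w_1w_2}$. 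You instead compute $\hat R^+_{w_1w_2}$ exactly, partitioning it according to the sign of $w_2\alpha$, which delivers both inequalities simultaneously and makes the $\Sigma$-conjugation step unnecessary; this is marginally more self-contained, at the cost of having to argue that the partition exhausts $\hat R^+_{w_1w_2}$ (which you do correctly, including keeping the $\widetilde W$-fixed imaginary roots out of the count). In part (ii) the paper's phrasing --- negativity of $t_{-\lambda}w(\alpha+p\delta)$ together with $\lambda\in P^+$ forces $w\alpha\in -R^+$, and $t_{-\mu}$ preserves $-(R^++\bz_+\delta)$ --- is your explicit inequality $r+(w\alpha,\lambda)\le 0\implies r+(w\alpha,\lambda+\mu)\le 0$ in disguise; your attention to the boundary case $r+(w\alpha,\lambda)=0$ is exactly the point that makes the explicit description of $\hat R^+_{t_{-\lambda}w}$ valid.
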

\begin{pf}

 Write $w_s=\sigma_s w'_s$ for some $\sigma_s\in \Sigma$ and $w'_s\in \widehat W$  for $s=1,2$. Hence we get $$\ell(w_1w_2)= \ell(w_1'\sigma_2 w_2')=\ell ((\sigma_2^{-1}w_1'\sigma_2)w_2')\le \ell(\sigma_2^{-1}w_1'\sigma_2)+\ell(w_2')=\ell(w_1)+\ell(w_2).$$ It remains to prove the reverse inequality. For this it is enough to prove that \begin{equation}\label{ge}    \hat R^+_{w_2}\cup w_2^{-1}\hat R^+_{w_1}\subset \hat R^+_{w_1w_2} \\ \ \ \ \ \ \hat R^+_{w_2}\cap w_2^{-1}\hat R^+_{w_1}=\emptyset.\end{equation}  To prove the inclusion, we only need to show that $w_2^{-1}\hat R^+_{w_1}\subset \hat R^+$.  For  this,  note that if $\beta\in \hat R^+$, we have $$
  -\beta\in  w_2^{-1}\hat R^+_{w_1}\implies w_2\beta\in -\hat R^+_{w_1}\subset -\hat R^+\implies w_1w_2\beta\in-\hat R^+,$$  by our hypothesis.
On the other hand we also have $$  -\beta\in  w_2^{-1}\hat R_{w_1}^+\implies -w_2\beta\in \hat R^+_{w_1}\implies -w_1w_2\beta\in-\hat R^+,$$  which is clearly absurd. The second assertion in \eqref{ge} follows from    $$\alpha\in w_2^{-1}\hat R^+_{w_1}\implies w_2\alpha\in \hat R^+_{w_1}\subset \hat R^+\implies\alpha\notin\hat R^+_{w_2},$$ and part (i) of the proposition is established.

For (ii) we see that using part (i), it suffices  to prove   that if  $$\alpha+p\delta \in\hat R^+,\ \ t _{-\lambda}w(\alpha+p\delta) \in-\hat R^+\implies t_{-\mu}t_{-\lambda}w(\alpha+p\delta)\in-\hat R^+.$$
Since $\mu\in P^+$ it follows from the explicit formulae  for the translations  that $t_{-\mu}$ preserves $-(R^++\bz_+\delta)$.
 Hence it suffices to show that  $$\alpha+p\delta \in\hat R^+,\ \ t _{-\lambda}w(\alpha+p\delta) \in\hat R^-\implies t_{-\lambda}w(\alpha+p\delta)\subset -(R^++\bz_+\delta),$$ i.e., that $w\alpha\in - R^+$.
But this is again clear from the formulae  because  $\lambda\in P^+$

\end{pf}

\section{The main results}

We begin this section by giving an alternate presentation of the $\lie g$--stable Demazure modules and then state our main result in Section \ref{mainthmsection}. We then discuss applications of our results, the notion of prime modules and also  a generalization of the $Q$--systems of \cite{HKOTY}.

\subsection{}  We  introduce a family of graded modules  for $\lie g[t]$. These are indexed by a pair $(\ell,\lambda)\in \bn\times P^+$ and the corresponding module is  denoted $D(\ell,\lambda)$. For $\alpha\in R^+$,  set $s_\alpha, m_\alpha\in\bn $  by $$\lambda(h_\alpha)= d_\alpha\ell (s_\alpha-1)+ m_\alpha,\ \  0<m_\alpha\le d_\alpha\ell.$$
 Then, $D(\ell,\lambda)$ is the $\lie g[t]$--module generated by an element $w_\lambda$ with defining relations:
\begin{gather}\label{locweyl} \lie n^+[t] w_\lambda=0,\ \ ( h_i\otimes t^s)w_\lambda=\delta_{s,0}\lambda(h_i) w_\lambda,\ \ (x_{\alpha_i}^-)^{\lambda(h_i)+1} w_\lambda=0,\ \ 1\le i\le n,\\
\label{demquot1} ( x_{\alpha}^-\otimes t^{s_{\alpha}})w_{\lambda}=0,\\ \label{demquot2}  \ (x_{\alpha}^-\otimes t^{s_{\alpha}-1})^{m_{\alpha}+1}w_\lambda =0,\ \ {\rm{if}}\ \ m_\alpha<d_\alpha\ell. \end{gather}

\begin{rem}\label{locfindim} The relations in \eqref{locweyl} guarantee that the module $D(\ell,\lambda)$ is finite--dimensional (a more detailed discussion of this can be found in \cite{CPweyl}). In particular this gives,$$(x_{\alpha}^-\otimes 1)^{\lambda(h_{\alpha})+1}w_\lambda =0,$$ for all $\alpha\in R^+$.
\end{rem}

 \subsection{} The defining relations of $D(\ell,\lambda)$ are graded, it follows that $D(\ell,\lambda)$ is a graded $\lie g[t]$--module once we declare the grade of $w_\lambda$ to be zero.  Clearly for $s\in\bz$,  the graded shift $\tau_s^*D(\ell,\lambda)$ is defined  by letting  $w_\lambda$ have grade  $s$. It is elementary to check that $\ev_0^*V(\lambda)$ is the unique  irreducible graded  quotient of $D(\ell,\lambda)$ and moreover that,
\begin{equation}\label{demev} D(\ell,\lambda)\cong\ev_0^* V(\lambda), \ \ {\rm {if}}\ \
\ \ \lambda(h_\alpha)\le d_\alpha\ell,\ \ {\rm{for\ all}}\ \ \alpha\in R^+.\end{equation}
It is sometimes necessary to consider simultaneously,  the  different level Demazure modules associated to a given weight $\lambda$, in which case we shall denote the generator of $D(\ell,\lambda)$ by $w_{\lambda,\ell}$ and the integers $s_\alpha$ and $m_\alpha$ by $s_{\alpha,\ell}$ and $m_{\alpha,\ell}$ respectively.
\begin{lem}\label{difflevel} For all $(\ell,\lambda)\in \bn\times P^+$, we have, \begin{equation*}\Hom_{\lie g[t]}(D(\ell,\lambda), D(\ell+1,\lambda))=\bc.\end{equation*} Moreover any non--zero map is surjective.\end{lem}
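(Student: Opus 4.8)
The plan is to exhibit a canonical surjection from $D(\ell,\lambda)$ onto $D(\ell+1,\lambda)$ by sending the generator $w_{\lambda,\ell}$ to $w_{\lambda,\ell+1}$, and then to argue that any nonzero $\lie g[t]$-map is forced to be (a scalar multiple of) this one and is surjective. Concretely, I would first verify that the assignment $w_{\lambda,\ell}\mapsto w_{\lambda,\ell+1}$ respects the defining relations, which amounts to checking that every relation imposed on the generator of $D(\ell,\lambda)$ continues to hold for the generator of $D(\ell+1,\lambda)$. The relations in \eqref{locweyl} are level-independent, so those are immediate. The content is in the relations \eqref{demquot1} and \eqref{demquot2}: I must compare the pairs $(s_{\alpha,\ell},m_{\alpha,\ell})$ and $(s_{\alpha,\ell+1},m_{\alpha,\ell+1})$ obtained from dividing $\lambda(h_\alpha)$ by $d_\alpha\ell$ versus $d_\alpha(\ell+1)$. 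Since increasing the level weakly decreases $s_\alpha$, one checks directly from the division identity $\lambda(h_\alpha)=d_\alpha\ell(s_{\alpha,\ell}-1)+m_{\alpha,\ell}$ that the relations defining $D(\ell+1,\lambda)$ are (Remark~\ref{locfindim} being the relevant consequence) implied by, or weaker than, those of $D(\ell,\lambda)$ on the generator; this yields a well-defined surjective graded morphism, so $\Hom_{\lie g[t]}(D(\ell,\lambda),D(\ell+1,\lambda))\neq 0$ and contains a surjection.

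Having produced one nonzero (surjective) map, I would next show the Hom space is exactly one-dimensional and that \emph{every} nonzero element is surjective. The key structural fact is that both $D(\ell,\lambda)$ and $D(\ell+1,\lambda)$ are cyclic, generated in degree zero by a highest-weight vector of weight $\lambda$, and that the degree-zero weight space of weight $\lambda$ in each is one-dimensional (spanned by $w_{\lambda,\ell}$, resp.\ $w_{\lambda,\ell+1}$). A graded $\lie g[t]$-morphism $f$ must preserve grade and $\lie h$-weight, so $f(w_{\lambda,\ell})$ lies in the $\lambda$-weight space of degree $0$ of $D(\ell+1,\lambda)$, hence $f(w_{\lambda,\ell})=c\,w_{\lambda,\ell+1}$ for some scalar $c\in\bc$. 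Since $w_{\lambda,\ell}$ generates $D(\ell,\lambda)$, the map $f$ is determined by $c$, giving $\dim\Hom\le 1$; combined with the surjection above this forces $\Hom_{\lie g[t]}(D(\ell,\lambda),D(\ell+1,\lambda))=\bc$. Moreover, if $c\neq 0$ then $w_{\lambda,\ell+1}$ is in the image, and since it generates $D(\ell+1,\lambda)$ the image is everything; thus every nonzero map is surjective.

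To make the weight-space claim airtight I would note that the $\lambda$-weight space of $D(\ell+1,\lambda)$ is spanned by $w_{\lambda,\ell+1}$ because $\lambda$ is the highest weight and $D(\ell+1,\lambda)$ is a highest-weight cyclic module; any vector of weight $\lambda$ is obtained from the generator by the action of a weight-zero element of $\bu(\lie g[t])$, and the only such contributions of grade zero come from $\bu(\lie h)$ acting by scalars. The genuine point requiring care is therefore the relation-checking in the first paragraph: I expect the main obstacle to be the bookkeeping in \eqref{demquot2}, specifically confirming that the power $m_{\alpha,\ell+1}+1$ in the level-$(\ell+1)$ relation does not exceed what is already annihilated at level $\ell$, and handling cleanly the boundary cases where $m_{\alpha,\ell}=d_\alpha\ell$ (so the conditional relation \eqref{demquot2} is vacuous at level $\ell$ but may be active at level $\ell+1$, or vice versa). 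One must verify in each such case that the stronger level-$\ell$ relations \eqref{demquot1}--\eqref{demquot2}, together with \eqref{locweyl}, still imply the corresponding level-$(\ell+1)$ relation on $w_{\lambda,\ell}$; this is a finite and elementary but slightly delicate casework on the residues of $\lambda(h_\alpha)$ modulo $d_\alpha\ell$ and $d_\alpha(\ell+1)$.
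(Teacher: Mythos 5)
Your skeleton is the same as the paper's: the $\lambda$--weight space of $D(\ell+1,\lambda)$ is spanned by $w_{\lambda,\ell+1}$, so any homomorphism sends $w_{\lambda,\ell}$ to a scalar multiple of $w_{\lambda,\ell+1}$ (giving $\dim\Hom\le 1$), existence comes from a relation check on generators, and surjectivity from cyclicity. (Minor point: the lemma concerns all $\lie g[t]$--maps, not only graded ones, but your weight--space argument needs no grading, since the full $\lambda$--weight space of $D(\ell+1,\lambda)$ is one--dimensional by PBW.) The genuine problem is that you state the direction of the relation check inconsistently, and in the two places where you spell it out you have it backwards --- and that direction \emph{is} the content of the lemma. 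For $w_{\lambda,\ell}\mapsto w_{\lambda,\ell+1}$ to extend to a homomorphism $D(\ell,\lambda)\to D(\ell+1,\lambda)$, one must show that $w_{\lambda,\ell+1}$ satisfies the defining relations of $D(\ell,\lambda)$, i.e.\ that the level--$(\ell+1)$ relations \emph{imply} the level--$\ell$ ones: raising the level strengthens the relations and shrinks the module, which is exactly why $D(\ell+1,\lambda)$ is a quotient of $D(\ell,\lambda)$ and not conversely. Your opening sentence says this correctly, but you then write that ``the relations defining $D(\ell+1,\lambda)$ are \dots implied by, or weaker than, those of $D(\ell,\lambda)$'', and your final paragraph proposes to verify ``that the stronger level--$\ell$ relations \dots imply the corresponding level--$(\ell+1)$ relation on $w_{\lambda,\ell}$''. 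Both are reversed: the level--$\ell$ relations are the weaker ones, and verifying the level--$(\ell+1)$ relations on $w_{\lambda,\ell}$ is checking well--definedness of a map $D(\ell+1,\lambda)\to D(\ell,\lambda)$.

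That reversed check genuinely fails, so carrying out your final paragraph as written would lead nowhere. If it succeeded, the two cyclic modules would admit mutually surjective generator--to--generator maps, hence $D(\ell,\lambda)\cong D(\ell+1,\lambda)$ for every $\ell$; iterating and using \eqref{demev} would give $D(1,\lambda)\cong \ev_0^*V(\lambda)$. Already for $\lie g=\lie{sl}_2$ and $\lambda(h_\alpha)=3$ this is false: $D(1,\lambda)$ is the local Weyl module, of dimension $2^3=8>4=\dim V(\lambda)$ (see \cite{CL}, \cite{CPweyl}). Concretely, here $s_{\alpha,1}=3$ while $s_{\alpha,2}=2$, $m_{\alpha,2}=1$, so the level--$2$ relation $(x^-_\alpha\otimes t)^2w=0$ holds on $w_{\lambda,2}$ but not on $w_{\lambda,1}$. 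The correct verification is the one the paper performs: writing $\lambda(h_\alpha)=d_\alpha\ell(s_{\alpha,\ell}-1)+m_{\alpha,\ell}=d_\alpha(\ell+1)(s_{\alpha,\ell+1}-1)+m_{\alpha,\ell+1}$, uniqueness of quotient and remainder forces either $s_{\alpha,\ell}=s_{\alpha,\ell+1}$ with $m_{\alpha,\ell}=m_{\alpha,\ell+1}+d_\alpha(s_{\alpha,\ell+1}-1)\ge m_{\alpha,\ell+1}$, or $s_{\alpha,\ell}>s_{\alpha,\ell+1}$; in either case the relations \eqref{demquot1}--\eqref{demquot2} at level $\ell+1$, together with \eqref{locweyl}, annihilate the level--$\ell$ relation elements applied to $w_{\lambda,\ell+1}$ (one gets either a higher power of $t$ on $x^-_\alpha$, or a higher power of an element that already kills the vector). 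With the direction corrected, your casework and the rest of your argument go through and coincide with the paper's proof.
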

\begin{pf}  It is clear that any element $\varphi\in \Hom_{\lie g[t]}(D(\ell,\lambda), D(\ell+1,\lambda))$ must send $w_{\lambda,\ell}$ to a scalar multiple of $w_{\lambda,\ell+1}$ and hence the space of homomorphisms is at most one--dimensional. To prove that it is exactly one we must show that $w_{\lambda,\ell+1}$ satisfies the relations of $w_{\lambda,\ell}$. Write $$\lambda(h_\alpha)=d_\alpha\ell (s_{\alpha,\ell}-1) +m_{\alpha,\ell}=d_\alpha(\ell+1)(s_{\alpha,\ell+1}-1)+m_{\alpha,\ell+1},$$ with $0<m_{\alpha,\ell}\le d_\alpha\ell$ and $0<m_{\alpha,\ell+1}\le d_\alpha(\ell+1)$ and using the uniqueness of $s_{\alpha,\ell}$ and $m_{\alpha,\ell}$, we get that either $$s_{\alpha,\ell}=s_{\alpha,\ell+1},\ \ m_{\alpha,\ell}=m_{\alpha,\ell+1}+ d_\alpha(s_{\alpha,\ell+1}-1)\ge m_{\alpha,\ell+1},$$ or $s_{\alpha,\ell}>s_{\alpha,\ell+1}$. In either case the assertion follows.
\end{pf}

\subsection{}The following result which is a combination of  \cite[Section 2.3, Corollary 1]{FoL}, \cite[Proposition 3.6]{Naoi} and \cite[Theorem 2]{CV}  explains the connection with Demazure modules.
\begin{prop}\label{altdem} Let $(\ell,\lambda)\in\bn\times P^+$ and suppose that  $w\in \widehat W$, $\sigma\in\Sigma$, $\Lambda\in\hat{P}^+$ are such that $$w\sigma\Lambda= w_0\lambda+\ell\Lambda_0.$$ Then we have an isomorphism $$D(\ell,\lambda)\cong V_w(\sigma\Lambda), $$ of $\lie g[t]$--modules and hence, for all $\mu\in P$, we have  \begin{equation}\label{gradediso}\dim D(\ell,\lambda)_\mu =\sum_{s\in\bz_{\geq0}}\dim V_w(\sigma\Lambda)_{\ell\Lambda_0+\mu+s\delta}.\end{equation}
\hfill\qedsymbol
\end{prop}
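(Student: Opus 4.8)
The plan is to turn the abstract Demazure module $V_w(\sigma\Lambda)$ into a module with an explicit cyclic generator satisfying the relations \eqref{locweyl}--\eqref{demquot2}, thereby producing a surjection from $D(\ell,\lambda)$, and then to match dimensions; the graded character identity \eqref{gradediso} will follow by comparing the $\lie g[t]$--grading with the $\delta$--grading. First I would observe that $V_w(\sigma\Lambda)$ is a finite--dimensional $\lie g[t]$--module: since $w\sigma\Lambda|_{\lie h}=w_0\lambda\in -P^+$, the module is $\lie g$--stable as recalled in the preliminaries, and equals $\bu(\lie g[t])v$ where $v$ spans the extremal line $V(\Lambda)_{w\sigma\Lambda}$. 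As a vector for the finite $\lie g$, $v$ is a lowest weight vector of weight $w_0\lambda$, so $\bu(\lie n^+)v$ contains a $\lie g$--highest weight vector $\ti w_\lambda$ of weight $\lambda$ lying in the same $d$--eigenspace as $v$. Since $\bu(\lie g)v\ni \ti w_\lambda$ and $\bu(\lie g)\ti w_\lambda\ni v$, the vector $\ti w_\lambda$ generates $V_w(\sigma\Lambda)$ as a $\lie g[t]$--module.

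The technical heart is to verify that $\ti w_\lambda$ satisfies the defining relations of $D(\ell,\lambda)$, giving a surjection $D(\ell,\lambda)\twoheadrightarrow V_w(\sigma\Lambda)$, $w_\lambda\mapsto \ti w_\lambda$. The relations in \eqref{locweyl} are immediate: $\ti w_\lambda$ is a $\lie g$--highest weight vector of weight $\lambda$, and the vanishing of $\lie n^+[t]$ on it together with the integrability conditions follow from the highest--weight, integrable structure of $V(\Lambda)$ restricted through $\ev_0$. The essential relations \eqref{demquot1} and \eqref{demquot2} are the Demazure vanishing relations: for $\alpha\in R^+$ the affine root vectors $x^-_\alpha\otimes t^r$ annihilate $\ti w_\lambda$ once $r$ exceeds the bound dictated by the $\lie{sl}_2$--string in the affine direction, and reading this off against $\lambda+\ell\Lambda_0$ produces exactly the exponents $s_\alpha,m_\alpha$ fixed by $\lambda(h_\alpha)=d_\alpha\ell(s_\alpha-1)+m_\alpha$. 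This is the input supplied by \cite[Theorem 2]{CV}, with the non--simply--laced bookkeeping of the $d_\alpha$ handled by \cite[Proposition 3.6]{Naoi} and the identification of the relevant extremal weight and level by \cite[Section 2.3, Corollary 1]{FoL}.

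To upgrade the surjection to an isomorphism I would prove the reverse inequality $\dim D(\ell,\lambda)\le \dim V_w(\sigma\Lambda)$. The cleanest route is the presentation theorem itself: \cite[Theorem 2]{CV} shows that relations \eqref{locweyl}--\eqref{demquot2} already force the graded dimension of any such module down to that of the Demazure module (computable, say, from the Demazure character formula), so the surjection above must be an isomorphism of $\lie g[t]$--modules. I expect this bound to be the main obstacle, since it is the one place where a naive Poincar\'e--Birkhoff--Witt count is insufficient and the full strength of the cited presentation results is genuinely needed.

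Finally, \eqref{gradediso} follows by tracking gradings. The generator $\ti w_\lambda$ has affine weight $\lambda+\ell\Lambda_0$ and grade $0$, and applying a grade--$s$ element $x\otimes t^s$ raises the affine weight by a root of $\delta$--component $+s$; hence the grade--$s$, $\lie g$--weight--$\mu$ piece of $D(\ell,\lambda)$ is identified with the weight space $V_w(\sigma\Lambda)_{\ell\Lambda_0+\mu+s\delta}$. Summing over $s\in\bz_{\ge 0}$ then yields the stated formula \eqref{gradediso}.
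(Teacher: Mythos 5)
Your proposal is correct and takes essentially the same route as the paper: the paper offers no independent proof of this proposition at all, presenting it as a direct combination of \cite[Section 2.3, Corollary 1]{FoL}, \cite[Proposition 3.6]{Naoi} and \cite[Theorem 2]{CV} --- precisely the results to which your sketch defers both the Demazure relations \eqref{demquot1}--\eqref{demquot2} and the dimension bound that upgrades your surjection to an isomorphism. The surrounding reductions you supply (producing the $\lie g$--highest weight generator from the extremal vector, the easy relations \eqref{locweyl}, and the identification of the $t$--grading with the $\delta$--grading giving \eqref{gradediso}) are routine and consistent with what those references establish.
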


\subsection{}\label{mainthmsection}  The main result of this paper is the following theorem.
\begin{thm}\label{mainthm} Assume that $\lie g$ is of classical type or of type $G_2$.  Let $\lambda\in P^+$ and $k,\ell\in\bn$ and write $$\lambda=\ell\left(\sum_{s=1}^k\lambda^s\right)+\lambda^0,\ \ \ \ \lambda^s\in L^+,\ \ 1\le s \le k,\ \ \lambda^0\in P^+. $$  We have an isomorphism of graded $\lie g[t]$--modules, $$D(\ell,\lambda)\cong D(\ell,\lambda^0)^{z_0}*D(\ell,\ell\lambda^1)^{z_1}*\cdots *D(\ell,\ell\lambda^k)^{z_k},$$ where $z_0,\cdots, z_k$ are distinct complex numbers.  In particular, the  fusion product on the right hand side   is independent of the choice of parameters.
\end{thm}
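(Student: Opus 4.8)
The plan is to realize both sides as finite-dimensional graded $\lie g[t]$-modules, produce a surjection from $D(\ell,\lambda)$ onto the fusion product, and then show the two modules have the same dimension, which forces the surjection to be an isomorphism. By associativity of the fusion product and induction on $k$ it suffices to treat the two-factor case, namely to prove $D(\ell,\ell\mu+\nu)\cong D(\ell,\ell\mu)*D(\ell,\nu)$ for $\mu\in L^+$ and $\nu\in P^+$; applying this with $\mu=\lambda^k$ and $\nu=\ell\sum_{s<k}\lambda^s+\lambda^0$ peels off one co-weight factor, and the induction hypothesis then handles the remaining product, producing the distinct parameters $z_0,\dots,z_k$.

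For the surjection, write $P=D(\ell,\ell\mu)*D(\ell,\nu)$; this is a cyclic graded module generated by the image $w^*$ of the tensor product of the two highest weight vectors, which has weight $\lambda=\ell\mu+\nu$ and grade $0$. First I would check that $w^*$ satisfies the relations \eqref{locweyl}: the vanishing $\lie n^+[t]w^*=0$ and the $\lie h$-action are immediate from the corresponding relations for each factor together with the additivity of weights, while the $\lie{sl}_2$-relation $(x^-_{\alpha_i})^{\lambda(h_i)+1}w^*=0$ follows from the analogous relations on the two tensor factors. The substantive point is to verify the Demazure relations \eqref{demquot1} and \eqref{demquot2} for the exponents $s_\alpha,m_\alpha$ attached to $\lambda$; here I would compute $s_\alpha,m_\alpha$ from the corresponding data of $\ell\mu$ and $\nu$, expand $(x^-_\alpha\otimes t^{s_\alpha})w^*$ and $(x^-_\alpha\otimes t^{s_\alpha-1})^{m_\alpha+1}w^*$ via the coproduct on the tensor product, and use the grading of the fusion product together with the factor relations to see that the top graded component vanishes. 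This yields a surjection $D(\ell,\lambda)\twoheadrightarrow P$ of graded modules with $w_\lambda\mapsto w^*$, and in particular $\dim D(\ell,\lambda)\ge\dim P=\dim D(\ell,\ell\mu)\cdot\dim D(\ell,\nu)$.

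It then remains to prove the reverse inequality, equivalently the character identity $\ch_{\lie h}D(\ell,\lambda)=\ch_{\lie h}D(\ell,\ell\mu)\cdot\ch_{\lie h}D(\ell,\nu)$, since the underlying $\lie g$-module of a fusion product is the tensor product of the factors. For this I would invoke \propref{altdem} to realize each module as an affine Demazure module $V_w(\sigma\Lambda)$ and pass to its Demazure character, working in $\bz[\hat P]/I_\delta$ so that $e(\delta)$ is set to $1$ as dictated by \eqref{gradediso}. Because $\mu\in L^+$, the weight $\ell\mu$ corresponds to the translation $t_{-\mu}\in\widetilde W$, so the affine element attached to $\lambda$ factors as a translation part times the element attached to $\nu$; \propref{add}(ii) supplies exactly the length-additivity $\ell(t_{-\mu}w')=\ell(t_{-\mu})+\ell(w')$ needed to split the Demazure operator as $\mathcal D_{t_{-\mu}w'}=\mathcal D_{t_{-\mu}}\mathcal D_{w'}$. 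The crux is to upgrade this operator factorization to a genuine \emph{product} of the two characters, and this is where the technical proposition on the affine Weyl group enters, guaranteeing that the intermediate character already carries enough invariance for the translation operator $\mathcal D_{t_{-\mu}}$ to pull the factor $\ch_{\lie h}D(\ell,\ell\mu)$ out as a multiplier.

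I expect this final step to be the main obstacle, both because turning a Demazure-operator factorization into an honest product of characters is delicate, and because the required Weyl-combinatorial input is precisely what fails outside the classical types and $G_2$ (being only verified computationally for small $\ell$ in $E_8$ and $F_4$). Once the character identity is established, the surjection $D(\ell,\lambda)\twoheadrightarrow P$ between finite-dimensional spaces of equal dimension must be an isomorphism; tracking the grades shows it is an isomorphism of graded $\lie g[t]$-modules, and the iteration over $s=1,\dots,k$ gives the stated decomposition together with the independence of the fusion product from the choice of the parameters $z_i$.
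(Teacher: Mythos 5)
Your overall strategy---a surjection from $D(\ell,\lambda)$ onto the fusion product obtained by checking the defining relations on the cyclic vector, combined with a Demazure-operator character identity that forces equality of dimensions---is exactly the paper's strategy (Propositions \ref{quotient} and \ref{demdimgen}). But your opening reduction contains a genuine gap: you invoke ``associativity of the fusion product and induction on $k$'' to reduce to the two-factor case, and fusion products are not known to be associative. The module $A*(B_1*\cdots*B_{k-1})$ is $\gr\bigl(A^{z}\otimes(B_1*\cdots*B_{k-1})^{z'}\bigr)$, where the second tensor factor is itself an associated graded module, and there is no natural map comparing this with $\gr\bigl(A^{z_0}\otimes B_1^{z_1}\otimes\cdots\otimes B_{k-1}^{z_{k-1}}\bigr)$; indeed, parameter-independence and associativity for these modules are \emph{consequences} of the theorem being proved, so assuming them is circular. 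As written, iterating your two-factor isomorphism only produces a nested product $D(\ell,\ell\lambda^k)*\bigl(D(\ell,\lambda^0)*\cdots*D(\ell,\ell\lambda^{k-1})\bigr)$, which is not the $(k+1)$-factor fusion product asserted in the theorem. The repair is what the paper does: run your relation-checking computation (the polynomial trick with $\prod_j(t-z_j)^{s^j_\alpha}$) directly on the generator of the \emph{full} multi-factor fusion product, which works verbatim for any number of factors (\propref{quotient}), and obtain the dimension equality by iterating the two-factor character identity purely at the level of $\lie g$-characters---where tensor products are honestly associative---together with the product formula $\dim D(\ell,\ell\mu)=\prod_s\dim D(\ell,\ell\lambda^s)$ from \cite{FoL}.

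There is also a misrouting in your character argument. The invariance that lets a character be pulled out of a Demazure operator as a multiplier is not supplied by \propref{key}; it is the ordinary $W$-invariance of the character of a finite-dimensional $\lie g$-module (\lemref{winv}, quoted from \cite{FoL1}), and what gets pulled out is the character of the \emph{remainder} $\ch_{\lie h}D(\ell,\lambda_2)$, not $\ch_{\lie h}D(\ell,\ell\mu)$: one has $D_{t_{w_0\lambda_1}}\bigl(e(\ell\Lambda_0)\ch_{\lie h}D(\ell,\lambda_2)\bigr)\equiv D_{t_{w_0\lambda_1}}\bigl(e(\ell\Lambda_0)\bigr)\ch_{\lie h}D(\ell,\lambda_2)$, and the coweight factor $\ch_{\lie h}D(\ell,\ell\lambda_1)$ then \emph{emerges} from $D_{t_{w_0\lambda_1}}\bigl(e(\ell\Lambda_0)\bigr)\equiv e(\ell\Lambda_0)\ch_{\lie h}D(\ell,\ell\lambda_1)$. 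The actual role of \propref{key} is earlier: for the small remainder $\lambda_2$ (with $\lambda_2(h_i)\le d_i\ell$) it produces $\eta\in L^+$ and $w\in W$ with $\Lambda=w^{-1}t_\eta(\ell\Lambda_0+w_0\lambda_2)\in\hat P^+$, i.e.\ it realizes $D(\ell,\lambda_2)$ as an affine Demazure module $V_{t_{-\eta}w}(\Lambda)$ with $\Lambda$ dominant and with conjugating element of the precise shape---dominant translation times finite Weyl group element---needed for the length-additivity of \propref{add}(ii), and hence for the operator factorization $D_{t_{w_0\lambda_1}t_{-\eta}w}=D_{t_{w_0\lambda_1}}D_{t_{-\eta}w}$, to apply. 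With that rewiring, and with the reduction repaired as above, your outline coincides with the paper's proof (Lemma \ref{keyuse} and the deduction of \propref{demdimgen}).
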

\subsection{}   In the case when $\lambda^0=0$ the result was first proved in \cite{FoL} and a different proof was given in \cite{CV}. As in these papers, the proof of our theorem uses the theory of Demazure operators and  the following   additional key   result
 proved in Section \ref{keyproof}.

\begin{prop}\label{key} Assume that $\lie g $ is of classical type or of type $G_2$.  Let $\lambda\in P^+$ and $\ell\in\bn$  be such that $\lambda(h_i)\le d_i\ell$ for all $1\le i\le n$.  There exists $\mu\in L^+$ and $w\in W$ such that $wt_{\mu}(\ell\Lambda_0+w_0\lambda)\in\hat P^+$.\end{prop}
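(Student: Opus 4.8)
The plan is to translate the membership $wt_\mu(\ell\Lambda_0+w_0\lambda)\in\hat P^+$ into an explicit system of inequalities on $\mu$, and then to produce $\mu$ by hand in each case. First I would record that every $w\in W$ fixes both $\Lambda_0$ and $\delta$, so that the formulae \eqref{translation} give
$$wt_\mu(\ell\Lambda_0+w_0\lambda)=\ell\Lambda_0+w(\ell\mu+w_0\lambda)-\Big(\tfrac{\ell}{2}(\mu,\mu)+(w_0\lambda,\mu)\Big)\delta.$$
Since $\widetilde W$ preserves $\hat P$, the coefficient of $\delta$ is automatically an integer and is irrelevant to dominance, so this weight lies in $\hat P^+$ precisely when its finite part $\xi:=w(\ell\mu+w_0\lambda)$ is dominant for $\lie g$ (the conditions at the simple roots $\alpha_1,\dots,\alpha_n$) and, at the affine node, $(\xi,\theta)\le\ell$ (using $\alpha_0=\delta-\theta$ and $(\theta,\theta)=2$).

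The two conditions decouple. The first is met simply by choosing $w$ to be any element carrying $\ell\mu+w_0\lambda$ into $P^+$, which is always possible. For the second I would use that for a dominant weight the pairing with the highest root is the maximum of the pairings over its $W$-orbit, so that $(\xi,\theta)=\max_{\beta\in W\theta}(\ell\mu+w_0\lambda,\beta)$ is independent of the choice of $w$. As $W\theta$ is the set of long roots and is stable under $\beta\mapsto-\beta$, the requirement $(\xi,\theta)\le\ell$ is equivalent to
$$\big|(\mu-\tfrac1\ell\lambda^*,\gamma)\big|\le 1\qquad\text{for every long root }\gamma,$$
where $\lambda^*:=-w_0\lambda\in P^+$. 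Thus the whole proposition reduces to finding a dominant coweight $\mu\in L^+$ lying within the level-one box $\{x:|(x,\gamma)|\le1,\ \gamma\in W\theta\}$ recentred at the small dominant point $\tfrac1\ell\lambda^*$; note $(\tfrac1\ell\lambda^*,\alpha_i)=\lambda^*(h_i)/(\ell d_i)\le1$ by hypothesis.

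To construct $\mu$, I would write $\mu=\sum_i b_i d_i\omega_i$, so that $(\mu,\alpha_i)=b_i$, and seek integers $b_i\ge0$ with $(\mu,\gamma)$ close to $c_\gamma:=(\lambda^*,\gamma)/\ell$ for all long $\gamma$. For a simple root $\gamma=\alpha_i$ the target $c_{\alpha_i}\in[0,1]$ always admits $b_i\in\{0,1\}$, and the extreme cases are instructive: if $\lambda\in\ell L^+$ one takes $\mu=-w_0(\lambda/\ell)$, giving $\xi=0$, while if $\lambda$ is small relative to $\ell$ one takes $\mu=0$. In general a natural first choice is to let each $b_i$ be the nearest integer to $c_{\alpha_i}$, and then to verify the remaining inequalities on the long roots $\gamma=\sum_i n_i\alpha_i$, namely $\big|\sum_i n_i(b_i-c_{\alpha_i})\big|\le1$, adjusting the $b_i$ if needed. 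In the classical types this is done by passing to the standard $\varepsilon$-coordinates, where the long roots are differences or sums of at most two coordinates and the inequalities become coordinatewise roundings; $G_2$ is settled by a finite direct check.

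The main obstacle is precisely this last verification: one needs the signed sums $\sum_i n_i(b_i-c_{\alpha_i})$ to stay bounded by $1$ even for long roots $\gamma$ of large height, which demands genuine cancellation among the errors $b_i-c_{\alpha_i}$ rather than merely making each error small. In the classical cases and $G_2$ the long roots are supported on few simple roots, or have a transparent coordinate description, so the cancellation can be controlled explicitly; for $F_4$ and $E_8$, where $\theta$ has large height and the long roots admit no such simple description, no uniform choice of $\mu$ presents itself, and this is the source of the restriction in the hypothesis.
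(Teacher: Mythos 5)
Your reduction of the proposition is correct and is essentially the paper's own first step: expanding $wt_\mu$ via \eqref{translation} and using that a dominant weight $\xi$ satisfies $(\xi,\theta)=\max_{\beta\in W\theta}(\xi,\beta)$, the statement becomes the existence of $\mu\in L^+$ with $|(\ell\mu-\lambda^*,\gamma)|\le\ell$ for all long roots $\gamma$. This is exactly Lemma~\ref{key2} of the paper (stated there for all of $R^+$, but immediately reduced to long roots by the same observation that the short-root inequalities follow from the long-root ones).

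The genuine gap is that you never prove this existence statement, and it is the entire content of the proposition --- it is precisely the step that fails for $E_8$ and $F_4$ and forces the hypothesis on $\lie g$. Your first recipe (take $b_i$ to be the nearest integer to $c_{\alpha_i}$, then ``adjust if needed'') actually fails: already for $\gamma=\theta$ in type $A_n$ the error $\sum_i n_i(b_i-c_{\alpha_i})$ can be of order $n/2$, and no adjustment rule is specified. Your fallback --- pass to $\varepsilon$-coordinates and round coordinatewise --- is a sound idea that can be completed in the classical types, but the completion is where all the work lies and none of it appears: one must (i) describe $L^+$ in $\varepsilon$-coordinates, where there are parity constraints (in $C_n$ the coweight lattice consists of integer vectors with all coordinates of equal parity, so one must round to even integers, with error bound $1$ rather than $1/2$, which is exactly what the single-coordinate inequalities $|(\mu-\tfrac1\ell\lambda^*,2\varepsilon_i)|\le 1$ tolerate); (ii) verify the rounding is weakly monotone and sign-compatible so that $\mu$ is dominant (in $D_n$ this includes $\mu_{n-1}\ge|\mu_n|$); and (iii) verify the pairwise bounds $|e_i\pm e_j|\le 1$, which force error $\le 1/2$ per coordinate in types $B$ and $D$ but permit error $\le 1$ in type $C$ --- so the rounding rule itself must vary with the type. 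Similarly, ``a finite direct check'' for $G_2$ is not a check: the data $\bigl((\lambda,\alpha_1)/\ell,(\lambda,\alpha_2)/\ell\bigr)$ ranges over a square, and one must produce a finite list of candidates $\mu$ together with a covering of that square by regions on which each candidate works (the paper's list is $0,\ \omega_1,\ 3\omega_2,\ \omega_1+3\omega_2$, with explicit inequalities delimiting the regions). For comparison, the paper carries out the construction by an induction on rank in type $C_n$ (adding $2\omega_1$ or not, according to the value on $\theta$), then deduces types $A$ and $D$ by realizing them inside $C_n$ (as a diagram subalgebra, respectively as the set of short roots) and type $B_n$ by folding $D_{n+1}$. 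Your coordinatewise scheme would give an alternative, arguably more uniform, organization of that case analysis, but as written the proposal stops exactly where the proof has to begin.
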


\begin{rem}  The restriction  on $\lie g$  in the main theorem is purely a consequence of the fact that we are able to prove Proposition \ref{key} only in the case when $\lie g$ is of classical type or of type $G_2$. Computer calculations for small values of $\ell$ show that the proposition is true for such $\ell$ for the other exceptional Lie algebras as well.  However a  proof for arbitrary $\ell$ seems difficult for $E_8$ and $F_4$.
\end{rem}

\subsection{} For the rest of the section,  we   discuss  applications of our result.  We begin by noting the following corollary of our theorem.
\begin{prop} \label{sp} Let $\ell\in\bn$, $\lambda_1\in L^+$, and $\lambda_2\in P^+$. There exists a canonical surjective map of $\lie g[t]$--modules $$D(\ell,\ell\lambda_1)*D(\ell, \lambda_2)\to D(\ell+1,(\ell+1)\mu_1)*D(\ell+1,\mu_2)\to 0$$ for all $\mu_1\in L^+$, $\mu_2\in P^+$ with $(\ell+1)\mu_1+\mu_2=\ell\lambda_1+\lambda_2$.
\end{prop}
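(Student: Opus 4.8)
Let $\ell\in\bn$, $\lambda_1\in L^+$, $\lambda_2\in P^+$. For any $\mu_1\in L^+$, $\mu_2\in P^+$ with $(\ell+1)\mu_1+\mu_2=\ell\lambda_1+\lambda_2$, there is a canonical surjection
$$D(\ell,\ell\lambda_1)*D(\ell,\lambda_2)\twoheadrightarrow D(\ell+1,(\ell+1)\mu_1)*D(\ell+1,\mu_2).$$

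Let me understand what needs to happen.

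\textbf{Setup and plan.} The plan is to recognize that the two fusion products appearing in the statement are nothing but the Steinberg decompositions, at the two consecutive levels $\ell$ and $\ell+1$, of one and the same weight. So I would set $\lambda=\ell\lambda_1+\lambda_2$ and note that the hypothesis $(\ell+1)\mu_1+\mu_2=\ell\lambda_1+\lambda_2$ says precisely that $\lambda=(\ell+1)\mu_1+\mu_2$ as well. I would then realize the left-hand side as $D(\ell,\lambda)$ and the right-hand side as $D(\ell+1,\lambda)$ by means of \thmref{mainthm}, and connect the two by the level-raising map of \lemref{difflevel}.

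\textbf{Applying the main theorem at both levels.} First I would apply \thmref{mainthm} with $k=1$, $\lambda^1=\lambda_1\in L^+$ and $\lambda^0=\lambda_2\in P^+$, which yields a canonical isomorphism
$$D(\ell,\lambda)\;\cong\;D(\ell,\lambda_2)*D(\ell,\ell\lambda_1)\;\cong\;D(\ell,\ell\lambda_1)*D(\ell,\lambda_2),$$
the second isomorphism being the reordering of fusion factors, which is harmless because \thmref{mainthm} asserts the fusion product is independent of the chosen parameters. Next I would invoke \thmref{mainthm} again, this time at level $\ell+1$, with $k=1$, $\lambda^1=\mu_1\in L^+$ and $\lambda^0=\mu_2\in P^+$, to obtain
$$D(\ell+1,\lambda)\;\cong\;D(\ell+1,(\ell+1)\mu_1)*D(\ell+1,\mu_2).$$
Both invocations are legitimate precisely because they refer to the common weight $\lambda=\ell\lambda_1+\lambda_2=(\ell+1)\mu_1+\mu_2$.

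\textbf{Splicing in the level-raising map.} It then remains to produce a canonical surjection $D(\ell,\lambda)\to D(\ell+1,\lambda)$ and compose. This is delivered directly by \lemref{difflevel}, which gives $\Hom_{\lie g[t]}(D(\ell,\lambda),D(\ell+1,\lambda))=\bc$ with every nonzero homomorphism surjective; the nonzero map sends the generator $w_{\lambda,\ell}$ to $w_{\lambda,\ell+1}$. Composing the two isomorphisms of the previous paragraph with this surjection would give the asserted map
$$D(\ell,\ell\lambda_1)*D(\ell,\lambda_2)\to D(\ell+1,(\ell+1)\mu_1)*D(\ell+1,\mu_2)\to 0,$$
and surjectivity is immediate since the middle arrow is surjective and the two flanking arrows are isomorphisms.

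\textbf{Expected obstacle.} The only point that needs genuine care—and the step I expect to be the main (though minor) obstacle—is the bookkeeping of cyclic generators and graded shifts, so that the composite is canonical rather than merely existent. I would check that each isomorphism coming from \thmref{mainthm} carries the cyclic vector of the fusion product to the Demazure generator $w_{\lambda,\ell}$ (respectively $w_{\lambda,\ell+1}$), and that the map of \lemref{difflevel} matches these two generators; this ensures no spurious grading shift is introduced and that the resulting map is natural up to the unavoidable overall scalar. Independence of the fusion parameters on both ends is not an extra burden here, as it is already part of the conclusion of \thmref{mainthm}.
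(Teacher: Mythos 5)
Your proposal is correct and follows essentially the same route as the paper: the paper likewise observes that, via Theorem~\ref{mainthm}, both fusion products are identified with $D(\ell,\lambda)$ and $D(\ell+1,\lambda)$ for the common weight $\lambda=\ell\lambda_1+\lambda_2=(\ell+1)\mu_1+\mu_2$, and then invokes Lemma~\ref{difflevel} to produce the nonzero (hence surjective) map between them. Your additional remarks on reordering fusion factors and matching cyclic generators are fine but not needed beyond what the paper records.
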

\begin{pf}   By Theorem \ref{mainthm} we see that the proposition amounts to proving that $$\Hom_{\lie g[t]}(D(\ell,\ell\lambda_1+\lambda_2), D(\ell+1,\ell\lambda_1+\lambda_2))\ne 0.$$But this is precisely the statement of Lemma \ref{difflevel}.
\end{pf}
\begin{cor}\label{schurpos} Let  $1\le i\le n$ be  such that $\omega_i(h_\alpha)\le 1$ for all $\alpha\in R^+$.
For all $\mu,\nu\in P^+$ and $\ell\in\bn$ such that  $\ell-d_i\ge \max\{\mu(h_\alpha):\alpha\in R^+\}$ we have, $$\dim\Hom_{\lie g}(V(\nu), V(d_i(\ell+1)\omega_i)\otimes V(\mu))\le\dim\Hom_{\lie g}(V(\nu), V(
d_i\ell\omega_i)\otimes V(\mu+d_i\omega_i)).$$\end{cor}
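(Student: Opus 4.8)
The plan is to read this off from the canonical surjection of \propref{sp} by passing to the underlying $\lie g$--module structure and invoking complete reducibility. First I would specialize \propref{sp} to the weights at hand: take $\lambda_1=\mu_1=d_i\omega_i\in L^+$, $\lambda_2=\mu+d_i\omega_i\in P^+$ and $\mu_2=\mu\in P^+$. Since
$$(\ell+1)\mu_1+\mu_2=d_i(\ell+1)\omega_i+\mu=d_i\ell\omega_i+(\mu+d_i\omega_i)=\ell\lambda_1+\lambda_2,$$
the hypothesis of \propref{sp} holds, and it produces a surjection of $\lie g[t]$--modules
$$D(\ell,d_i\ell\omega_i)*D(\ell,\mu+d_i\omega_i)\twoheadrightarrow D(\ell+1,d_i(\ell+1)\omega_i)*D(\ell+1,\mu).$$

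Next I would restrict the action to $\lie g$ (the constant maps). As recalled in the introduction, the underlying $\lie g$--module of a fusion product is the tensor product of the underlying $\lie g$--modules of its factors, so the map becomes a surjection of finite--dimensional $\lie g$--modules
$$D(\ell,d_i\ell\omega_i)\otimes D(\ell,\mu+d_i\omega_i)\twoheadrightarrow D(\ell+1,d_i(\ell+1)\omega_i)\otimes D(\ell+1,\mu).$$
Because finite--dimensional $\lie g$--modules are semisimple, a surjection $A\twoheadrightarrow B$ realizes $B$ as a direct summand of $A$, whence $\dim\Hom_{\lie g}(V(\nu),B)\le \dim\Hom_{\lie g}(V(\nu),A)$ for every $\nu\in P^+$. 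Applying this to the displayed map is the desired inequality, once the four Demazure factors are identified with the corresponding irreducibles.

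That identification is where \eqref{demev} enters. For the two ``small'' factors the numerical hypotheses are tailored for exactly this: using $\omega_i(h_\alpha)\le 1$ and $\mu(h_\alpha)\le \ell-d_i$ one gets $(\mu+d_i\omega_i)(h_\alpha)\le \ell\le d_\alpha\ell$ and $\mu(h_\alpha)\le \ell-d_i\le d_\alpha(\ell+1)$ for all $\alpha\in R^+$, so that $D(\ell,\mu+d_i\omega_i)\cong \ev_0^*V(\mu+d_i\omega_i)$ and $D(\ell+1,\mu)\cong\ev_0^*V(\mu)$ as $\lie g$--modules. For the two minuscule factors one needs $D(\ell,d_i\ell\omega_i)\cong\ev_0^*V(d_i\ell\omega_i)$ and $D(\ell+1,d_i(\ell+1)\omega_i)\cong \ev_0^*V(d_i(\ell+1)\omega_i)$, which by \eqref{demev} both reduce to the single condition $d_i\omega_i(h_\alpha)\le d_\alpha$ for all $\alpha\in R^+$. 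Granting these, the surjection becomes $V(d_i\ell\omega_i)\otimes V(\mu+d_i\omega_i)\twoheadrightarrow V(d_i(\ell+1)\omega_i)\otimes V(\mu)$ and the semisimplicity argument delivers the corollary.

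The step I expect to be the main obstacle is precisely this last identification $d_i\omega_i(h_\alpha)\le d_\alpha$. As $\omega_i(h_\alpha)\in\{0,1\}$ by minusculeness, it is automatic once $\omega_i(h_\alpha)=1$ forces $d_i\le d_\alpha$; in the simply--laced types every $d_\alpha=d_i=1$, so it holds trivially and the argument above is complete. In the non--simply--laced cases with $d_i>1$ (the spin node of $B_n$ and the vector node of $C_n$) the condition can fail, so that $D(\ell,d_i\ell\omega_i)$ is strictly larger than $\ev_0^*V(d_i\ell\omega_i)$ and the higher graded pieces of the minuscule factor cannot simply be discarded; recovering the inequality there would require a finer analysis of the $\lie g$--module grading of these rectangular Demazure modules rather than a direct appeal to \eqref{demev}.
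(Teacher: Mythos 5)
Your proposal follows exactly the paper's route: the same specialization of \propref{sp} (with $\lambda_1=\mu_1=d_i\omega_i$, $\lambda_2=\mu+d_i\omega_i$, $\mu_2=\mu$), restriction of the resulting surjection to $\lie g$, semisimplicity, and identification of the four Demazure factors with irreducible evaluation modules via \eqref{demev}. Your verification for the two non-rectangular factors is the paper's, and in the simply--laced case your argument is complete.

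The obstacle you flag for $d_i>1$ is genuine as far as \eqref{demev} is concerned, and it is worth noting that the paper's own proof silently elides it: the paper checks the inequality $\ell\omega_i(h_\alpha)\le\ell$, which is the hypothesis of \eqref{demev} for $D(\ell,\ell\omega_i)$, not for the module $D(\ell,\ell d_i\omega_i)$ that actually occurs. The nodes at issue are exactly the spin node of $B_n$ and the node $i=1$ of $C_n$ (these are the only nodes in any type with $\omega_i(h_\alpha)\le 1$ for all $\alpha\in R^+$ and $d_i>1$), and there $d_i\omega_i(h_\theta)=2>1=d_\theta$, so \eqref{demev} indeed does not apply. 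However, your concluding diagnosis --- that $D(\ell,\ell d_i\omega_i)$ is then \emph{strictly larger} than $\ev_0^*V(\ell d_i\omega_i)$, leaving the inequality in doubt --- is false, and this is how the proof gets completed rather than abandoned. All the argument requires is the underlying $\lie g$--module structure of the rectangular factors, and by \cite{CM} together with \cite{FoL} (see also \cite{Naoi}), $D(\ell,\ell d_i\omega_i)$ is the classical limit of the Kirillov--Reshetikhin module $W^{(i)}_{\ell d_i}$, which for precisely these two nodes is irreducible as a $\lie g$--module: $V(m\omega_n)$ for the spin node of $B_n$, and $V(m\omega_1)$ for $C_n$, where the symmetric powers of the standard symplectic module are irreducible. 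Hence $D(\ell,\ell d_i\omega_i)\cong_{\lie g}V(\ell d_i\omega_i)$ after all (equivalently it is an evaluation module, since a graded cyclic $\lie g[t]$--module generated in degree zero which is $\lie g$--irreducible must be one); it is only the \emph{proof} via \eqref{demev} that breaks down, not the statement, and with this substitute input your surjection yields the corollary in the remaining cases as well.
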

\begin{pf} We apply the proposition by taking $\lambda_1=d_i\omega_i$ and $\mu+d_i\omega_i =\lambda_2$. The conditions on $i$ and $\mu$ imply that $(\mu+d_i\omega_i)(h_\alpha)
 \le \ell\le d_\alpha\ell$ and $\ell\omega_i(h_\alpha)\le\ell$ for all $\alpha\in R^+$. Equation \eqref{demev} now shows that all the Demazure modules involved in the proposition are actually evaluation modules and the result follows.
\end{pf}
\begin{rem} The preceding  corollary generalizes Theorem 1(ii) of \cite{CFouSag} where the case when $\mu$ is also a multiple of $\omega_i$ was proved by entirely different methods.

\end{rem}

\subsection{} We discuss now  the kernel of the map defined in Proposition \ref{sp} and whether it too, can be described in terms of Demazure modules. This question can be  related to the notion of $Q$--systems  introduced and studied  in \cite{HKOTY} for arbitrary simple Lie algebras and for a pair $(i,m)$ where $i$ is a node of the Dynkin diagram and $m\in\bn$. Analogs of this system exist for the quantum affine algebras. We refer the reader to \cite{HKOTY}, \cite{Her}, \cite{Nak} for further information. In our discussion here, we  restrict ourselves to the simply--laced case and assume that $i$ is such that $\omega_i$ is miniscule.  For  $(i,m)\in I\times \bn$  the $Q$--system is a short exact sequence of $\lie g$--modules $$0\to  \bigotimes_{j:  i\sim j} V(m\omega_j)\to V(m\omega_i)\otimes V(m\omega_i)\to V((m+1)\omega_i)\otimes V((m-1)\omega_i)\to 0, $$ where we say that $i\sim j$ if $i\ne j$ and the nodes $i$ and $j$ are connected in the Dynkin diagram.  For current algebras, it was proved in \cite{CV} that each of the modules in the short exact sequence is  a  Demazure module for $\lie g[t]$ of level $m$. In fact, a stronger statement was established:  that replacing the tensor product of $\lie g$--modules by the fusion product of $\lie g[t]$--modules gives rise to a canonical short exact sequence of $\lie g[t]$--modules.

A natural question to ask is if there is an analog of $Q$--systems associated to an arbitrary pair of dominant integral weights. In \cite{FH}, a start was made on this question where they proved that if $\ell\ge m$, then there exists a surjective map of $\lie g$--modules $$V(\ell\omega_i)\otimes V(m\omega_i) \to V((\ell+1)\omega_i)\otimes V((m-1)\omega_i)\to 0,$$ but their methods do not allow them to  determine the kernel of this map when $\ell>m$. Our next theorem, has the result of \cite{FH} as a special case (by taking $\lambda=m\omega_i$). Moreover,  the short exact sequences of $\lie g[t]$--modules are seen (by taking $\lambda=\ell\omega_i$) to be  generalizations of $Q$--systems.  It also determines the kernel of the map defined in Proposition \ref{sp} when $\lambda_1=\omega_i$.

\begin{thm}\label{genqsystem}
Assume that $\lie g$ is of type $A$ or $D$ and    let $1\le i\le n$ be such that $\omega_i(h_\alpha)\le 1$ for all $\alpha\in R^+$.
     Choose $(\ell, \lambda)\in\bn\times  P^+$  such that $$\lambda(h_i)\ge 1 ,\qquad \ell\ge\max\{\lambda(h_\alpha):\alpha\in R^+\}.$$ Let $\nu= \ell\omega_i+\lambda -\lambda(h_i)\alpha_i$ and write $\nu=\ell\nu^1+\nu^0$ for some $\nu^0\in P^+$, $\nu^1\in L^+$. There exists a canonical short exact sequence of $\lie g[t]$--modules:\begin{gather*} 0\to\tau_{\lambda(h_i)}^*\left( D(\ell, \ell\nu^1)*D(\ell, \nu^0)\right)\to D(\ell, \ell\omega_i)*D(\ell,\lambda)\\ \to D(\ell+1,(\ell+1)\omega_i)*D(\ell+1 ,\lambda-\omega_i)\to 0.\end{gather*}
\end{thm}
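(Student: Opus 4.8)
The plan is to exhibit the desired short exact sequence by identifying its three terms with the appropriate graded pieces of the modules produced by \thmref{mainthm} and then constructing the maps explicitly and checking exactness. First I would use \thmref{mainthm} to rewrite the middle and right-hand terms as genuine Demazure modules. Since $\omega_i$ is miniscule (so $\omega_i(h_\alpha)\le 1$) and $\ell\ge\max\{\lambda(h_\alpha):\alpha\in R^+\}$, the factor $D(\ell,\lambda)$ satisfies the hypothesis of \eqref{demev} and is an evaluation module, while $D(\ell,\ell\omega_i)$ is a prime factor of level $\ell$; their fusion product is, by \thmref{mainthm} applied to the weight $\ell\omega_i+\lambda$, isomorphic to $D(\ell,\ell\omega_i+\lambda)$. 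Likewise the right-hand fusion product is $D(\ell+1,\ell\omega_i+\lambda)$ (here $\lambda-\omega_i$ is still dominant because $\lambda(h_i)\ge 1$, and $(\ell+1)\omega_i+(\lambda-\omega_i)=\ell\omega_i+\lambda$). Thus the asserted sequence is equivalent to a short exact sequence
\begin{equation*}
0\to\tau_{\lambda(h_i)}^*\,D(\ell,\nu)\to D(\ell,\ell\omega_i+\lambda)\to D(\ell+1,\ell\omega_i+\lambda)\to 0,
\end{equation*}
where $\nu=\ell\omega_i+\lambda-\lambda(h_i)\alpha_i$ and again $D(\ell,\nu)\cong D(\ell,\ell\nu^1)*D(\ell,\nu^0)$ by \thmref{mainthm}.

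The surjection on the right is the canonical map of \lemref{difflevel} (its existence and surjectivity require only $\Hom_{\lie g[t]}(D(\ell,\eta),D(\ell+1,\eta))\ne 0$, which that lemma supplies); this is exactly the map of \propref{sp} in the case $\lambda_1=\omega_i$. The substantive work is the kernel. I would first analyze the relations defining $D(\ell,\ell\omega_i+\lambda)$ and $D(\ell+1,\ell\omega_i+\lambda)$ and determine, via the computation of $s_{\alpha,\ell}$ and $m_{\alpha,\ell}$ carried out in \lemref{difflevel}, precisely which extra relation is imposed at level $\ell+1$. Because $\omega_i$ is miniscule and $\ell$ dominates the values of $\lambda$, the only root at which the two levels differ is the long root $\alpha$ with $(\ell\omega_i+\lambda)(h_\alpha)$ lying in the critical window, and for type $A$ and $D$ this forces the extra relation to be driven by $\alpha_i$; the kernel is then generated by a single vector, namely (a graded shift of) the image of $(x^-_{\alpha_i})^{\lambda(h_i)}w_{\ell\omega_i+\lambda}$. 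The grading shift $\tau_{\lambda(h_i)}^*$ is dictated by the degree $s_{\alpha_i}-1$ at which this defining relation sits, accounting for the factor $\lambda(h_i)$. I would then show that this cyclic $\lie g[t]$--submodule is a quotient of $D(\ell,\nu)$ by checking that its generator satisfies the defining relations \eqref{locweyl}--\eqref{demquot2} for the weight $\nu$; the weight $\nu$ is manufactured precisely as $\ell\omega_i+\lambda$ minus $\lambda(h_i)$ copies of $\alpha_i$ so that this generator has the correct $\lie h$--weight and the raising relations hold.

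The two remaining points are a dimension (character) count and the reverse surjectivity. For the count I would compare graded characters: using \eqref{gradediso} and \propref{altdem} to pass to the affine Demazure character formula, or alternatively using the fusion-product character identity $\ch D(\ell,\ell\omega_i+\lambda)=\ch D(\ell,\ell\omega_i)\cdot\ch D(\ell,\lambda)$ together with the analogous identity at level $\ell+1$, I would verify the Euler characteristic relation
\begin{equation*}
\ch_{\lie g}D(\ell,\ell\omega_i+\lambda)=\ch_{\lie g}D(\ell+1,\ell\omega_i+\lambda)+\ch_{\lie g}D(\ell,\nu),
\end{equation*}
which pins down the kernel dimension and forces the inclusion of $D(\ell,\nu)$ to be an isomorphism onto the kernel. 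The main obstacle I anticipate is this character identity: establishing it rigorously in types $A$ and $D$ will require the miniscule hypothesis on $\omega_i$ in an essential way (it is what makes $D(\ell,\lambda)$ an evaluation module and restricts the level-changing relation to a single root), and it parallels the $Q$--system computation of \cite{CV} but now with one factor non-rectangular and of mixed level, so the combinatorics of the Weyl-group/Demazure-operator manipulation is where the real effort lies. Once the character identity is in hand, surjectivity of $D(\ell,\nu)\to\ker$ plus the matching of dimensions yields injectivity, completing exactness.
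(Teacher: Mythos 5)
Your proposal reproduces the paper's skeleton correctly up to a point: identifying the middle and right-hand terms via Theorem \ref{mainthm}, obtaining the surjection from Lemma \ref{difflevel} (equivalently Proposition \ref{sp}), and reducing to the claim that the kernel is cyclic on a single vector of degree $\lambda(h_i)$ are all exactly what the paper does; its Proposition \ref{refined} and the lemma following it make your ``critical window'' analysis precise, via an induction on $\Ht\alpha$ that uses the hypothesis $\omega_i(h_\alpha)\le 1$ essentially. One minor error first: the kernel generator is $(x^-_{\alpha_i}\otimes t)^{\lambda(h_i)}w_{\ell\omega_i+\lambda}$, not $(x^-_{\alpha_i})^{\lambda(h_i)}w_{\ell\omega_i+\lambda}$ as you wrote; the latter is a nonzero degree-zero vector whose image at level $\ell+1$ is still nonzero (since $\lambda(h_i)\le(\ell\omega_i+\lambda)(h_i)$), so it does not lie in the kernel at all. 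Your remarks about the grading shift suggest you intend the correct element, but as written it is wrong.

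The serious gap is in exactness on the left. Your injectivity of $D(\ell,\nu)\to\ker\pi$ rests entirely on the Euler characteristic identity $\ch_{\lie h}D(\ell,\ell\omega_i+\lambda)=\ch_{\lie h}D(\ell+1,\ell\omega_i+\lambda)+\ch_{\lie h}D(\ell,\nu)$, which you do not prove and for which you offer no workable mechanism: the multiplicative fusion character identities (Theorem \ref{mainthm}, Proposition \ref{demdimgen}) do not formally imply this additive one, and Demazure operators produce recursive, not additive, relations among characters. In the special case $\lambda=\ell\omega_i$ this identity \emph{is} the $Q$-system, and for general $\lambda$ it is precisely the new content of Theorem \ref{genqsystem} --- this is exactly why \cite{FH} could construct the surjection but not determine its kernel. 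So your plan reduces the theorem to a statement essentially equivalent to it. The paper's key idea, which your outline is missing, is to prove injectivity structurally with no character count: writing $D(\ell,\ell\omega_i+\lambda)\cong V_w(\Lambda)$, the kernel generator $(x^-_{\alpha_i}\otimes t)^{\lambda(h_i)}w_{\ell\omega_i+\lambda}$ is the extremal weight vector obtained by applying the affine reflection $\bos_{\alpha_i-\delta}$ to the lowest weight line (nonzero by $\lie{sl}_2$-theory for the real root $-\alpha_i+\delta$, since $(w_0w\Lambda,-\alpha_i+\delta)=-(\lambda,\alpha_i)<0$); because $V_w(\Lambda)$ is $\lie g$-stable, the submodule this vector generates contains the extremal line $V(\Lambda)_{w_0\bos_{\alpha_i-\delta}w_0w\Lambda}$ and hence is the Demazure module $V_{w_0\bos_{\alpha_i-\delta}w_0w}(\Lambda)$, which Proposition \ref{altdem} identifies with $\tau^*_{\lambda(h_i)}D(\ell,\nu)$. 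That single observation delivers both the well-definedness of the map on $w_\nu$ (your relation-checking step) and its injectivity; without it, or some genuine substitute for the character identity, your argument does not close.
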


\subsection{}The study of graded representations of current algebras was originally motivated by the representation theory of quantum affine algebras.  In this theory it is completely natural and interesting to talk about the prime irreducible representations: namely an irreducible representation which is not isomorphic to the tensor product of non--trivial irreducible representations (see \cite{CPprime}, \cite{CMY}, \cite{HL}). An  important family of prime irreducible  representations are the Kirillov--Reshetikhin modules. Using the work of several authors (\cite{CPweyl}, \cite{Ckir},\cite{Her}, \cite{Nak}, \cite{Kedem}) together with \cite{CM} shows that the $\lie g[t]$--module $D(\ell, \ell \omega_i)$ is the \lq\lq limit\rq \rq of the corresponding Kirillov--Reshetikhin modules. Other examples of prime representations can be found in \cite{CH}, \cite{CM}, \cite{HL}.  In all these examples one actually proves that  the underlying $\lie g$--module is prime which motivates the following definition.
\begin{defn} We say that a $\lie g$--module $V$  is prime if it is not isomorphic to the tensor product of a non-trivial pair of $\lie g$--modules.
\end{defn}
 It is not hard to see that any irreducible finite--dimensional  $\lie g$--module is prime. It is also trivial to construct examples of prime representations of $\lie g$ which are reducible. For instance, in the $\lie{sl}_2$ case the direct  sum of the natural and the adjoint representation is obviously prime. In the case when $\dim V<\infty$ it is clear that any $\lie g$--module  has a prime factorization: in other words, is isomorphic to a tensor product of non--trivial prime modules. However, it is not known in general if such a decomposition is unique. The uniqueness of  a tensor product of simple $\lie g$--modules was proved fairly recently in \cite{Rajan}, \cite{VenkVis}.
  Notice that a $\lie g[t]$--module $V$ which is prime is necessarily prime with respect to the fusion product as well.

 \subsection{}  Our final  result shows that  if $\lie g$ is of type $A$ or $D$, then any Demazure module is a fusion product of prime Demazure modules.
\begin{prop}  \label{primefactor} Let $(\ell,\lambda)\in\bn\times P^+$ and let $\lie g$ be any simply--laced simple Lie algebra. The module $D(\ell,\lambda)$ is prime if $\lambda=\ell\omega_i$ for some $i\in I$ or $\lambda(h_i)<\ell$ for all $1\le i\le n$.   More generally, if  $\lambda=\lambda^0+\sum_{i\in }m_i\ell\omega_i$ where $0\le \lambda^0(h_i)<\ell$ for all $1\le i\le n$,   and $\lie g$ is of type $A$ or $D$, then the isomorphism  \begin{equation}\label{primedem}D(\ell,\lambda)\cong_{\lie g[t]} D(\ell, \ell\omega_1)^{*m_1}*\cdots *D(\ell, \ell\omega_n)^{*m_n}*D(\ell,\lambda^0),\end{equation} is a prime factorization of $D(\ell,\lambda)$.

\end{prop}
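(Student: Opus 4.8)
The plan is to deduce the factorization \eqref{primedem} from the primeness of the individual factors together with Theorem~\ref{mainthm}, and then to prove primeness working entirely at the level of the underlying $\lie g$--module.

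First I would reduce the general assertion to two base cases. Since $\lie g$ is simply--laced we have $L^+=P^+$, so each $\omega_i\in L^+$; writing $\lambda-\lambda^0=\ell\sum_i m_i\omega_i$ and applying Theorem~\ref{mainthm} with the fundamental weights listed with multiplicity (so that $\{\lambda^1,\dots,\lambda^k\}$ consists of $\omega_i$ repeated $m_i$ times, and $\lambda^0$ is the non--rectangular part) gives precisely the isomorphism \eqref{primedem}. By the observation recorded just before the statement, that a $\lie g$--module which is prime is automatically prime for the fusion product, it then suffices to prove that each factor $D(\ell,\ell\omega_i)$ and $D(\ell,\lambda^0)$ is prime as a $\lie g$--module. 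I would prove this for an arbitrary simply--laced $\lie g$, so that it also covers the type $E$ primeness claim, where Theorem~\ref{mainthm} is not available.

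Suppose then $V:=D(\ell,\lambda)\cong A\otimes B$ as $\lie g$--modules, with $\lambda$ equal to $\ell\omega_i$ or satisfying $\lambda(h_k)<\ell$ for all $k$. The relations \eqref{locweyl} force $\lambda$ to be the unique maximal weight of $V$ with $\dim V_\lambda=1$; hence every weight of $V$ is $\le\lambda$, and writing $\lambda=\mu+\nu$ for the unique pair with $\dim A_\mu=\dim B_\nu=1$ one checks that $\mu,\nu$ are dominant and are in fact the unique maximal weights of $A$ and $B$ (using $\nu\in\wt B$ to see that all weights of $A$ are $\le\mu$, and symmetrically). If either of $\mu,\nu$ is $0$ the corresponding factor has all weights $\le 0$ and is therefore trivial, so for a genuine factorization we may assume $\mu,\nu\neq 0$. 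The key step is the computation of the first layer of weights: using \eqref{demquot1}--\eqref{demquot2}, together with the $\mathfrak{sl}_2$--argument that for $k\notin\supp\lambda$ the vector $(x^-_{\alpha_k}\otimes t)w_\lambda$ is annihilated by $x^+_{\alpha_k}$ and has negative $\alpha_k$--weight, hence is zero, one shows $\dim V_{\lambda-\alpha_k}=1$ for $k\in\supp\lambda$ and $=0$ otherwise. Comparing with $\dim V_{\lambda-\alpha_k}\ge\dim A_{\mu-\alpha_k}+\dim B_{\nu-\alpha_k}$ and with $\mu(h_k)+\nu(h_k)=\lambda(h_k)$ then yields a disjoint decomposition $\supp\lambda=\supp\mu\sqcup\supp\nu$ into two nonempty sets.

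For $\lambda=\ell\omega_i$ this finishes the proof at once: $\supp\lambda=\{i\}$ cannot be a disjoint union of two nonempty sets, a contradiction, so $D(\ell,\ell\omega_i)$ is prime. The remaining case $\lambda(h_k)<\ell$ for all $k$ is genuinely harder, since $\supp\lambda$ may be large and the partition $\supp\mu\sqcup\supp\nu$ is not excluded by the top layer alone. Here I would choose a root $\beta$ whose support meets both $\supp\mu$ and $\supp\nu$ — such $\beta$ exists because the Dynkin diagram is connected — and compare the multiplicity of $\lambda-\beta$, and of the second--layer weights $\lambda-\alpha_p-\alpha_q$ with $p\in\supp\mu$, $q\in\supp\nu$, predicted by $A\otimes B$ against the value forced by the defining relations of $D(\ell,\lambda)$. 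I expect this to be the main obstacle: because $A$ and $B$ may be reducible (each of the form $V(\mu)\oplus A'$, $V(\nu)\oplus B'$ with strictly lower constituents), the naive local weight counts are matched by the tensor product, so the contradiction must come from a more global comparison of the full $\lie g$--characters. Concretely, I would invoke the description of $\ch_{\lie h}D(\ell,\lambda)$ supplied by Proposition~\ref{altdem} and the theory of Demazure operators to show that the constituent with highest weight $\mu+\nu-\beta$ which is forced to occur in $V(\mu)\otimes V(\nu)$ by the disjointness of the supports does not appear with the required multiplicity in $D(\ell,\lambda)$, contradicting $V\cong A\otimes B$ and completing the argument.
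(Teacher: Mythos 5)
Your reduction of the factorization \eqref{primedem} to Theorem~\ref{mainthm} plus primeness of the individual factors is exactly the paper's route (including the observation that $\lie g$--primeness implies primeness for the fusion product), and your first--layer weight count is a correct, somewhat more elementary substitute for the paper's Lemma~\ref{suppintempty}: the paper instead argues that $i\in\supp\mu\cap\supp\nu$ would force $\Hom_{\lie g}(V(\lambda-\alpha_i),D(\ell,\lambda))\ne 0$ via Clebsch--Gordan and Lemma~\ref{elementaryJ}, while the relation $(x^-_{\alpha_i}\otimes t)w_\lambda=0$ forces this Hom space to vanish. Either way, the disjointness $\supp\lambda=\supp\mu\sqcup\supp\nu$ follows and the case $\lambda=\ell\omega_i$ (indeed $\lambda=m\omega_i$, $m\le\ell$) is settled.

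The case $\lambda(h_i)<\ell$ for all $i$, however, is a genuine gap in your proposal: you offer only a speculative sketch, and the speculation points away from what actually works. The paper does \emph{not} need any global character or Demazure--operator comparison. Instead (Lemma~\ref{chooseJ}) it produces a \emph{connected subdiagram $J$ of type $A$ meeting each of $\supp\mu$, $\supp\nu$ in exactly one node}, say $i_1$ and $i_2$ --- not merely a root whose support meets both supports, which is all your connectivity argument gives. With $\theta_J$ the highest root of $\lie g_J$, the hypothesis $\lambda(h_i)<\ell$ yields $\lambda(h_\alpha)<\ell$ for $\alpha\in R^+_J\setminus\{\theta_J\}$ and $\lambda(h_{\theta_J})<2\ell$, so the defining relations give $(x^-_\alpha\otimes t)w_\lambda=0$ for $\alpha\in R^+_J$, $\alpha\ne\theta_J$, together with $(x^-_{\theta_J}\otimes t^2)w_\lambda=0$ and $(x^-_{\theta_J}\otimes t)^rw_\lambda=0$ for $r>p=\max\{0,\lambda(h_{\theta_J})-\ell\}$. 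A PBW argument then shows
$$\bu(\lie g_J[t])w_\lambda\cong_{\lie g_J}\bigoplus_{s=0}^{p}V_J(\lambda_J-s\theta_J)^{m_s},$$
and Lemma~\ref{elementaryJ}(ii) converts this into the vanishing $\Hom_{\lie g}(V(\lambda-s\theta_J),D(\ell,\lambda))=0$ for all $s>p$. On the other hand, the embedding $V(\mu)\otimes V(\nu)\hookrightarrow D(\ell,\lambda)$ from Lemma~\ref{step1} forces this Hom space to be nonzero for $0\le s\le\min\{\mu(h_{\theta_J}),\nu(h_{\theta_J})\}$, and the punchline is the inequality $p<\min\{\mu(h_{\theta_J}),\nu(h_{\theta_J})\}$, which holds precisely because $\mu(h_{\theta_J})=\mu(h_{i_1})<\ell$ and $\nu(h_{\theta_J})=\nu(h_{i_2})<\ell$; this step uses both that $J$ meets each support in a single node and that $\lie g_J$ is of type $A$ (so every coefficient of $\theta_J$ is $1$). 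Your proposed comparison of second--layer weights $\lambda-\alpha_p-\alpha_q$ indeed cannot produce a contradiction (as you yourself suspect, the tensor product matches those local counts), but the correct fix is this local Hom--multiplicity argument along $\theta_J$, not a global character computation via Proposition~\ref{altdem}; since you neither identify the right subdiagram nor carry out any version of the comparison, the hard half of the primeness claim remains unproved in your proposal.
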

\begin{rem}In \cite{BrCM} the relationship of these prime Demazure modules  to prime representations of  quantum affine algebras is studied.\end{rem}

 \section{Proof of Theorem \ref{mainthm}}\label{mainthmpf}
In this section we shall assume Proposition \ref{key} and prove Theorem \ref{mainthm}. As in \cite{FoL1} and \cite{V}, the proof uses the Demazure operators and the Demazure character formula in a crucial way. We recollect these concepts briefly and refer the interested reader to  \cite{Dem}, \cite{FoL1}, \cite{Kumar} and \cite{Mathieu} for a more detailed discussion.

\subsection{} There are two main ingredients in the proof of the Theorem. The first is the following proposition which was proved in \cite{V} but we include a  very brief  sketch of the proof for the reader's convenience.

\begin{prop}\label{quotient}   Let $(\ell,\lambda)\in \mathbb{N}\times P^+$. Let  $(p_j, \mu_j )\in \bn\times  L^+$ for $1\leq j\leq m$ be  such that there exists $\mu\in P^+$ with
$$\ell\mu = p_1\mu_1+\cdots+p_m\mu_m, \ \ \mu(h_{\alpha})\geq\sum_{j=1}^m\mu_j(h_{\alpha}),\ \ {\rm{for\ all }}\ \ \  \alpha\in R^+.$$
There exists a non-zero surjective map of graded $\lie g[t]$-modules,
$$D(\ell,\ell\mu+\lambda)\longrightarrow D(p_1,p_1\mu_1) * \cdots * D(p_m,p_m\mu_m) * D(\ell,\lambda)\rightarrow0.$$ 
\end{prop}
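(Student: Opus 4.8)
The plan is to exhibit a $\lie g[t]$--module map from $D(\ell,\ell\mu+\lambda)$ onto the fusion product by checking that the image of the cyclic generator $w_\lambda$ of the left--hand module satisfies the defining relations of the cyclic generator of the fusion product. Recall from Section 2 that the fusion product $D(p_1,p_1\mu_1)*\cdots*D(p_m,p_m\mu_m)*D(\ell,\lambda)$ is cyclic on the image $\bow:=w_{\mu_1}*\cdots*w_{\mu_m}*w_\lambda$ of the tensor product of the generating vectors. First I would verify that $\bow$ satisfies the relations in \eqref{locweyl}: it is annihilated by $\lie n^+[t]$ (being a fusion product of highest--weight vectors), it is an eigenvector for $\lie h$ of the correct weight $\ell\mu+\lambda = p_1\mu_1+\cdots+p_m\mu_m+\lambda$, and the Serre--type relation $(x^-_{\alpha_i})^{(\ell\mu+\lambda)(h_i)+1}\bow=0$ holds because the weight $\ell\mu+\lambda$ is dominant and $\bow$ sits inside a finite--dimensional $\lie g$--module. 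Granting these, the universal property of $D(\ell,\ell\mu+\lambda)$ produces a surjective map $w_{\ell\mu+\lambda}\mapsto\bow$ onto the cyclic submodule generated by $\bow$, which is the whole fusion product.

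The substance of the argument lies in checking the higher grade relations \eqref{demquot1} and \eqref{demquot2} for $\bow$, namely $(x^-_\alpha\otimes t^{s_\alpha})\bow=0$ and, when $m_\alpha<d_\alpha\ell$, the power relation $(x^-_\alpha\otimes t^{s_\alpha-1})^{m_\alpha+1}\bow=0$, where $s_\alpha,m_\alpha$ are computed from $(\ell\mu+\lambda)(h_\alpha)$ at level $\ell$. My approach would be to compute the action of $x^-_\alpha\otimes t^r$ on $\bow$ via the Leibniz/comultiplication rule in $\bu(\lie g[t])$: applied to a tensor product, $x^-_\alpha\otimes t^r$ distributes as a sum over the factors of $(x^-_\alpha\otimes(t+z_j)^r)$ acting on the $j$--th slot. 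Each individual factor $D(p_j,p_j\mu_j)$ has its own grade relations governed by $p_j\mu_j(h_\alpha)=d_\alpha p_j(s^{(j)}_\alpha-1)+m^{(j)}_\alpha$, and the factor $D(\ell,\lambda)$ is governed by $\lambda(h_\alpha)$. The key arithmetic input is the hypothesis $\mu(h_\alpha)\ge\sum_j\mu_j(h_\alpha)$ for all $\alpha\in R^+$, which forces the total ``degree budget'' $s_\alpha$ on the left to be at least the sum of the degrees at which the individual factors are killed, so that a sufficiently high power of $x^-_\alpha\otimes t^{\,\cdot}$ annihilates the tensor product slot by slot.

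The main obstacle, and the step I expect to demand real care, is the bookkeeping that converts ``each factor is annihilated in its own degree range'' into ``the product is annihilated in the prescribed degree $s_\alpha$ and to the prescribed power $m_\alpha+1$.'' Because fusion products are associated--graded objects, one works with the tensor product $\bigotimes_j D(p_j,p_j\mu_j)\otimes D(\ell,\lambda)$ first and then passes to $\gr$; an operator $x^-_\alpha\otimes t^r$ that kills the top filtration piece need not kill a representative in the tensor product, so I would have to argue that the relevant monomials either vanish outright or land in strictly lower filtration degree and hence vanish in $\gr$. Concretely, I would show that any monomial in the $x^-_\alpha\otimes t^{\,\cdot}$'s of total $t$--degree at least $s_\alpha$ (respectively, any $(m_\alpha+1)$--st power at $t$--degree $s_\alpha-1$) must route too many lowering operators through some single factor relative to that factor's own relations \eqref{demquot1}--\eqref{demquot2}, using the additivity $s_\alpha\ge\sum_j s^{(j)}_\alpha$ guaranteed by the hypothesis on $\mu$. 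This is precisely the estimate carried out in \cite{V} and \cite{FoL} for the $\lambda=0$ case, so I would follow that template, the only new feature being the presence of the extra evaluation--type factor $D(\ell,\lambda)$, whose contribution is controlled directly by Remark \ref{locfindim} and the definition of $s_\alpha,m_\alpha$ at level $\ell$.
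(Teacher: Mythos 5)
Your overall strategy is the paper's: send $w_{\ell\mu+\lambda}\mapsto\bow$, check that $\bow$ satisfies the defining relations of $D(\ell,\ell\mu+\lambda)$, and get surjectivity from cyclicity of the fusion product; the verification of \eqref{locweyl} is routine, exactly as you say. The gap is in the step you yourself flag as the substance, and your sketch of it would fail. If you distribute $x^-_\alpha\otimes t^r$ over the tensor factors, it acts on the $j$-th slot as $x^-_\alpha\otimes(t+z_j)^r$, and the expansion of $(t+z_j)^r$ contains all the low powers $t^k$ with $k<s^j_\alpha$, which do \emph{not} annihilate $v_j=w_{p_j\mu_j}$. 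So no slot-term ``vanishes outright'', however large $r$ is, and a pigeonhole count of how many lowering operators route through each factor cannot help for the relation \eqref{demquot1}, which involves a single operator. Moreover, the surviving terms $v_1\otimes\cdots\otimes(x^-_\alpha\otimes t^k)v_j\otimes\cdots\otimes w_\lambda$ are not visibly in low filtration pieces: $F^r$ consists of elements of $\bu(\lie g[t])$ of degree at most $r$ applied to the \emph{whole} tensor $v_1\otimes\cdots\otimes w_\lambda$, and an operator applied to a single slot is not of that form. So the assertion ``each term vanishes or lands in strictly lower filtration degree'' is precisely what remains to be proved, and your proposal does not contain the idea that proves it.

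That idea, which is the entire content of the paper's short proof (and of the template in \cite{V} that you cite), is to apply a single operator chosen so that it kills the generating tensor exactly, not merely modulo filtration. Writing $\mu(h_\alpha)=d_\alpha s_\alpha$, $\mu_j(h_\alpha)=d_\alpha s^j_\alpha$, $\lambda(h_\alpha)=d_\alpha\ell(r_\alpha-1)+m_\alpha$ and $b_\alpha=s_\alpha-\sum_j s^j_\alpha\ge 0$ (this is where the hypothesis on $\mu$ enters, and it is its only use), one applies
$$x^-_\alpha\otimes t^{b_\alpha}(t-z_1)^{s^1_\alpha}\cdots(t-z_m)^{s^m_\alpha}(t-z_{m+1})^{r_\alpha}$$
to $v_1\otimes\cdots\otimes v_m\otimes w_\lambda$. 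On the $j$-th slot this acts by $x^-_\alpha\otimes t^{s^j_\alpha}g_j(t)$ (resp.\ by $x^-_\alpha\otimes t^{r_\alpha}g_{m+1}(t)$ on the last slot), a combination of operators $x^-_\alpha\otimes t^k$ with $k\ge s^j_\alpha$ (resp.\ $k\ge r_\alpha$), all of which annihilate the corresponding cyclic vector; hence the displayed operator gives zero on the nose in the tensor product. Since the polynomial is monic of degree $s_\alpha+r_\alpha$, and since $(\ell\mu+\lambda)(h_\alpha)=d_\alpha\ell(s_\alpha+r_\alpha-1)+m_\alpha$ so that $s_\alpha+r_\alpha$ is exactly the exponent prescribed by \eqref{demquot1} for $D(\ell,\ell\mu+\lambda)$, it follows that $(x^-_\alpha\otimes t^{s_\alpha+r_\alpha})(v_1\otimes\cdots\otimes w_\lambda)$ lies in $F^{s_\alpha+r_\alpha-1}$, i.e.\ $(x^-_\alpha\otimes t^{s_\alpha+r_\alpha})\bow=0$ in $\gr$; the power relation \eqref{demquot2} follows identically from the same operator with $(t-z_{m+1})^{r_\alpha}$ replaced by $(t-z_{m+1})^{r_\alpha-1}$, raised to the power $m_\alpha+1$. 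Pointing to \cite{V} and \cite{FoL} does locate the right template, but this interpolation polynomial is the step a self-contained proof must actually write down.
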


\begin{proof}  
For $\alpha\in R^+$, and $1\le j\le m$, write $$\lambda(h_\alpha)= d_\alpha\ell(r_\alpha-1) +m_\alpha,\ \ \ 0< m_\alpha\le d_\alpha \ell,\qquad \mu(h_\alpha)=d_\alpha s_\alpha,\ \ \mu_j(h_\alpha)= d_\alpha s^j_{\alpha}. $$
 For $1\le j\le m$ set $v_j=w_{p_j\mu_j}$ and recall that $$(x_\alpha^-\otimes t^{s^j_{\alpha}})v_j=0,\quad (x^-_\alpha\otimes t^{r_\alpha})w_\lambda=0,\ \   (x^-_\alpha\otimes t^{r_\alpha-1})^{m_\alpha+1}w_\lambda=0.$$
Let $\bow$ be  the image of $v_1\otimes\cdots\otimes v_m\otimes w_\lambda$  in $ D(p_1,p_1\mu_1) * \cdots * D(p_m,p_m\mu_m) * D(\ell,\lambda)$.
  The proposition follows if we show that for $\alpha\in R^+$,
 \begin{gather}\label{demrel} (x_\alpha^-\otimes t^{s_\alpha+r_\alpha})\bow=0,\qquad  {\rm{and}}\ \
 (x^-_{\alpha}\otimes t^{s_{\alpha}+r_\alpha-1})^{m_{\alpha}+1}\bow =0 , \ \ {\rm{if}}\ \  m_\alpha<d_\alpha\ell.\end{gather}
Set  $b_\alpha=s_\alpha-\sum_j s_{\alpha}^j$  and note that our assumptions imply that $b_\alpha\ge 0$. For $z_1,\cdots , z_{m+1}$  be  the distinct complex numbers which define the fusion product. This means that in the corresponding tensor product,  we have
\begin{gather*}
(x^-_\alpha\otimes t^{b_\alpha}(t-z_1)^{s_{\alpha}^1}\cdots(t-z_m)^{s_{\alpha}^m}(t-z_{m+1})^{r_\alpha})(v_1\otimes\cdots \otimes v_m\otimes v_{m+1} )\\
=\sum_{j=1}^{m+1}\left(v_1\otimes\cdots\otimes(x^-_\alpha\otimes  t^{s^j_{\alpha}} g_j(t) v_j)\otimes\cdots\otimes v_{m+1}\right)=0,
\end{gather*}  where $v_{m+1}=w_\lambda$ and $g_j(t)=\prod_{r\ne j}(t-z_r+z_j)^{s_{\alpha}^r}$. It is now immediate that $ (x_\alpha^-\otimes t^{s_\alpha+r_\alpha})\bow=0$. The proof of the second equality in \eqref{demrel} is identical and we omit the details.

\end{proof}

\subsection{} The second result that we  need  is the following.
\begin{prop}\label{demdimgen}  For  $(\ell,\lambda)\in \mathbb{N}\times P^+$ and $(\ell,\mu)\in \bn\times  L^+$, we have,
$$\dim D(\ell, \ell\mu+\lambda)=\dim D(\ell,\lambda)  \dim D(\ell, \ell\mu).$$
\end{prop}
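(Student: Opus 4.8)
The plan is to prove the dimension identity $\dim D(\ell,\ell\mu+\lambda)=\dim D(\ell,\lambda)\cdot\dim D(\ell,\ell\mu)$ by combining the surjection coming from \propref{quotient} with a reverse dimension bound obtained from the theory of Demazure modules and the Demazure character formula. First I would apply \propref{quotient} in the special case $m=1$, $(p_1,\mu_1)=(\ell,\mu)$, so that the hypotheses $\ell\mu=\ell\mu$ and $\mu(h_\alpha)\ge\mu(h_\alpha)$ are trivially satisfied. This yields a surjective map of graded $\lie g[t]$--modules
\[
D(\ell,\ell\mu+\lambda)\longrightarrow D(\ell,\ell\mu)*D(\ell,\lambda)\longrightarrow 0,
\]
and since the fusion product has the same underlying vector space as the tensor product of the $\lie g$--modules $D(\ell,\ell\mu)$ and $D(\ell,\lambda)$, surjectivity immediately gives the inequality
\[
\dim D(\ell,\ell\mu+\lambda)\ \ge\ \dim D(\ell,\ell\mu)\cdot\dim D(\ell,\lambda).
\]

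The real content is therefore the reverse inequality $\dim D(\ell,\ell\mu+\lambda)\le\dim D(\ell,\ell\mu)\cdot\dim D(\ell,\lambda)$. Here I would realize all three modules as Demazure modules inside level $\ell$ integrable highest weight representations of $\hat{\lie g}$ via \propref{altdem}, so that $\dim D(\ell,\lambda)$ is computed by the Demazure character formula applied to the appropriate Weyl group element. The idea is to write the Demazure element associated to $\ell\mu+\lambda$ as a product of the element associated to $\ell\mu$ and the element associated to $\lambda$, and to use the additivity of lengths established in \propref{add}(ii): for suitable $w\in W$ and $\nu\in L^+$ one has $\ell(t_{-\mu}t_{-\nu}w)=\ell(t_{-\mu})+\ell(t_{-\nu}w)$. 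This length additivity is exactly what makes the Demazure operator associated to the longer element factor as the composite of the two Demazure operators, and it is the crux of the argument. The Demazure operators are projection operators that only increase or preserve dimensions, so the factorization, together with the fact that $t_{-\mu}$ corresponds precisely to the level--$\ell$ rectangular piece $D(\ell,\ell\mu)$, produces the character-level identity whose specialization $e(\delta)\mapsto 1$ (via the map $\bz[\hat P]\to\bz[\hat P]/I_\delta$) gives the dimension bound.

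The step I expect to be the main obstacle is making the factorization of Demazure operators yield precisely a \emph{product} of the two dimensions rather than merely a bound in the wrong direction. A Demazure operator applied to a character generically produces something of larger dimension, so one must be careful that the translation part $t_{-\mu}$, when applied to the Demazure character of $D(\ell,\lambda)$, contributes exactly the factor $\dim D(\ell,\ell\mu)$ and not more. Concretely, I would track how $t_{-\mu}$ acts on the relevant element of $\bz[\hat P]/I_\delta$ using the explicit translation formula \eqref{translation}, verify via \propref{add}(ii) that no length collapses occur so that the Demazure operator $\mathcal D_{t_{-\mu}}$ is genuinely the composite of length-$\ell(t_{-\mu})$ many simple Demazure operators acting on a highest weight already ``saturated'' by $\lambda$, and then show that this composite multiplies dimensions rather than overcounting. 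Once the two inequalities are in hand, equality of dimensions follows, and in fact the surjection of \propref{quotient} is then forced to be an isomorphism, which is how this proposition feeds into the proof of \thmref{mainthm}.
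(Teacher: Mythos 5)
Your outline follows the same route as the paper's own proof: realize the characters via the affine Demazure character formula (Lemma~\ref{chardl} together with \thmref{demcharacterformula}, i.e.\ \eqref{finally}), factor the relevant element of the extended affine Weyl group, use the length additivity of \propref{add}(ii) to factor the Demazure operator, and specialize $e(\delta)\mapsto 1$. But as written, the proposal leaves unproved exactly the two steps that carry all the content. First, the existence of your ``suitable $w\in W$ and $\nu\in L^+$'' is not automatic: it is precisely \propref{key}, the paper's key technical result and the sole reason the main theorem is restricted to classical types and $G_2$. Moreover, \propref{key} applies only to weights taking values at most $d_i\ell$ on the simple coroots, so before it can be invoked one must write $\lambda=\ell\lambda_1+\lambda_2$ with $\lambda_1\in L^+$ and $\lambda_2(h_i)\le d_i\ell$, apply \propref{key} to $\lambda_2$, and absorb $t_{w_0\lambda_1}$ into the translation part of the factored element; this reduction is how the paper's Lemma~\ref{keyuse} is organized, and it appears nowhere in your proposal.

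Second, the step you yourself call ``the main obstacle'' is left unresolved, and the heuristic you offer for it points in the wrong direction. What is needed is the identity $D_{t_{w_0\mu}}\bigl(e(\ell\Lambda_0)\,\ch_{\lie h}D(\ell,\lambda)\bigr)\equiv e(\ell\Lambda_0)\,\ch_{\lie h}D(\ell,\ell\mu)\,\ch_{\lie h}D(\ell,\lambda)$ modulo $I_\delta$; the observation that Demazure operators ``only increase or preserve dimensions'' bounds the left-hand side from \emph{below}, which is useless for the upper bound your two-inequality scheme requires. The missing mechanism is Lemma~\ref{winv}: since $\ch_{\lie h}D(\ell,\lambda)$ is the character of a finite-dimensional $\lie g$--module it is $W$--invariant, hence by \eqref{demwinv} it passes through $D_{t_{w_0\mu}}$, and \eqref{finally} applied to the pair $(\ell,\ell\mu)$ identifies $D_{t_{w_0\mu}}(e(\ell\Lambda_0))\equiv e(\ell\Lambda_0)\,\ch_{\lie h}D(\ell,\ell\mu)$. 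Note that this gives an \emph{exact} character identity --- the paper in fact proves the stronger statement $D(\ell,\ell\mu+\lambda)\cong_{\lie g}D(\ell,\ell\mu)\otimes D(\ell,\lambda)$ --- so your two-inequality architecture is superfluous: the surjection of \propref{quotient} plays no role in this proposition and is used only afterwards, to upgrade the resulting dimension equality to the isomorphism of \thmref{mainthm}.
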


Assuming Proposition \ref{demdimgen} the proof of Theorem \ref{mainthm} is completed as follows. It was proved in \cite{FoL} that if $\mu_s\in L^+$ for $1\le s\le m$, then $$\dim D(\ell, \ell\mu)=\prod_{s=1}^m\dim D(\ell, \ell\mu_s),$$ where $\mu=\sum_{s=1}^m\mu_s$.  Using Proposition \ref{demdimgen}, we get
  $$\dim D(\ell,\ell\mu+\lambda)=\dim\left( D(\ell, \ell\mu_1) * \cdots * D(\ell,\ell\mu_m) * D(\ell,\lambda)\right).$$  Taking $p_1=\cdots p_m=\ell$ in Proposition \ref{quotient} now establishes Theorem \ref{mainthm}.

\subsection{} The rest of the section is devoted to the proof of Proposition \ref{demdimgen}.
  Recall from Section \ref{weyl} that the composite map $$\bz[P]\hookrightarrow\bz[\hat P]\longrightarrow \bz[\hat P]/I_\delta,$$ is injective. Given two elements $\chi,\chi'$ of $\bz[\hat P]$, we write $\chi\equiv\chi'$ if they have the same image in $\bz[\hat P]/I_\delta$.

\begin{lem}\label{chardl} Let $w\in \widehat W$, $\sigma\in\Sigma$, $\Lambda\in\hat{P}^+$ and $(\ell,\lambda)\in \bn\times P^+$ be such that $w\sigma\Lambda= w_0\lambda+\ell\Lambda_0$. Then $\ch_{\lie h}D(\ell,\lambda)=\sum_{\mu\in P}\dim D(\ell,\lambda)_\mu e(\mu) \in\bz[P]$ is invariant under the action of $W$ on $P$ and we have 
 $$\ch_{\hat{\lie h}} V_w(\sigma\Lambda)\equiv e(\ell\Lambda_0)\ch_{\lie h}D(\ell,\lambda).$$
\end{lem}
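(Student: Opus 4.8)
The first assertion I would dispose of by a standard fact about finite--dimensional representations. By \remref{locfindim} the module $D(\ell,\lambda)$ is finite--dimensional, and restricting the $\lie g[t]$--action to the subalgebra $\lie g\subset\lie g[t]$ of constant maps makes it a finite--dimensional $\lie g$--module. Its $\lie h$--weights therefore lie in $P$ and occur with finite multiplicities, so $\ch_{\lie h}D(\ell,\lambda)\in\bz[P]$; and the character of any finite--dimensional $\lie g$--module is $W$--invariant, since weight multiplicities are constant on $W$--orbits. No further input is needed here.

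For the congruence I would start from \propref{altdem}, which supplies the isomorphism $D(\ell,\lambda)\cong V_w(\sigma\Lambda)$ together with the numerical identity \eqref{gradediso}. Every nonzero weight space of $V_w(\sigma\Lambda)$ lies inside $V(\sigma\Lambda)$, hence has level $\ell$, so any weight $\Lambda'$ occurring satisfies $\Lambda'(c)=\ell$. Using the decomposition $\hat{\lie h}^*=\lie h^*\oplus\bc\Lambda_0\oplus\bc\delta$ I would write each such weight uniquely as $\Lambda'=\mu+\ell\Lambda_0+s\delta$ with $\mu\in P$ and $s\in\bz$, where $s$ is the $d$--eigenvalue. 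Passing to $\bz[\hat P]/I_\delta$ sets $e(\delta)=1$, so $e(\Lambda')\equiv e(\ell\Lambda_0)e(\mu)$, and collecting terms yields
$$\ch_{\hat{\lie h}}V_w(\sigma\Lambda)\equiv e(\ell\Lambda_0)\sum_{\mu\in P}\Bigl(\sum_{s\in\bz}\dim V_w(\sigma\Lambda)_{\mu+\ell\Lambda_0+s\delta}\Bigr)e(\mu).$$
Comparing with \eqref{gradediso}, the statement follows as soon as the inner sum over all $s\in\bz$ agrees with the sum over $s\ge 0$ appearing there, i.e. equals $\dim D(\ell,\lambda)_\mu$.

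The only point that genuinely requires an argument — the main, though modest, obstacle — is precisely this vanishing of the weight spaces with $s<0$. I would deduce it from the extremal description $V_w(\sigma\Lambda)=\bu(\hat{\lie b})V(\sigma\Lambda)_{w\sigma\Lambda}$ together with the hypothesis $w\sigma\Lambda=w_0\lambda+\ell\Lambda_0$. The generating extremal vector has $d$--eigenvalue zero, since $(w_0\lambda+\ell\Lambda_0)(d)=0$, so it sits in the slice $s=0$. On the other hand every positive affine root in $\hat R^+=\{\alpha+r\delta:\alpha\in R,\ r\in\bn\}\cup R^+\cup\{r\delta:r\in\bn\}$ has non--negative $\delta$--component, and $\hat{\lie b}=\hat{\lie h}\oplus\bigoplus_{\alpha\in\hat R^+}\hat{\lie g}_\alpha$ acts by raising weights by sums of positive roots (with $\hat{\lie h}$ preserving the weight). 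Hence applying $\bu(\hat{\lie b})$ to the extremal vector can only increase the $d$--eigenvalue, so every nonzero weight space of $V_w(\sigma\Lambda)$ has $s\ge 0$. This identifies the two sums, turns the inner sum into $\dim D(\ell,\lambda)_\mu$ via \eqref{gradediso}, and gives $\ch_{\hat{\lie h}}V_w(\sigma\Lambda)\equiv e(\ell\Lambda_0)\ch_{\lie h}D(\ell,\lambda)$, completing the proof.
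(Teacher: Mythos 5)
Your proof is correct and follows essentially the same route as the paper: $W$--invariance is deduced from the finite--dimensionality of $D(\ell,\lambda)$ as a $\lie g$--module, and the congruence comes from decomposing the weights of $V_w(\sigma\Lambda)$ as $\ell\Lambda_0+\mu+s\delta$, passing modulo $I_\delta$, and invoking \eqref{gradediso}. The only difference is that you explicitly justify the vanishing of weight spaces with negative $\delta$--component (via $\bu(\hat{\lie b})$--generation from the extremal vector of weight $w_0\lambda+\ell\Lambda_0$, whose $d$--eigenvalue is zero), a point the paper absorbs into the unproved assertion that the sum may be taken over $s\in\bz_{\geq 0}$.
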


\begin{pf}
 The fact that $\ch_{\lie h}D(\ell,\lambda)$ is $W$--invariant is immediate since $D(\ell,\lambda)$ is a finite--dimensional $\lie g$--module. Recall that,
\begin{gather*}
\ch_{\hat{\lie h}}V_w(\sigma\Lambda)= \sum_{\Lambda' \in \hat{P}} \dim(V_w(\sigma\Lambda)_{\Lambda'})e(\Lambda').\end{gather*}
Since $\Lambda(c)=\ell$, we may assume that the sum is over elements of $\hat P$ of the form
 $\ell\Lambda_0+\mu+s\delta$ for $\mu\in P$ and $s\in\bz_{\geq0}$.  Going mod $I_\delta$, we get that $$
\ch_{\hat{\lie h}}V_w(\sigma\Lambda)\equiv e(\ell\Lambda_0)
 \sum_{\mu \in P} \left(\sum_{s\in \bz_{\geq0}}\dim V_w(\sigma\Lambda)_{\ell\Lambda_0+\mu+s\delta}\right)e(\mu) =e(\ell\Lambda_0)\ch_{\lie h}D(\ell,\lambda),$$ where the last equality follows from \eqref{gradediso}.

\end{pf}\

\subsection{}  For $0\le i\le n$, the Demazure operator $D_i:\bz[\hat P]\to\bz[\hat P]$ is defined  by,
$$D_i(e(\Lambda))=\frac{e(\Lambda)-e(\bos_i(\Lambda)-\alpha_i)}{1-e(-\alpha_i)}.$$
 Here for $1\le i\le n$ we identify the  generator $\bos_i$ of $W$ with the element $(\bos_i, 0)$ of $\widehat W$ and $\bos_0=(s_\theta, t_{\theta})$.
Given a reduced expression $w= \bos_{i_1}\cdots\bos_{i_r}$ for an element $w\in\widehat W$,  set $D_w = D_{{i_1}}\cdots D_{{i_r}},$  and note that  $D_w$ is independent of the choice of reduced expression for $w$ (see \cite{Kumar1}, Corollary 8.2.10). For  $\sigma\in\Sigma$,
 and $w\in\widehat W$, set $D_{w\sigma }(e(\Lambda))= D_{w}(e(\sigma(\Lambda))$.   Since  $D_i(e(\delta))=e(\delta)$, it follows that for all $w\in\widetilde W$, the operator  $D_w$  descends to $\bz[\hat P]/I_\delta$.

The following result is proved in \cite[Lemma 6, Lemma 7, Section 3]{FoL1}.
\begin{lem}\label{winv} Let $\chi\in\bz[P]$ be a $W$--invariant element of $\bz[P]$.  Then $D_w(\chi)\equiv \chi $ for all $w\in\widetilde W$. Moreover, for all $\Lambda\in\hat  P$, we have $$D_w(e(\Lambda)\chi)\equiv \chi D_w(e(\Lambda)).$$
\hfill\qedsymbol
\end{lem}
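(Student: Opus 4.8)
The plan is to carry out everything inside the quotient ring $\bz[\hat P]/I_\delta$, which is legitimate precisely because each $D_i$, $0\le i\le n$, descends to this quotient, as recorded just above the lemma; I write $\bar f$ for the image of $f\in\bz[\hat P]$. Writing a general element of $\widetilde W$ as $w'\sigma$ with $w'=\bos_{i_1}\cdots\bos_{i_r}\in\widehat W$ and using the definition $D_{w'\sigma}(e(\Lambda))=D_{w'}(e(\sigma\Lambda))$, both assertions reduce to the three single-step statements
\[
D_i(\chi)\equiv\chi,\qquad D_i(e(\Lambda)\chi)\equiv\chi\,D_i(e(\Lambda))\quad(0\le i\le n),\qquad \sigma(\chi)\equiv\chi\ \ (\sigma\in\Sigma).
\]
Once these are in hand I would peel the operators off one at a time in the quotient: $\bar D_{i_r}\bar\chi=\bar\chi$ forces $\bar D_{i_{r-1}}\bar D_{i_r}\bar\chi=\bar\chi$, and so on; the Leibniz relation iterates in the same fashion after expanding $D_{w'}(e(\Lambda))$ as a $\bz$--combination of monomials $e(\Lambda')$ and applying the single-step rule to each $D_i(\chi\,e(\Lambda'))$.

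For $1\le i\le n$ the two single-operator identities are immediate, and in fact hold before passing to the quotient. Since $\chi\in\bz[P]$ is $W$--invariant it is fixed by $\bos_i$, and writing the Demazure operator as $D_i(f)=\bigl(f-e(-\alpha_i)\bos_i(f)\bigr)/\bigl(1-e(-\alpha_i)\bigr)$ one reads off $D_i(\chi)=\chi$ and the module property $D_i(f\chi)=\chi D_i(f)$ over the ring of $\bos_i$--invariants by a one-line computation.

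The substance of the proof is the affine reflection $\bos_0$ and the diagram automorphisms $\Sigma$, for which the $W$--invariance of $\chi$ is not directly usable. The key observation is that modulo $I_\delta$ both act on $e(\mu)$, $\mu\in P$, exactly like elements of the finite Weyl group. For $\bos_0=(s_\theta,t_\theta)$ the translation formula \eqref{translation} gives $\bos_0\mu\in s_\theta\mu+\bz\delta$ for $\mu\in\lie h^*$, so $e(\bos_0\mu)\equiv e(s_\theta\mu)$ and hence $\bos_0(\chi)\equiv s_\theta(\chi)=\chi$; then $D_0(\chi)\equiv\chi$ drops out of the formula for $D_0$ read modulo $I_\delta$. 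For the Leibniz relation at $i=0$ I would expand
\[
D_0(e(\Lambda)\chi)-\chi\,D_0(e(\Lambda))=-e(-\alpha_0)\,\bos_0(e(\Lambda))\,\bigl(\bos_0(\chi)-D_0(\chi)\bigr),
\]
where the factor $\bigl(\bos_0(\chi)-\chi\bigr)/\bigl(1-e(-\alpha_0)\bigr)=\bos_0(\chi)-D_0(\chi)$ is a genuine element of $\bz[\hat P]$; the right-hand side is $\equiv0$ because both $\bos_0(\chi)$ and $D_0(\chi)$ are $\equiv\chi$. Finally, realizing $\sigma=t_\nu u$ with $u\in W$, $\nu\in L$ under $\widetilde W\cong W\ltimes t_L$, the same formula \eqref{translation} gives $\sigma(\mu)=u\mu-(u\mu,\nu)\delta\equiv u\mu$ for $\mu\in P$, whence $\sigma(\chi)\equiv u(\chi)=\chi$; and since $\sigma$ preserves the marks it fixes $\delta$, so it descends to the quotient and the reduction of the opening paragraph goes through.

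I expect the main obstacle to be exactly the bookkeeping around $\bos_0$ and $\Sigma$: one must ensure that the error terms $\bos_0(\chi)-\chi$ and $\sigma(\chi)-\chi$, which lie in $I_\delta$, survive the division by $1-e(-\alpha_0)$ built into the Demazure operator. This is guaranteed by the fact that $e(\delta)$ is $\bos_0$--invariant, so that $D_0(I_\delta)\subseteq I_\delta$; granting this, everything else is a routine manipulation performed consistently in $\bz[\hat P]/I_\delta$.
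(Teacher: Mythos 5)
Your overall strategy is sound, and it is essentially the standard argument (note that the paper itself does not prove this lemma: it quotes it from Fourier--Littelmann, \cite[Lemmas 6 and 7]{FoL1}, so there is no in-paper proof to compare against). The reduction to single-operator statements, the exact computation for $1\le i\le n$, the observation that modulo $I_\delta$ translations act trivially so that $\bos_0$ acts on $\bz[P]$ as $s_\theta$ and $\sigma=t_\nu u\in\Sigma$ acts as $u\in W$, and the algebraic identity reducing the Leibniz rule at $i=0$ to the two congruences $\bos_0(\chi)\equiv\chi$ and $D_0(\chi)\equiv\chi$ --- all of this is correct.

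The gap is exactly at the point you yourself flag as the main obstacle, and your proposed patch does not close it. From $\bos_0(\chi)\equiv\chi$ and the relation $(1-e(-\alpha_0))D_0(\chi)=\chi-e(-\alpha_0)\bos_0(\chi)$ you obtain $(1-e(-\alpha_0))\bigl(D_0(\chi)-\chi\bigr)\in I_\delta$; to conclude $D_0(\chi)-\chi\in I_\delta$ you must cancel $1-e(-\alpha_0)$ \emph{in the quotient ring}, i.e.\ you need $1-e(-\alpha_0)$ to be a non-zero-divisor modulo $I_\delta$. The fact that $D_0(I_\delta)\subseteq I_\delta$ only gives a well-defined induced operator on $\bz[\hat P]/I_\delta$; it says nothing about whether that induced operator can be computed by ``dividing'' there. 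Indeed, the most one can squeeze out of $D_0(I_\delta)\subseteq I_\delta$ (applied to $\eta=\bos_0\chi-\chi$, together with the identity $D_0(f)+D_0(\bos_0 f)=f+\bos_0 f$) is $2\bigl(D_0(\chi)-\chi\bigr)\in I_\delta$, which still leaves a factor of $2$ to remove. The missing ingredient is cheap but must be stated: $\bz[\hat P]/I_\delta$ is isomorphic to the group ring of the free abelian group $\hat P/\bz\delta$, hence is an integral domain (in particular torsion-free), and $e(-\alpha_0)\equiv e(\theta)\not\equiv 1$ because $\theta\notin\bz\delta$; therefore $1-e(-\alpha_0)$ is a non-zero-divisor mod $I_\delta$ and the cancellation is legitimate. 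With that one sentence added your proof is complete; it also retroactively gives meaning to your phrase ``the formula for $D_0$ read modulo $I_\delta$'', which as written presupposes a division that is not defined in a general quotient ring.
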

 Along with Lemma \ref{chardl}, we get \begin{equation}\label{demwinv} D_w(e(\ell\Lambda_0)\ch_{\lie h} D(\ell,\lambda))\equiv D_w(e(\ell\Lambda_0))\ch_{\lie h} D(\ell,\lambda),\end{equation} for all $(\ell,\lambda)\in \bn\times P^+$ and $w\in\widetilde W$.

\subsection{} The following result  may be found in \cite[Theorem 3.5]{Kumar} and \cite[Theorem 8.2.9]{Kumar1}.
\begin{thm}\label{demcharacterformula}
For  $w\in\widehat W$, $\sigma\in\Sigma$, and $\Lambda\in\widehat{P}^+$ we have
$$ \ch_{\hat{\lie h}}V_w(\sigma{\Lambda}) = D_{w\sigma}(e(\Lambda)).$$\hfill\qedsymbol
\end{thm}

Lemma  \ref{chardl} and Theorem \ref{demcharacterformula} now gives,\begin{equation}\label{finally} D_{w\sigma}(e(\Lambda)) \equiv e(\ell\Lambda_0)\ch_{\lie h} D(\ell,\lambda),\end{equation} for all $\sigma\in \Sigma$ and  $w\in\widehat W$  such that $w\sigma\Lambda= w_0\lambda+\ell\Lambda_0$.

\subsection{} The next result makes crucial use of Proposition \ref{key}.\begin{lem}\label{keyuse}  Let $\ell\in\bn$ and $\lambda\in P^+$  be such that   $\lambda=\ell\lambda_1+\lambda_2$ where $\lambda_1\in L^+$ and $\lambda_2\in P^+$ satisfies $\lambda_2(h_i)\le d_i\ell$ for all $1\le i\le n$.
Then,  $$\ch_{\lie h}D(\ell, \lambda)=\ch_{\lie h}D(\ell,\ell\lambda_1)\ch_{\lie h} D(\ell,\lambda_2).$$
\end{lem}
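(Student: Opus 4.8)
The plan is to realize all three characters as images of Demazure operators modulo $I_\delta$, to exhibit the operator attached to $\lambda$ as a length-additive product of the operators attached to $\ell\lambda_1$ and $\lambda_2$, and then to use the $W$-invariance of $\ch_{\lie h}D(\ell,\lambda_2)$ (Lemma \ref{chardl}, Lemma \ref{winv}) to split the product. Set $\lambda_1^\ast=-w_0\lambda_1$; since $w_0L=L$ and $w_0(d_i\omega_i)=-d_{i^\ast}\omega_{i^\ast}$, we have $\lambda_1^\ast\in L^+$. Put $v_1=t_{-\lambda_1^\ast}=t_{w_0\lambda_1}$. The translation formulae \eqref{translation} give $v_1(\ell\Lambda_0)\equiv\ell\Lambda_0-\ell\lambda_1^\ast=\ell\Lambda_0+w_0(\ell\lambda_1)$, so by Proposition \ref{altdem} we have $V_{v_1}(\ell\Lambda_0)\cong D(\ell,\ell\lambda_1)$, whence \eqref{finally} yields
$$D_{v_1}(e(\ell\Lambda_0))\equiv e(\ell\Lambda_0)\ch_{\lie h}D(\ell,\ell\lambda_1).$$

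Next I would feed $\lambda_2$ into Proposition \ref{key} (legitimate since $\lambda_2(h_i)\le d_i\ell$), obtaining $\mu_2\in L^+$ and $w_2\in W$ with $\Lambda:=w_2t_{\mu_2}(\ell\Lambda_0+w_0\lambda_2)\in\hat{P}^+$. Setting $v_2=(w_2t_{\mu_2})^{-1}=t_{-\mu_2}w_2^{-1}$ gives $v_2\Lambda=\ell\Lambda_0+w_0\lambda_2$, so by \eqref{finally},
$$D_{v_2}(e(\Lambda))\equiv e(\ell\Lambda_0)\ch_{\lie h}D(\ell,\lambda_2).$$
I then form $v=v_1v_2=t_{-\lambda_1^\ast}t_{-\mu_2}w_2^{-1}$, and compute via \eqref{translation} that $v\Lambda=v_1(\ell\Lambda_0+w_0\lambda_2)\equiv\ell\Lambda_0+w_0(\ell\lambda_1)+w_0\lambda_2=\ell\Lambda_0+w_0\lambda$. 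The resulting $\delta$-shift is harmless: replacing $\Lambda$ by $\Lambda-c\delta\in\hat{P}^+$ makes the equality exact (as $\delta$ is $\widetilde W$-fixed), and in any case $e(\delta)\equiv1$. Hence $V_v(\Lambda)\cong D(\ell,\lambda)$ and $D_v(e(\Lambda))\equiv e(\ell\Lambda_0)\ch_{\lie h}D(\ell,\lambda)$.

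The hard part — and precisely the reason Proposition \ref{add} was established — is the operator identity $D_v=D_{v_1}D_{v_2}$, which holds once $\ell(v)=\ell(v_1)+\ell(v_2)$. Since $\lambda_1^\ast,\mu_2\in L^+\subseteq P^+$ and $w_2^{-1}\in W$, this is exactly Proposition \ref{add}(ii) applied to $t_{-\lambda_1^\ast}t_{-\mu_2}w_2^{-1}$, giving $\ell(t_{-\lambda_1^\ast}t_{-\mu_2}w_2^{-1})=\ell(t_{-\lambda_1^\ast})+\ell(t_{-\mu_2}w_2^{-1})$; the multiplicativity of Demazure operators along length-additive products (with the routine $\Sigma$-bookkeeping, under which $D_{w\sigma}$ is defined through $D_w$) then delivers $D_v=D_{v_1}D_{v_2}$.

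Granting this, I would chain the three displays. Writing $\chi=\ch_{\lie h}D(\ell,\lambda_2)$, which is $W$-invariant by Lemma \ref{chardl}, Lemma \ref{winv} gives
$$e(\ell\Lambda_0)\ch_{\lie h}D(\ell,\lambda)\equiv D_{v_1}(D_{v_2}(e(\Lambda)))\equiv D_{v_1}(e(\ell\Lambda_0)\chi)\equiv \chi\,D_{v_1}(e(\ell\Lambda_0))\equiv e(\ell\Lambda_0)\ch_{\lie h}D(\ell,\ell\lambda_1)\,\ch_{\lie h}D(\ell,\lambda_2).$$
Cancelling the unit $e(\ell\Lambda_0)$ and invoking the injectivity of $\bz[P]\hookrightarrow\bz[\hat P]/I_\delta$ recorded in Section \ref{weyl} then yields the asserted equality in $\bz[P]$. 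The only delicate points are the length computation above and the verification $\lambda_1^\ast\in L^+$; everything else is formal manipulation modulo $I_\delta$.
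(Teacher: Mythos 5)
Your proof is correct and follows essentially the same route as the paper's own: apply Proposition \ref{key} to $\lambda_2$, express all three characters via \eqref{finally}, factor the Demazure operator through the length additivity of Proposition \ref{add}(ii), pull out the $W$-invariant factor $\ch_{\lie h}D(\ell,\lambda_2)$ via Lemma \ref{winv}, and conclude by injectivity of $\bz[P]\to \bz[\hat P]/I_\delta$. Your explicit treatment of the $\delta$-shifts and the check that $-w_0\lambda_1\in L^+$ are details the paper passes over silently, but they do not change the structure of the argument.
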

\begin{pf} By Proposition \ref{key}  we can choose   $\nu\in L^+$ and $w\in W$ such that $$\Lambda=w^{-1}t_\nu(\ell\Lambda_0+w_0\lambda_2)\in \hat P^+.$$ Since $t_{w_0\lambda_1}t_{-\nu}w(\Lambda)=\ell\Lambda_0+w_0\lambda+m\delta$ for some $m\in\bz$, it follows from \eqref{finally} that $$e(\ell\Lambda_0){\ch}_{\lie h}D(\ell,\lambda) \equiv D_{t_{w_0\lambda_1}t_{-\nu}w}(e({\Lambda})).$$
 Proposition \ref{add} gives $$\ell(t_{w_0\lambda_1}t_{-\nu}w) =\ell(t_{w_0{\lambda_1}})+\ell(t_{-\nu}w),$$  and hence using the  properties of Demazure operators we get,
\begin{gather*}
  D_{t_{w_0\lambda_1}t_{-\nu}w}(e({\Lambda})) =D_{t_{w_0\lambda_1}}D_{t_{-\nu}w}(e({\Lambda})). \end{gather*} Using  \eqref{finally} we get $$  D_{t_{w_0\lambda_1}}D_{t_{-\nu}w}(e({\Lambda})) \equiv D_{t_{w_0\lambda_1}}(e({\ell\Lambda_0})\ch_{\lie h} D(\ell,\lambda_2)).$$  Using \eqref{demwinv} and a further application of \eqref{finally} gives, 
\begin{gather*}
 D_{t_{w_0\lambda_1}}(e({\ell\Lambda_0})\ch_{\lie h} D(\ell,\lambda_2))
 \equiv D_{t_{w_0\lambda_1}}(e({\ell\Lambda_0}))\ch_{\lie h} D(\ell,\lambda_2)\\
  \equiv e({\ell\Lambda_0})\ch_{\lie h} D(\ell,\ell\lambda_1)\ch_{\lie h} D(\ell,\lambda_2).\end{gather*} Hence we get $${\ch}_{\lie h}D(\ell,\lambda) \equiv \ch_{\lie h} D(\ell,\ell\lambda_1)\ch_{\lie h} D(\ell,\lambda_2)$$ and   the Lemma follows since the map $\bz[P]\to \bz[\hat P]/I_\delta$ is injective.
\end{pf}

\subsection{}
 Proposition \ref{demdimgen} follows if we prove that for all $\lambda\in P^+$  and $\mu\in L^+$, we have $$D(\ell,\ell\mu+\lambda)\cong_{\lie g} D(\ell,\ell\mu)\otimes D(\ell,\lambda).$$ Since finite--dimensional $\lie g$--modules are determined by their characters, it suffices to prove that $$\ch_{\lie h}D(\ell,\ell\mu+\lambda) = \ch_{\lie h}D(\ell,\ell\mu)\ch_{\lie h} D(\ell,\lambda).$$
Write $\lambda=\ell\lambda_1+\lambda_2$ where $\lambda_1\in L^+$ and $\lambda_2\in P^+$ satisfies $\lambda_2(h_i)<d_i\ell$ for all $1\le i\le n$. By Lemma \ref{keyuse}, we get \begin{gather*} D(\ell,\ell\mu+\lambda)\cong_{\lie g}  D(\ell,\ell\mu+\ell\lambda_1)\otimes D(\ell,\lambda_2)\\ \cong_{\lie g} D(\ell,\ell\mu)\otimes D(\ell,\ell\lambda_1)\otimes D(\ell,\lambda_2)\\ \cong_{\lie g} D(\ell,\ell\mu)\otimes D(\ell,\lambda),\end{gather*} where the second  and the the third isomorphisms are a further application of Lemma \ref{keyuse}.

\section{Proof of Theorem \ref{genqsystem}}\label{qsystem}
Throughout this section $\lie g$ is simply--laced and $i\in I$ is such that  $\omega_i(h_\alpha)\le 1$ for all $\alpha\in R^+$. In particular, this means that the multiplicity of $\alpha_i$ in any positive root is at most one. We also fix  $(\ell,\lambda)\in \bn\times P^+$ with  $\lambda(h_\alpha)\le \ell$ for all $\alpha\in R^+$, and write  $$(\ell\omega_i+\lambda)(h_{\alpha})=\ell (s_{\alpha,\ell}-1)+ m_{\alpha,\ell},\ \  0<m_{\alpha, \ell}\le \ell\ \ \alpha\in R^+.$$ For $\alpha=\sum_{j=1}^nr_j\alpha_j$, set $$\supp\alpha=\{j\in I: r_j>0\}.$$

\subsection{} 

\begin{prop}\label{refined}  The defining  relation, \eqref{demquot2}, of $D(\ell,\ell\omega_i+\lambda)$  is a consequence of  \eqref{locweyl}, \eqref{demquot1} and the single additional relation,  \begin{equation}\label{demquot3}  (x_{\alpha_i}^-\otimes t)^{\lambda{(h_i)}+1}w_{\ell\omega_i+\lambda }=0.\end{equation} \end{prop}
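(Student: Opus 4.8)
The goal is to show that the single relation \eqref{demquot3} together with \eqref{locweyl} and \eqref{demquot1} forces all the relations \eqref{demquot2}. Since $\omega_i(h_\alpha) \le 1$ for all $\alpha \in R^+$ and $\lambda(h_\alpha) \le \ell$, one computes the parameters $s_{\alpha,\ell}$ and $m_{\alpha,\ell}$ explicitly: because $(\ell\omega_i + \lambda)(h_\alpha) \le 2\ell$, we have $s_{\alpha,\ell} \in \{1,2\}$, and $s_{\alpha,\ell} = 2$ precisely when $i \in \supp\alpha$ and $(\ell\omega_i+\lambda)(h_\alpha) > \ell$. The plan is to partition $R^+$ according to these cases and to check that \eqref{demquot2} is non-trivial only when $s_{\alpha,\ell} = 2$, i.e. it reads $(x_\alpha^- \otimes t)^{m_{\alpha,\ell}+1} w = 0$ with $m_{\alpha,\ell} = \lambda(h_\alpha)$ (using $\omega_i(h_\alpha) = 1$). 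So the substance is to derive, for each such $\alpha$, the relation $(x_\alpha^- \otimes t)^{\lambda(h_\alpha)+1} w = 0$ from the hypotheses, with the case $\alpha = \alpha_i$ being exactly the given relation \eqref{demquot3}.

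\textbf{Key steps.} First I would reduce to the roots $\alpha$ with $i \in \supp\alpha$ and $(\ell\omega_i+\lambda)(h_\alpha) > \ell$, recording that for all other $\alpha$ the relation \eqref{demquot2} is either vacuous (when $m_{\alpha,\ell} = \ell = d_\alpha\ell$) or already a consequence of \eqref{locweyl} and \eqref{demquot1}. Second, I would run an induction on the height of such a root $\alpha$, the base case being $\alpha = \alpha_i$, which is precisely \eqref{demquot3}. For the inductive step, given $\alpha$ with $i \in \supp\alpha$ and $\het\alpha > 1$, I would write $\alpha = \beta + \alpha_j$ for a suitable simple root $\alpha_j$ with $\beta \in R^+$ still containing $i$ in its support, and apply $\ad(x_{\alpha_j}^+ \otimes 1)$ (or $x_{\alpha_j}^- \otimes 1$ as appropriate) to the known relation for $\beta$. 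The commutation $[x_{\alpha_j}^+, x_\beta^-] = \pm x_{\beta-\alpha_j}^-$ and the fact that $\lie n^+[t] w = 0$ let one propagate the power-of-$(x^-\otimes t)$ relation from $\beta$ to $\alpha$, with the exponent $\lambda(h_\alpha)+1$ tracked via the $\lie{sl}_2$-string combinatorics and the constraint that $\alpha_i$ occurs with multiplicity exactly one in $\alpha$.

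\textbf{Main obstacle.} The delicate point is controlling the degree in $t$: the generating relation \eqref{demquot3} involves $(x_{\alpha_i}^- \otimes t)$, and one must ensure that applying constant ($t^0$) raising/lowering operators from $\lie g$ never introduces a power of $(x^-_\alpha \otimes 1)$ (degree-zero) term that would weaken the conclusion, and conversely that the $t$-degree stays exactly $1$ on the relevant monomials. Because $\alpha_i$ has multiplicity one in every $\alpha$ under consideration, a single factor of $t$ is carried along, which is what makes the inductive propagation land on $(x_\alpha^- \otimes t)^{\lambda(h_\alpha)+1}$ rather than a higher-degree or mixed-degree expression; verifying this multiplicity-one bookkeeping carefully, together with checking that the PBW reordering modulo the ideal generated by \eqref{locweyl} and \eqref{demquot1} does not produce obstructing terms, is where the real work lies. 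I expect the argument to mirror the standard passage from fundamental to general Demazure relations, so the representation-theoretic $\lie{sl}_2$ computations along each root string should close the induction.
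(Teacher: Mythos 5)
Your reduction and overall skeleton match the paper's proof: the only substantive case is $\omega_i(h_\alpha)=1$, $\alpha\ne\alpha_i$, $\lambda(h_\alpha)>0$ (so that $s_{\alpha,\ell}=2$, $m_{\alpha,\ell}=\lambda(h_\alpha)$), and the paper also argues by induction on $\Ht\alpha$ with base case \eqref{demquot3}; your insistence on a decomposition $\alpha=\beta+\alpha_j$ with $\alpha_j$ simple and $i\in\supp\beta$ is harmless (such a $j$ always exists, and the paper allows any splitting $\alpha=\beta+\gamma$ with $i\notin\supp\gamma$). The gap is in the mechanism of the inductive step. You propose to \emph{push forward} the known relation $(x_\beta^-\otimes t)^{\lambda(h_\beta)+1}w=0$ by applying $\ad(x_{\alpha_j}^{\pm}\otimes 1)$. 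This cannot work as stated. First, $[x_{\alpha_j}^+,x_\beta^-]=\pm x_{\beta-\alpha_j}^-$ moves you to a \emph{smaller} root, away from $\alpha=\beta+\alpha_j$; the operator producing $x_\alpha^-$ is $\ad(x_{\alpha_j}^-)$. Second, and more seriously, applying powers of $x_{\alpha_j}^-\otimes 1$ to the vanishing vector and expanding (using $[x_{\alpha_j}^-,x_\beta^-\otimes t]=\pm x_\alpha^-\otimes t$ and the fact that $x_\alpha^-$ commutes with both $x_\beta^-$ and $x_{\alpha_j}^-$ in the simply--laced case), then killing unwanted terms via $(x_{\alpha_j}^-)^{\lambda(h_j)+1}w=0$, yields
$$(x_\alpha^-\otimes t)^{\lambda(h_\beta)+1}(x_{\alpha_j}^-)^{\lambda(h_j)}w=0,$$
which is \emph{not} the target relation $(x_\alpha^-\otimes t)^{\lambda(h_\alpha)+1}w=0$: the exponent stays at $\lambda(h_\beta)+1$ and an unwanted factor $(x_{\alpha_j}^-)^{\lambda(h_j)}$ is stuck on the right. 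No amount of $t$--degree or PBW bookkeeping (your stated main obstacle) repairs this; the obstruction is the direction of the argument, since the known vanishing holds only when applied to $w$ itself, not to vectors like $(x_{\alpha_j}^-)^sw$.

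The paper's argument runs the other way, and its key ingredient is absent from your proposal. Suppose $(x_\alpha^-\otimes t)^{\lambda(h_\alpha)+1}w\ne 0$. The weight computation
$$\bigl(\ell\omega_i+\lambda-(\lambda(h_\alpha)+1)\alpha\bigr)(h_\gamma)=-\lambda(h_\beta)-1<0$$
(using $i\notin\supp\gamma$), together with finite--dimensionality of the module (Remark \ref{locfindim}) and $\lie{sl}_2$--theory for $x_\gamma^{\pm},h_\gamma$, shows that $(x_\gamma^+)^{\lambda(h_\beta)+1}$ acts injectively on a non-zero vector of that weight, so $(x_\gamma^+)^{\lambda(h_\beta)+1}(x_\alpha^-\otimes t)^{\lambda(h_\alpha)+1}w\ne0$. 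On the other hand, the commutations $[x_\gamma^+,x_\alpha^-]=Ax_\beta^-$, $[x_\alpha^-,x_\beta^-]=0$, $[x_\beta^-,x_\gamma^+]=0$ and $\lie n^+[t]w=0$ show that this vector is a non-zero multiple of $(x_\alpha^-\otimes t)^{\lambda(h_\gamma)}(x_\beta^-\otimes t)^{\lambda(h_\beta)+1}w$, which vanishes by the inductive hypothesis --- a contradiction. This injectivity of a power of the raising operator on a negative weight space is what lets one trade the unknown relation for $\alpha$ against the known relation for $\beta$; without it your induction does not close.
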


\begin{pf} A  simple calculation shows that either $s_{\alpha_i,\ell}=1$ and $\lambda(h_i)=0$ or  $s_{\alpha_i,\ell}=2$ and $m_{\alpha_i,\ell }=\lambda(h_i)$. In the first case, the relation \eqref{demquot1} and in the second case the relation \eqref{demquot2} shows that  the relation \eqref{demquot3} does hold in $D(\ell,{\ell\omega_i+\lambda })$.

 If $\omega_i(h_\alpha)=0$, then $s_{\alpha,\ell}=1$ and $m_{\alpha,\ell}=(\ell\omega_i+\lambda )(h_\alpha)=\lambda(h_\alpha)$.  For such $\alpha$ the relation \eqref{demquot2} is $(x^-_\alpha\otimes 1)^{(\ell\omega_i+\lambda )(h_\alpha)+1}w_{\ell\omega_i+\lambda}=0$ which is
the content of Remark \ref{locfindim}.
 It remains to consider the case when $\omega_i(h_\alpha)=1$ and $\alpha\ne\alpha_i$.  If $\lambda(h_\alpha)=0$, then $m_{\alpha,\ell}=\ell$ and there is nothing to check.  Otherwise, $\lambda(h_\alpha)>0$ and 
$s_{\alpha,\ell}=2$,  $m_{\alpha,\ell}=\lambda(h_\alpha)$.
We proceed by induction on $\Ht\alpha$ with induction obviously beginning with $\alpha=\alpha_i$.  Writing $\alpha=\beta+\gamma$ for some positive roots $\beta$ and $\gamma$, we assume without loss of generality that $i\notin\supp\gamma$. Since $\alpha(h_\alpha)=2$, and we are in the simply laced case, it follows that $$(\alpha, \beta)= (\alpha,\gamma) =1,\ \ \beta-\gamma \notin R,\ \  \beta+\alpha\notin R.$$
 By the inductive hypotheses we have \begin{equation}\label {inductivehpo}(x^-_\beta\otimes t)^{\lambda(h_\beta)+1}w_{{\ell\omega_i+\lambda }}=0.\end{equation} Suppose for a contradiction that $$ (x^-_\alpha\otimes t)^{\lambda(h_\alpha)+1}w_{\ell\omega_i+\lambda } \ne 0.$$  Since $$(\ell\omega_i+\lambda-(\lambda(h_\alpha)+1)\alpha)(h_\gamma)= (\lambda-(\lambda(h_\alpha)+1)\alpha)(h_\gamma)=-\lambda(h_\beta)-1<0,$$  we get by applying the representation theory of $\lie{sl}_2$ to $x^\pm_\gamma , h_\gamma$ that $$(x^+_\gamma)^{\lambda(h_\beta)+1}(x^-_\alpha\otimes t)^{\lambda(h_\alpha)+1}w_{\ell\omega_i+\lambda } \ne 0.$$ Since $$[x_\gamma^+, x^-_\alpha]= Ax^-_\beta,\ \  [x^-_\alpha,x^-_\beta]=0\ \  [x_\beta^-, x_\gamma^+]=0,$$ for some non--zero constant $A$,  it follows by using the first two relations in  \eqref{locweyl} that  $$(x^-_\alpha\otimes t)^{\lambda(h_\gamma)}(x^-_\beta\otimes t)^{\lambda(h_\beta)+1}w_{\ell\omega_i+\lambda } \ne 0,$$which contradicts \eqref{inductivehpo} and  completes the proof.

 \end{pf}

\subsection{} We now prove,
\begin{lem} Suppose that $\lambda(h_i)>0$ and $(\ell,\lambda)\in \bn\times P^+$.  There exists a surjective map of graded $\lie g[t]$--modules $$\pi: D(\ell, \ell\omega_i+\lambda)\to D(\ell+1, \ell\omega_i+\lambda)\to  0,$$ with $$\ker\pi=\bu(\lie g[t])(x_{\alpha_i}^-\otimes t)^{\lambda(h_i)}w_{ \ell\omega_i+\lambda }.$$
\end{lem}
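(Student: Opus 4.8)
The plan is to realize $D(\ell+1,\ell\omega_i+\lambda)$ as the quotient of $D(\ell,\ell\omega_i+\lambda)$ by the submodule $K:=\bu(\lie g[t])(x_{\alpha_i}^-\otimes t)^{\lambda(h_i)}w_{\ell\omega_i+\lambda}$. First I would invoke Lemma~\ref{difflevel} (with $\lambda$ there replaced by $\ell\omega_i+\lambda$) to produce the surjection $\pi$, which by construction sends $w_{\ell\omega_i+\lambda,\ell}$ to $w_{\ell\omega_i+\lambda,\ell+1}$. Writing $(\ell\omega_i+\lambda)(h_{\alpha_i})=\ell+\lambda(h_i)$ and using $0<\lambda(h_i)\le\ell$, a one-line computation of the integers $s_{\alpha_i,\ell+1},m_{\alpha_i,\ell+1}$ shows that $(x_{\alpha_i}^-\otimes t)^{\lambda(h_i)}w_{\ell\omega_i+\lambda,\ell+1}=0$ in $D(\ell+1,\ell\omega_i+\lambda)$: it is relation \eqref{demquot2} when $\lambda(h_i)>1$ and relation \eqref{demquot1} when $\lambda(h_i)=1$. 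Hence $\pi(K)=0$, so $K\subseteq\ker\pi$ and $\pi$ descends to a surjection $\bar\pi\colon D(\ell,\ell\omega_i+\lambda)/K\to D(\ell+1,\ell\omega_i+\lambda)$.

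To prove $\ker\pi=K$ it then suffices to construct a surjection in the reverse direction, i.e.\ to show that the image $\ti w$ of $w_{\ell\omega_i+\lambda,\ell}$ in $Q:=D(\ell,\ell\omega_i+\lambda)/K$ satisfies all the defining relations of $D(\ell+1,\ell\omega_i+\lambda)$; the two maps are then mutually inverse, since each fixes the cyclic generator, forcing $\bar\pi$ to be an isomorphism. The relations \eqref{locweyl} are independent of the level. Comparing $(s_{\alpha,\ell},m_{\alpha,\ell})$ with $(s_{\alpha,\ell+1},m_{\alpha,\ell+1})$ exactly as in the proof of Lemma~\ref{difflevel}, and using $\omega_i(h_\alpha)\in\{0,1\}$ together with $\lambda(h_\alpha)\le\ell$, one checks that for every $\alpha$ the level-$(\ell+1)$ instances of \eqref{demquot1} and \eqref{demquot2} either coincide with the corresponding level-$\ell$ relations (already valid in $Q$) or follow from Remark~\ref{locfindim}, with the sole exception of the relations $(x_\alpha^-\otimes t)^{\lambda(h_\alpha)}\ti w=0$ for those $\alpha$ with $\omega_i(h_\alpha)=1$ and $\lambda(h_\alpha)>0$. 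Thus everything reduces to establishing this family of relations in $Q$.

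I would prove $(x_\alpha^-\otimes t)^{\lambda(h_\alpha)}\ti w=0$ (for $\omega_i(h_\alpha)=1$, $\lambda(h_\alpha)>0$) by induction on $\Ht\alpha$, repeating the argument of Proposition~\ref{refined} verbatim but with every exponent lowered by one. The base case $\alpha=\alpha_i$ is precisely the generating relation of $K$. For the inductive step write $\alpha=\beta+\gamma$ with $i\notin\supp\gamma$, so that $i\in\supp\beta$ and $(\alpha,\beta)=(\alpha,\gamma)=1$, $\beta-\gamma\notin R$, $\alpha+\beta\notin R$. Assuming $(x_\alpha^-\otimes t)^{\lambda(h_\alpha)}\ti w\neq0$, its $h_\gamma$-weight is $-\lambda(h_\beta)$, so applying $(x_\gamma^+)^{\lambda(h_\beta)}$ and using the brackets $[x_\gamma^+,x_\alpha^-]=Ax_\beta^-$ (with $A\ne 0$), $[x_\alpha^-,x_\beta^-]=0$, $[x_\beta^-,x_\gamma^+]=0$ produces a nonzero multiple of $(x_\alpha^-\otimes t)^{\lambda(h_\gamma)}(x_\beta^-\otimes t)^{\lambda(h_\beta)}\ti w$, contradicting the inductive hypothesis $(x_\beta^-\otimes t)^{\lambda(h_\beta)}\ti w=0$.

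The one step that genuinely needs the hypothesis $\lambda(h_i)>0$—and which I expect to be the crux—is guaranteeing $\lambda(h_\beta)>0$, so that the exponent in the $\lie{sl}_2$ step is strictly positive (this is the only difference from Proposition~\ref{refined}, where the exponent was $\lambda(h_\beta)+1$ and hence automatically positive, so no analogous condition was needed). Since $\lambda$ is dominant and $\beta=\sum_j n_j\alpha_j$ with $n_i\ge 1$ (because $i\in\supp\beta$), one has $\lambda(h_\beta)=\sum_j n_j\lambda(h_{\alpha_j})\ge\lambda(h_i)>0$; hence the degenerate case never occurs and the induction runs cleanly. Assembling the pieces yields the reverse surjection, so $\bar\pi$ is an isomorphism and $\ker\pi=K$.
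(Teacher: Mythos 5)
Your proposal is correct, and its top-level skeleton (get $\pi$ from Lemma~\ref{difflevel}, check the generator of $K$ dies at level $\ell+1$, then show the cyclic generator of $D(\ell,\ell\omega_i+\lambda)/K$ satisfies the level-$(\ell+1)$ defining relations, with an induction on $\Ht\alpha$ as the crux) matches the paper. Where you genuinely diverge is in how the kernel containment $\ker\pi\subseteq K$ is organized. The paper first applies Proposition~\ref{refined} to $D(\ell+1,\ell\omega_i+\lambda)=D(\ell+1,(\ell+1)\omega_i+(\lambda-\omega_i))$ (this is where $\lambda(h_i)>0$ enters for them, via $\lambda-\omega_i\in P^+$), so it only has to verify \eqref{demquot1} at level $\ell+1$ plus the single relation \eqref{demquot3}; the sole nontrivial family is then $\lambda(h_\alpha)=\omega_i(h_\alpha)=1$, where $(\ell\omega_i+\lambda)(h_\gamma)=0$, and the inductive step is the one-line identity $(x_\alpha^-\otimes t)w = \text{const}\cdot x_\gamma^-(x_\beta^-\otimes t)w$, which stays inside the submodule $K$ because $x_\gamma^-w=0$. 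You instead verify \eqref{demquot2} as well, which enlarges the exceptional family to all $\alpha$ with $\omega_i(h_\alpha)=1$, $\lambda(h_\alpha)\ge 1$, and you handle it by replaying the raising-operator/$\lie{sl}_2$ argument of Proposition~\ref{refined} in the quotient with every exponent lowered by one; note the paper's lowering-operator trick would not work in your larger family, since $\lambda(h_\gamma)$ need not vanish when $\lambda(h_\alpha)\ge 2$, so your switch to $x_\gamma^+$ is forced and your identification of $\lambda(h_\beta)\ge\lambda(h_i)>0$ as the point where positivity of $\lambda(h_i)$ is used is exactly right. The trade-off: the paper's route is shorter because Proposition~\ref{refined} absorbs the \eqref{demquot2}-type relations once and for all, while yours is self-contained in that it never cites Proposition~\ref{refined} for the target module --- in effect you re-prove its level-$(\ell+1)$ instance inside the quotient. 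Both arguments are complete and correct under the section's standing hypotheses ($\lie g$ simply-laced, $\omega_i(h_\alpha)\le 1$, $\lambda(h_\alpha)\le\ell$), which you implicitly and legitimately use, e.g.\ when computing $s_{\alpha_i,\ell+1}$, $m_{\alpha_i,\ell+1}$ and when decomposing $\alpha=\beta+\gamma$ with $i\notin\supp\gamma$.
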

\begin{pf} The existence of a non--zero map $\pi: D(\ell, \ell\omega_i+\lambda)\to D(\ell+1, \ell\omega_i+\lambda)\to  0$, is guaranteed by Lemma \ref{difflevel}.  Since $\ell\omega_i+\lambda=(\ell+1)\omega_i+(\lambda-\omega_i)$ and $\lambda-\omega_i\in P^+$, it follows that Proposition \ref{refined} applies to both  $D(\ell, \ell\omega_i+\lambda)$ and to $ D(\ell+1, \ell\omega_i+\lambda)$. In particular, \eqref{demquot3} shows that$$(x_{\alpha_i}^-\otimes t)^{\lambda(h_i)}w_{ \ell\omega_i+\lambda }\in\ker\pi.$$   To prove that it generates the kernel, notice first that $w_{ \ell\omega_i+\lambda }$ and $\pi(w_{ \ell\omega_i+\lambda })$ both satisfy all the relations in \eqref{locweyl}. The Lemma follows if we  prove that $(x^-_\alpha\otimes t^{s_{\alpha,\ell}})w_{ \ell\omega_i+\lambda }$  is in the $\lie g[t]$---submodule of $D(\ell, { \ell\omega_i+\lambda })$ generated by $(x^-_{\alpha_i}\otimes t)^{\lambda(h_i)}w_{ \ell\omega_i+\lambda }$, where  $${( \ell\omega_i+\lambda )}(h_\alpha)=\ell(s_{\alpha,\ell}-1)+m_{\alpha,\ell}= (\ell+1)(s_{\alpha,\ell+1}-1)+m_{\alpha,\ell+1}.$$   If $i\notin\supp\alpha$, then $s_{\alpha,\ell}=s_{\alpha,\ell+1}=1$ and so  $(x^-_\alpha\otimes t^{s_{\alpha,\ell+1}})w_{\ell\omega_i+\lambda}=0$ and there is nothing to prove.  If $i\in\supp\alpha$  and $\lambda(h_\alpha)>1$ then $(\lambda-\omega_i)(h_\alpha)>0$ and so  $s_{\alpha,\ell}=s_{\alpha,\ell+1}=2$ and we are done. It remains to consider the case when $\lambda(h_\alpha)=\omega_i(h_\alpha)=1$. In this case \begin{equation}\label{step}s_{\alpha,\ell}=2, \ \ m_{\alpha,\ell}=1, \ \ s_{\alpha,\ell+1}=1, \ \ m_{\alpha,\ell+1}=\ell+1\end{equation} and the only thing to check is that  $(x^-_\alpha\otimes t)w_{ \ell\omega_i+\lambda } $ is in the $\lie g[t]$--submodule of $D(\ell, { \ell\omega_i+\lambda })$ generated by $(x^-_{\alpha_i}\otimes t)w_{ \ell\omega_i+\lambda }$. For this we proceed by induction on $\Ht\alpha$. If $\Ht\alpha=1$, then $\alpha=\alpha_i$ and hence induction begins.
Write $\alpha=\beta+\gamma$ with $i\in\supp\beta$ in which case $i\notin\supp\gamma$. Notice that  $$\lambda(h_\alpha)=1\implies  \lambda(h_\beta)=1,\ \ (\ell\omega_i+\lambda) (h_\gamma)=0.$$ Hence using the induction hypothesis for $\beta$ and the third equality in \eqref{locweyl} for $\gamma$, we get
$$(x^-_\alpha\otimes  t)w_{\ell\omega_i+\lambda} =x^-_\gamma(x^-_\beta \otimes t)w_{ \ell\omega_i+\lambda } \in  \bu(\lie g[t])(x^-_{\alpha_i}\otimes t)w_{{ \ell\omega_i+\lambda }}. $$   This completes the proof of the Lemma.

\end{pf}

\subsection{} The following Lemma now clearly completes the proof of Theorem \ref{genqsystem}.
\begin{lem} Suppose that $\lambda(h_i)>0$ and $(\ell,\lambda)\in \bn\times P^+$ and let  $\mu= \ell\omega_i+\lambda-\lambda(h_i)\alpha_i$.
The assignment $w_\mu\to (x^-_i\otimes t)^{\lambda(h_i)}w_{\lambda + \ell\omega_i}$ defines an injective map of $\lie g[t]$--modules  $$\iota: \tau_{\lambda(h_i)}^* D(\ell,\mu)\to D(\ell,{\lambda + \ell\omega_i}).$$
\end{lem}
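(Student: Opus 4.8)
The goal is to show that the assignment $w_\mu \mapsto (x_i^-\otimes t)^{\lambda(h_i)} w_{\ell\omega_i+\lambda}$ extends to an injective map $\iota\colon \tau^*_{\lambda(h_i)} D(\ell,\mu)\to D(\ell,\ell\omega_i+\lambda)$ of graded $\lie g[t]$-modules. My plan is to split this into two parts: (1) showing $\iota$ is well-defined, i.e. that the target vector $v:=(x_i^-\otimes t)^{\lambda(h_i)}w_{\ell\omega_i+\lambda}$ satisfies all the defining relations of $w_\mu$ in $D(\ell,\mu)$ (appropriately graded-shifted, since $v$ sits in degree $\lambda(h_i)$, matching the $\tau^*_{\lambda(h_i)}$ shift), and (2) establishing injectivity via a dimension/character count.

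\textbf{Well-definedness.} First I would record that $v$ has the correct weight $\mu=\ell\omega_i+\lambda-\lambda(h_i)\alpha_i$ and grade $\lambda(h_i)$. The main work is verifying the generating relations \eqref{locweyl}, \eqref{demquot1}, \eqref{demquot2} for $D(\ell,\mu)$ on $v$. The highest-weight-type relation $\lie n^+[t]v=0$ and the Cartan relations in \eqref{locweyl} should follow from a direct $\lie{sl}_2$-computation: since $w_{\ell\omega_i+\lambda}$ is annihilated by $\lie n^+[t]$ and $\mu(h_i)=\lambda(h_i)-2\lambda(h_i)+\ell = \ell-\lambda(h_i)\ge 0$, applying raising operators to $(x_i^-\otimes t)^{\lambda(h_i)}w_{\ell\omega_i+\lambda}$ and commuting them through reduces to the relations already known to hold. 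I would handle $x^+_{\alpha_i}\otimes t^s$ and $x^+_{\alpha_j}\otimes t^s$ for $j\ne i$ separately, using that $[x^+_{\alpha_j},x_i^-]=0$ for $j\ne i$ (as $\alpha_j-\alpha_i\notin R^+$ once $j\ne i$) so those raising operators pass straight through. For the relations \eqref{demquot1} and \eqref{demquot2} of $D(\ell,\mu)$, I would compute $s_{\alpha,\ell}$ and $m_{\alpha,\ell}$ for the weight $\mu$ versus those for $\ell\omega_i+\lambda$, and show the required monomials in $x^-_\alpha\otimes t^r$ kill $v$ inside $D(\ell,\ell\omega_i+\lambda)$; the key input is Proposition \ref{refined}, which tells us the level-$\ell$ relations for $\ell\omega_i+\lambda$ are governed by \eqref{locweyl}, \eqref{demquot1} and the single relation \eqref{demquot3}, namely $(x_i^-\otimes t)^{\lambda(h_i)+1}w_{\ell\omega_i+\lambda}=0$. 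That relation $(x_i^-\otimes t)v=0$ is exactly what forces the new highest weight to behave correctly.

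\textbf{Injectivity.} Once $\iota$ is a well-defined nonzero map of graded $\lie g[t]$-modules, injectivity is most cleanly obtained by comparing dimensions, using the short exact sequence packaging already set up. The preceding Lemma gives a surjection $\pi\colon D(\ell,\ell\omega_i+\lambda)\to D(\ell+1,\ell\omega_i+\lambda)\to 0$ whose kernel is exactly $\bu(\lie g[t])\,v$, the submodule generated by $v$ — which is precisely the image of $\iota$. So $\Im\iota\cong \ker\pi$ and $\dim\ker\pi=\dim D(\ell,\ell\omega_i+\lambda)-\dim D(\ell+1,\ell\omega_i+\lambda)$. On the other hand, $\iota$ surjects $\tau^*_{\lambda(h_i)}D(\ell,\mu)$ onto $\Im\iota$, so it suffices to show $\dim D(\ell,\mu)\le \dim\ker\pi$, i.e. $\dim D(\ell,\mu)\le \dim D(\ell,\ell\omega_i+\lambda)-\dim D(\ell+1,\ell\omega_i+\lambda)$; combined with surjectivity onto the image this forces $\iota$ to be injective. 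Here I would invoke Theorem \ref{mainthm} / Proposition \ref{demdimgen} to factor all three dimensions through products of prime Demazure dimensions: writing $\mu=\ell\nu^1+\nu^0$ and rewriting $\ell\omega_i+\lambda=(\ell+1)\omega_i+(\lambda-\omega_i)$, the dimensions become products of $\dim D(\ell,\ell\omega_j)$-type factors, and the desired inequality (in fact equality) reduces to a numerical identity among these prime-factor dimensions.

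\textbf{Main obstacle.} I expect the genuine difficulty to be the dimension identity in the injectivity step: matching $\dim D(\ell,\mu)$ with $\dim D(\ell,\ell\omega_i+\lambda)-\dim D(\ell+1,\ell\omega_i+\lambda)$ requires more than the multiplicative factorization of Proposition \ref{demdimgen} — it needs a genuine arithmetic relation among Kirillov–Reshetikhin-type Demazure dimensions, presumably the numerical shadow of the $Q$-system short exact sequence itself. The well-definedness step, by contrast, I expect to be a bookkeeping exercise in $\lie{sl}_2$-representation theory once Proposition \ref{refined} is in hand; the only subtle point there is correctly tracking the graded shift $\tau^*_{\lambda(h_i)}$ so that $\iota$ is a degree-zero morphism.
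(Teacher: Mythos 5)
Your proposal has a genuine gap, and it sits exactly where you yourself suspect: the injectivity step is circular. Once $\iota$ is well defined, its image is $\bu(\lie g[t])(x_i^-\otimes t)^{\lambda(h_i)}w_{\ell\omega_i+\lambda}=\ker\pi$, so injectivity amounts to the inequality
$$\dim D(\ell,\mu)\ \le\ \dim D(\ell,\ell\omega_i+\lambda)-\dim D(\ell+1,\ell\omega_i+\lambda).$$
But Theorem \ref{mainthm} and Proposition \ref{demdimgen} give only \emph{multiplicative} factorizations; applying them (together with \eqref{demev}, since $\lambda(h_\alpha)\le\ell$ throughout this section) turns the required inequality into
$$\dim D(\ell,\ell\nu^1)\dim D(\ell,\nu^0)\ \le\ \dim D(\ell,\ell\omega_i)\dim V(\lambda)-\dim D(\ell+1,(\ell+1)\omega_i)\dim V(\lambda-\omega_i),$$
where $\mu=\ell\nu^1+\nu^0$. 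This is precisely the Euler-characteristic identity encoded by the short exact sequence of Theorem \ref{genqsystem} --- the very statement this lemma is needed to prove. No independent ``arithmetic relation among KR-type Demazure dimensions'' is available in the paper to supply it: that identity is an \emph{output} of this lemma, not an input, so deferring to it does not close the argument. A secondary, lesser issue: your well-definedness step is also not routine. Proposition \ref{refined} constrains the defining relations of the \emph{module} $D(\ell,\ell\omega_i+\lambda)$; it says nothing about which relations the \emph{vector} $v=(x_i^-\otimes t)^{\lambda(h_i)}w_{\ell\omega_i+\lambda}$ satisfies. Verifying \eqref{demquot1} and \eqref{demquot2} for $D(\ell,\mu)$ on $v$ at a general root $\alpha$ forces you to commute $x^-_\alpha\otimes t^s$ past $\lambda(h_i)$ factors of $x_i^-\otimes t$, producing terms in $x^-_{\alpha+\alpha_i}\otimes t^{s+1}$ and so on; this is substantive work that your sketch does not carry out.

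The paper sidesteps both points at once by changing the ambient picture. Choose $\Lambda\in\hat P^+$ and $w\in\widehat W$ with $w\Lambda=w_0(\ell\omega_i+\lambda)+\ell\Lambda_0$; then Proposition \ref{altdem} gives $D(\ell,\ell\omega_i+\lambda)\cong V_w(\Lambda)$, with $w_{\ell\omega_i+\lambda}$ corresponding to the extremal vector $v_{w_0w\Lambda}$. Since $(w_0w\Lambda,-\alpha_i+\delta)=-(\lambda,\alpha_i)<0$, the $\lie{sl}_2$ attached to the real root $-\alpha_i+\delta$ (whose positive root vector is $x_i^-\otimes t$) shows that $(x_i^-\otimes t)^{\lambda(h_i)}v_{w_0w\Lambda}$ is a nonzero extremal vector of weight $\bos_{\alpha_i-\delta}w_0w\Lambda$. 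Because $V_w(\Lambda)$ is $\lie g$-stable, the extremal weight space $V(\Lambda)_{w_0\bos_{\alpha_i-\delta}w_0w\Lambda}$ lies in $V_w(\Lambda)$, giving an inclusion of Demazure modules $V_{w_0\bos_{\alpha_i-\delta}w_0w}(\Lambda)\hookrightarrow V_w(\Lambda)$, and a short weight calculation with Proposition \ref{altdem} identifies this inclusion with $\iota:\tau^*_{\lambda(h_i)}D(\ell,\mu)\to D(\ell,\ell\omega_i+\lambda)$. In that realization well-definedness and injectivity are automatic --- it is literally an inclusion of submodules of $V(\Lambda)$ --- and the dimension identity you were missing then falls out as a corollary rather than being assumed.
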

\begin{pf}  Choose $\Lambda\in\hat P^+$ such that $w\Lambda=w_0(\ell\omega_i+\lambda)+\ell\Lambda_0$ for some $w\in\widehat W$.  Then,  $$D(\ell,\ell\omega_i+\lambda)\cong_{\lie g[t]} V_w(\Lambda).$$  The element $w_{\ell\omega_i+\lambda}$ maps to a non--zero  element
$v_{w_0w\Lambda}\in (V_w(\Lambda))_{w_0w\Lambda}.$ Since
 $$(w_0w\Lambda, -\alpha_i+\delta)= (\ell\omega_i+\lambda+\ell\Lambda_0,-\alpha_i+\delta)=-(\lambda,\alpha_i)<0,$$ it follows from the representation theory of the $\lie{sl}_2$ associated to the root $-\alpha_i+\delta$ that $$0\ne (x_i^-\otimes t)^{\lambda(h_i)}v_{w_0w\Lambda}\in V_w(\Lambda)_{\bos_{\alpha_i-\delta}w_0w\Lambda},$$ where $\bos_{\alpha_i-\delta}$ is the reflection in $\widehat W$ corresponding to the root $\alpha_i-\delta$. In particular,$$ (x_i^-\otimes t)^{\lambda(h_i)}w_{\ell\omega_i+\lambda}\ne 0.$$
 Since $V_w(\Lambda)$ is a $\lie g$--stable Demazure module, it follows that the $\lie g$--module through $(x_i^-\otimes t)^{\lambda(h_i)}v_{w_0w\Lambda}$ is contained in it and hence we get that $$V(\Lambda)_{w_0\bos_{\alpha_i-\delta}w_0w\Lambda}\subset V_w(\Lambda).$$ This means that we have an inclusion of Demazure modules  $V_{w_0\bos_{\alpha_i-\delta}w_0w\Lambda}(\Lambda)\hookrightarrow V_w(\Lambda)$. A straightforward calculation now shows that $$V_{w_0\bos_{\alpha_i-\delta}w_0w\Lambda}(\Lambda)\cong_{\lie g[t]} \tau_{\lambda(h_i)}^*D(\ell, \mu)$$
 which completes the proof.

\end{pf}

\section{Proof of Proposition \ref{primefactor}}\label{prime}
To prove Proposition \ref{primefactor} we must show that if $(\ell,\lambda)\in \bn\times P^+$ is such that $\lambda(h_i)\le \ell$, then $D(\ell,\lambda)$ is prime. We shall prove this in the rest of the section {\em assuming that $\lie g$ is simply--laced, including the algebras of type $E$.}

\subsection{} The first step in proving Proposition \ref{primefactor} is,
\begin{lem}\label{step1} Let $V$ be a finite--dimensional  $\lie g$--module such that: $$\dim V_\lambda=1, \qquad \wt V\subset\lambda-Q^+.$$ Suppose that $V\cong V_1\otimes V_2$, where $V_j$, $j=1,2$ are non--trivial finite--dimensional $\lie g$--modules. There exists a  unique pair of non--zero elements $\mu_j\in \wt V_j\cap P^+$ such that $$\mu_1+\mu_2=\lambda,\qquad \dim\Hom_{\lie g}(V(\mu_j), V_j)=1,$$ and an injective map $V(\mu_1)\otimes V(\mu_2)\to V$ of $\lie g$--modules.
\end{lem}
\begin{pf} The existence of $\mu_j\in\wt V_j$, $j=1,2$, such that $\mu_1+\mu_2=\lambda$ is a consequence of the fact that $\dim V_\lambda>0$ while the uniqueness of these elements is a consequence of the fact that $\dim V_\lambda=1$. Notice that this also proves that $\dim (V_j)_{\mu_j}=1$ for $j=1,2$. Since $\wt V\subset \lambda-Q^+$ we get  $\wt V_j\subset\mu_j-Q^+$ and hence $$\dim\Hom_{\lie g}(V(\mu_j), V_j)=1, \ \  j=1,2.$$ If $\mu_1=0$ then the argument proves that $V_1$ is the one--dimensional trivial representation of $\lie g$ contradicting our assumptions. This completes the proof of the Lemma.
\end{pf}

\subsection{}\label{convention}  For the rest of the section we fix $(\ell,\lambda)\in \bn\times P^+$ and an isomorphism  $$D(\ell,\lambda)\cong_{\lie g} V_1\otimes V_2,$$ 
for some finite--dimensional $\lie g$--modules $V_1$ and $V_2$.
Since $D(\ell,\lambda)$ satisfies the conditions of Lemma \ref{step1} we choose $\mu_1$ and $\mu_2$ as in Lemma \ref{step1} and
 Proposition \ref{primefactor} follows if we prove  that  either $\mu_1=0$ or $\mu_2=0$.

\subsection{} We need some additional notation.  Given any connected subset $J\subset\{1,\cdots, n\}$ of the Dynkin diagram of $\lie g$,  set
 $$R^+_J=R^+\cap\sum_{j\in J}\bz\alpha_j,\ \ \ P^+_J= P^+\cap\sum_{j\in J}\bz\omega_j, \ \ \ Q^+_J= Q^+\cap\sum_{j\in J}\bz\alpha_j.$$

Let $\lie g_J$ be the subalgebra of $\lie g$ generated by the elements $x_i^\pm$, $i\in  J$ and let  $\lie n^\pm_J$, $\lie h_J$ be  defined in the obvious way.  Then $R^+_J$ is the set of positive roots of $\lie g_J$ with respect to $\lie h_J$ and $P_J$ and $Q_J$ are  the corresponding weight and root lattice  respectively.
Finally, we  regard the algebra $\lie g_J[t]$ as a subalgebra of $\lie g[t]$ in the natural way.

   Given $\mu\in P^+$ set $$V_J(\mu)=\bu(\lie g_J)v_\mu\subset V(\mu) ,\ \qquad   \ D_J(\ell,\mu)=\bu(\lie g_J[t]))w_\mu\subset D(\ell, \mu). $$  Then $V_J(\mu)$ is the irreducible $\lie g_J$--module with highest weight   $\mu_J$  which is the restriction of $\mu$ to $\lie h_J$. The module $D_J(\ell,\mu)$ is a quotient of the Demazure module for $\lie g_J[t]$ associated to the pair $(\ell,\mu_J)$.

The following is elementary and will be used repeatedly.
\begin{lem}\label{elementaryJ} \begin{enumerit} \item [(i)] Suppose that $\mu,\mu'\in P^+$ and $\eta\in Q_J^+$ is such that $\nu= \mu+\mu'-\eta\in P^+$. Then $$\Hom_{\lie g_J}(V_J(\nu), V_J(\mu')\otimes V_J(\mu))\cong \Hom_{\lie g}(V(\nu), V(\mu')\otimes V(\mu)).$$
\item[(ii)]  Suppose that $\mu,\nu\in P^+$ are such that $\mu-\nu\in Q_J^+$. Then, \begin{equation}\label{demJ}\dim\Hom_{\lie g_J}( V_J(\nu), D_J(\ell, \mu))=\dim\Hom_{\lie g}(V(\nu), D(\ell,\mu)).\end{equation}
\end{enumerit}
\hfill\qedsymbol \end{lem}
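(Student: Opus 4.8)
The plan is to prove both statements by the same mechanism: realize each $\Hom$ space as a space of highest weight vectors and then show that, in the weight range forced by the hypotheses, these spaces of highest weight vectors literally coincide as subspaces of the ambient module. Since every finite--dimensional $\lie g$--module (resp.\ $\lie g_J$--module) is completely reducible, for any finite--dimensional $\lie g$--module $M$ one has $\dim\Hom_{\lie g}(V(\nu),M)=\dim\{m\in M_\nu:\lie n^+m=0\}$, and likewise $\dim\Hom_{\lie g_J}(V_J(\nu),N)=\dim\{m\in N_{\nu_J}:\lie n_J^+m=0\}$ for finite--dimensional $N$. Thus in (i) both sides are to be computed inside $V(\mu')\otimes V(\mu)$ (using $V_J(\mu')\otimes V_J(\mu)\subset V(\mu')\otimes V(\mu)$), and in (ii) both sides inside $D(\ell,\mu)$ (using $D_J(\ell,\mu)\subset D(\ell,\mu)$, finite--dimensional by Remark~\ref{locfindim}).

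The key structural input, which I regard as the heart of the argument, is an elementary observation. If $M=\bu(\lie n^-)v$ (resp.\ $M=\bu(\lie n^-[t])v$) is generated by a weight vector $v$ of weight $\mu$, then for any $\xi$ with $\mu-\xi\in Q_J^+$ one has $M_\xi\subset\bu(\lie n_J^-)v$ (resp.\ $M_\xi\subset\bu(\lie n_J^-[t])v$). Indeed $M_\xi$ is spanned by monomials $x_{\gamma_1}^-\cdots x_{\gamma_s}^-v$ (with suitable powers of $t$) where $\gamma_1+\cdots+\gamma_s=\mu-\xi\in Q_J^+$; since each $\gamma_r\in R^+$ has nonnegative simple--root coordinates and the total has vanishing coordinate on every $\alpha_j$ with $j\notin J$, each $\gamma_r$ must lie in $R_J^+$. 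Applied to $V(\mu)=\bu(\lie n^-)v_\mu$ this gives $V(\mu)_\xi=V_J(\mu)_\xi$ for $\xi\in\mu-Q_J^+$, and applied to $D(\ell,\mu)=\bu(\lie n^-[t])w_\mu$ it gives $D(\ell,\mu)_\xi=D_J(\ell,\mu)_\xi$ for $\xi\in\mu-Q_J^+$. I will also use the dual, purely weight--theoretic remark that for $i\notin J$ the operator $x_i^+$ raises weights by $\alpha_i$, so it annihilates any vector sitting at the $J$--boundary of the weight support.

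With these in hand, (i) is two inclusions. A $\lie g$--highest weight vector $w$ of weight $\nu$ in $V(\mu')\otimes V(\mu)$, written weight--homogeneously as $\sum_k a_k\otimes b_k$, satisfies $\wt(a_k)+\wt(b_k)=\nu=\mu+\mu'-\eta$ with $\eta\in Q_J^+$; nonnegativity forces $\wt(a_k)\in\mu'-Q_J^+$ and $\wt(b_k)\in\mu-Q_J^+$, so by the structural fact $w\in V_J(\mu')\otimes V_J(\mu)$, and it is $\lie n_J^+$--highest. Conversely, if $w\in V_J(\mu')\otimes V_J(\mu)$ has weight $\nu$ and is $\lie n_J^+$--highest, then for $i\notin J$ the vector $x_i^+w$ has weight $\nu+\alpha_i=\mu+\mu'-(\eta-\alpha_i)$, which does not lie in $\mu+\mu'-Q^+$ (the $\alpha_i$--coordinate of $\eta-\alpha_i$ is $-1$), hence $x_i^+w=0$; together with $\lie n_J^+w=0$ this gives $\lie n^+w=0$. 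Thus the two spaces of highest weight vectors coincide. One checks along the way that on $V_J(\mu')\otimes V_J(\mu)$ the $\lie h_J$--weight $\nu_J$ determines the full $\lie h$--weight $\nu$, because the restriction map $Q_J\to\lie h_J^*$ is injective.

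Statement (ii) is then the identical pair of inclusions inside $D(\ell,\mu)$: a $\lie g$--highest weight vector of weight $\nu$ lies in $D_J(\ell,\mu)$ by the structural fact (as $\mu-\nu\in Q_J^+$) and is automatically $\lie n_J^+$--highest, while a $\lie g_J$--highest weight vector $w\in D_J(\ell,\mu)$ of weight $\nu$ is killed by every $x_i^+$ with $i\notin J$ because $\nu+\alpha_i\notin\mu-Q^+\supseteq\wt D(\ell,\mu)$, so it is $\lie n^+$--highest. Hence the two spaces of highest weight vectors, and therefore the two multiplicities, agree. I do not anticipate a genuine obstacle—the lemma is elementary, as noted—the only points requiring care are the PBW/nonnegativity argument showing that the $\mu-Q_J^+$ weight spaces are captured by the subalgebra generated by $J$, and the bookkeeping that the $\lie h_J$--weight pins down the $\lie h$--weight on the $J$--generated submodule.
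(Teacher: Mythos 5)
Your proof is correct and complete. The paper itself offers no argument for this lemma at all — it is stated as elementary, with the proof omitted (the statement ends directly in a \qedsymbol) — so there is no official proof to compare against; what you have written is precisely the standard argument the authors implicitly rely on. Both of your key points are sound: the PBW/support observation that a cyclic module $\bu(\lie n^-)v$ (or $\bu(\lie n^-[t])w_\mu$, using that $D(\ell,\mu)=\bu(\lie n^-[t])w_\mu$ by the triangular decomposition and the defining relations) has its $\mu-Q_J^+$ weight spaces contained in the $J$--generated submodule, and the converse observation that a $\lie n_J^+$--highest vector at such a weight is automatically killed by $x_i^+$ for $i\notin J$ since the raised weight leaves $\mu-Q^+$; together with complete reducibility and the injectivity of $Q_J\to\lie h_J^*$ these give both (i) and (ii) exactly as you argue.
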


\subsection{} For $\mu\in P^+$, set $\supp\mu=\{i\in I: \mu(h_i)>0\}.$
\begin{lem}\label{suppintempty} Let $(\ell,\lambda)\in\bn\times P^+$ with $\lambda(h_i)\le \ell$ for all $ 1\le i\le n$. With the notation of Section \ref{convention},  we have $$\supp\mu_1\cap\supp\mu_2=\emptyset.$$ In particular, if $\lambda =m\omega_i$ for some $0\le m\le \ell$ and we are in the simply laced case, then $D(\ell,\lambda)$ is prime.\end{lem}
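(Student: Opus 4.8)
The disjointness of supports is the substantive claim; once it is in hand the primeness assertion is immediate, so I would treat that claim first. The plan is to argue by contradiction. Suppose some node $i$ lies in $\supp\mu_1\cap\supp\mu_2$, so that $\mu_1(h_i),\mu_2(h_i)\ge 1$ and hence $\lambda(h_i)=\mu_1(h_i)+\mu_2(h_i)\ge 2$. I would compare the dimension of the weight space at $\lambda-\alpha_i$ on the two sides of the injection $V(\mu_1)\otimes V(\mu_2)\hookrightarrow D(\ell,\lambda)$ furnished by Lemma \ref{step1}, and show that the left-hand side is strictly larger, contradicting injectivity (the map, being a map of $\lie g$--modules, preserves $\lie h$--weight spaces).

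First I would compute the left-hand side. Since $\alpha_i$ is a simple root and $\wt V(\mu_j)\subset\mu_j-Q^+$, the only decompositions $\lambda-\alpha_i=\eta_1+\eta_2$ with $\eta_j\in\wt V(\mu_j)$ are $(\mu_1-\alpha_i)+\mu_2$ and $\mu_1+(\mu_2-\alpha_i)$. As $\mu_1(h_i),\mu_2(h_i)>0$, both $\mu_1-\alpha_i$ and $\mu_2-\alpha_i$ are weights, each of multiplicity one, so
$$\dim\bigl(V(\mu_1)\otimes V(\mu_2)\bigr)_{\lambda-\alpha_i}=2.$$

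The heart of the argument is the right-hand side: I claim $\dim D(\ell,\lambda)_{\lambda-\alpha_i}=1$. Because $\alpha_i$ is simple, every vector of weight $\lambda-\alpha_i$ in the cyclic module $D(\ell,\lambda)$ is a combination of the vectors $(x_{\alpha_i}^-\otimes t^s)w_\lambda$, $s\ge 0$. Now $1\le\lambda(h_i)\le\ell\le d_i\ell$ forces $s_{\alpha_i}=1$, so the defining relation \eqref{demquot1} reads $(x_{\alpha_i}^-\otimes t)w_\lambda=0$. Applying $h_i\otimes t^{s-1}$ to this relation and using $(h_i\otimes t^{s-1})w_\lambda=0$ for $s\ge 2$ then kills $(x_{\alpha_i}^-\otimes t^s)w_\lambda$ for all $s\ge 2$ as well. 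Hence the weight space is spanned by $x_{\alpha_i}^-w_\lambda$ alone, which is nonzero since $\lambda(h_i)>0$, giving dimension exactly one. This contradicts the inequality $2\le 1$ forced by injectivity, and establishes $\supp\mu_1\cap\supp\mu_2=\emptyset$. I expect this collapse of the weight multiplicity --- the precise point at which the level constraint $\lambda(h_i)\le\ell$ enters --- to be the main step; everything else is bookkeeping.

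Finally, for the \emph{in particular}: if $\lambda=m\omega_i$ with $0\le m\le\ell$, then dominance of $\mu_1,\mu_2$ together with $\mu_1+\mu_2=m\omega_i$ forces $\mu_1(h_j)=\mu_2(h_j)=0$ for all $j\ne i$, so $\mu_1$ and $\mu_2$ are nonnegative multiples of $\omega_i$. The disjointness just proved then prevents both from being nonzero, so $\mu_1=0$ or $\mu_2=0$; by the reduction recorded in Section \ref{convention} this is exactly what is needed to conclude that $D(\ell,\lambda)$ is prime.
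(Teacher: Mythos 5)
Your proof is correct, and it takes a genuinely different (more elementary) route than the paper's, although both arguments pivot on exactly the same key input: since $\lambda(h_i)\le\ell$ one has $s_{\alpha_i}=1$, so the defining relation \eqref{demquot1} gives $(x_{\alpha_i}^-\otimes t)w_\lambda=0$, and the bracket with $h_i\otimes t^{s-1}$ kills $(x_{\alpha_i}^-\otimes t^s)w_\lambda$ for all $s\ge 2$. The paper packages this as the statement $\bu(\lie g_J[t])w_\lambda\cong V_J(\lambda_J)$ for $J=\{i\}$, and then runs a Hom-space comparison: by Clebsch--Gordan for the diagram $\lie{sl}_2$ and Lemma \ref{elementaryJ}(i), $\Hom_{\lie g}(V(\lambda-\alpha_i),V(\mu_1)\otimes V(\mu_2))\ne 0$, while Lemma \ref{elementaryJ}(ii) gives $\Hom_{\lie g}(V(\lambda-\alpha_i),D(\ell,\lambda))=0$ --- a contradiction. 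You instead compare raw $\lie h$--weight multiplicities at $\lambda-\alpha_i$: two on the tensor side (from $(\mu_1-\alpha_i)+\mu_2$ and $\mu_1+(\mu_2-\alpha_i)$, each a one-dimensional weight space) versus one on the Demazure side (the PBW argument correctly reduces the weight space to the span of $(x_{\alpha_i}^-\otimes t^s)w_\lambda$, $s\ge 0$, which your relations collapse to $x_{\alpha_i}^-w_\lambda$). What your version buys is self-containedness: you avoid Clebsch--Gordan, avoid Lemma \ref{elementaryJ} entirely, and never need the (true, but requiring a check) fact that $\lambda-\alpha_i$ is dominant when $\lambda(h_i)\ge 2$, which the paper's use of $V(\lambda-\alpha_i)$ silently relies on. What the paper's version buys is uniformity: the Hom-counting framework through diagram subalgebras $\lie g_J$ is reused later in Section \ref{prime} for non-simple roots $\theta_J$ (where bare weight multiplicities would no longer suffice, since the relevant weight spaces are not one-dimensional), so the lemma's proof there is a warm-up for the general argument. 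Your treatment of the \emph{in particular} statement matches the paper's intended reduction via Section \ref{convention} and Lemma \ref{step1}.
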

\begin{pf}
Suppose for a contradiction   that $i\in\supp\mu_1\cap \supp\mu_2$ for some $1\le i\le n$ and set $ J=\{i\}$. Then  $\lie g_J\cong\lie{sl}_2$ and hence using the Clebsch--Gordon formula and Proposition \ref{elementaryJ}, we get \begin{equation}\Hom_{\lie g} (V(\lambda-\alpha_i), V(\mu_1)\otimes V(\mu_2))=\Hom_{\lie g} (V(\mu_1+\mu_2-\alpha_i), V(\mu_1)\otimes V(\mu_2))\ne 0.\end{equation}Using Lemma \ref{step1} this implies that \begin{equation} \label{disjoint}\Hom_{\lie g}(V(\lambda-\alpha_i), D(\ell,\lambda))\ne 0.\end{equation}
On the other hand since $\lambda(h_i)\le \ell$, we have  that the element $w_\lambda\in D(\ell,\lambda)$ satisfies the defining relation $(x_i^-\otimes t) w_\lambda=0$ and hence $$\bu(\lie g_J[t])w_\lambda\cong \bu(\lie g_J)w_\lambda\cong V_J(\lambda_J).$$ Using \eqref{demJ} we get
$$ \Hom_{\lie g}(V(\lambda-\alpha_i), D(\ell,\lambda))=0,$$ which contradicts \eqref{disjoint}. This proves the Lemma. \end{pf}

\subsection{} \begin{lem}\label{chooseJ} Suppose that $\nu_1,\nu_2\in P^+$ are  such that $$\supp \nu_1\cap\supp \nu_2=\emptyset.$$ There exists a connected subset  $J\subset I$ with  $\lie g_J$  isomorphic to $\lie{sl}_{r+1}$ for some $r\in\bn$ and  $$|J\cap\supp \nu_j|=\begin{cases} 1, \ \ \nu_j\ne 0,\\  0,\ \ \nu_j=0,\end{cases} ,\  \ \ 
j=1,2. $$
\end{lem}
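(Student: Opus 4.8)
The plan is to use the structural fact that the Dynkin diagram of a simply--laced simple Lie algebra (type $A$, $D$ or $E$) is a tree. Under this hypothesis $\lie g_J\cong\lie{sl}_{r+1}$ precisely when the subdiagram induced on $J$ is of type $A_r$, i.e.\ when $J$ is the vertex set of a path (a chain of nodes joined by single edges with no branching). So the whole problem reduces to producing a path in the tree that picks out exactly one node of $\supp\nu_1$ and exactly one node of $\supp\nu_2$, and the natural candidate is a shortest such path. I would work throughout with the graph distance $\dist(i,j)$ in the Dynkin diagram.

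First I would dispose of the degenerate cases: if $\nu_1=\nu_2=0$ the intersection conditions are vacuous and any single node $J=\{i\}$ works (then $\lie g_J\cong\lie{sl}_2$); if exactly one vanishes, say $\nu_2=0$, then any $J=\{i\}$ with $i\in\supp\nu_1$ satisfies $|J\cap\supp\nu_1|=1$ and $|J\cap\supp\nu_2|=0$. For the substantive case $\nu_1,\nu_2\neq 0$, I would choose $i_1\in\supp\nu_1$ and $i_2\in\supp\nu_2$ minimizing $\dist(i_1,i_2)$ over all such pairs (note $i_1\neq i_2$ since the supports are disjoint), and take $J$ to be the set of nodes on the unique geodesic from $i_1$ to $i_2$. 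The point to record here is that, because the diagram is a tree, this geodesic admits no chord (a chord would create a cycle) and no branch node internal to $J$, so the induced subdiagram on $J$ is itself a path; being simply--laced, all its edges are simple, whence $\lie g_J\cong\lie{sl}_{r+1}$ with $r=|J|\ge 1$.

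What remains is the intersection count, and this is the only place the minimality is actually used. Clearly $i_1\in J\cap\supp\nu_1$ and $i_2\in J\cap\supp\nu_2$, so I must rule out further nodes. If some $j\in J\cap\supp\nu_1$ had $j\neq i_1$, then $j\neq i_2$ as well (the supports are disjoint), so $j$ is an interior node of the geodesic and $\dist(i_1,i_2)=\dist(i_1,j)+\dist(j,i_2)$ with $\dist(i_1,j)\ge 1$; thus $\dist(j,i_2)<\dist(i_1,i_2)$ with $j\in\supp\nu_1$ and $i_2\in\supp\nu_2$, contradicting minimality. The symmetric argument handles $\supp\nu_2$, giving $|J\cap\supp\nu_1|=|J\cap\supp\nu_2|=1$. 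I do not expect a genuine obstacle here; the lemma is essentially combinatorial, and the only subtle point is the structural input that simply--laced diagrams are trees, which is exactly what guarantees both that the geodesic induces a type--$A$ subdiagram and that the minimal geodesic cannot revisit either support.
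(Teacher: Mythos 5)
Your proof is correct, but it takes a genuinely different route from the paper's. The paper argues by case analysis on the type of $\lie g$: in type $A$ it uses the linear order on the nodes (take $i_1$ maximal in $\supp\nu_1$, then $i_2\in\supp\nu_2$ minimal with $i_2>i_1$, and let $J$ be the interval between them), and in types $D$ and $E$ it reduces to the type $A$ case by locating the trivalent node, stripping off the part of $\nu_1$ supported on the short leg (replacing $\nu_1$ by $\nu_1'=\nu_1-\nu_1(h_{i_1})\omega_{i_1}$), and then checking that a set $J$ found for the pair $(\nu_1',\nu_2)$ inside the type $A$ subdiagram still works for $(\nu_1,\nu_2)$. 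Your argument instead is uniform across $A$, $D$, $E$: minimize $\dist(i_1,i_2)$ over $i_1\in\supp\nu_1$, $i_2\in\supp\nu_2$, take $J$ to be the geodesic, and use that a geodesic in a tree has no chords (so the induced subdiagram is a path, hence of type $A_{|J|}$ by simply--lacedness) together with minimality to exclude further support nodes. What your approach buys is brevity and robustness: it avoids both the type-by-type splitting and the reduction step, and it would apply verbatim to any simply--laced diagram that is a tree. One small point in your favor on the degenerate case: when $\nu_1=\nu_2=0$ the paper takes $J=\emptyset$, which strictly speaking does not yield $\lie{sl}_{r+1}$ with $r\in\bn$, whereas your choice of a singleton matches the statement as written.
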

\begin{pf}   If $\nu_1=\nu_2=0$, we take $J$ to be the empty set while if $\nu_1=0$ and $\nu_2\ne 0$ we take $J=\{i\}$ for some $i\in\supp\nu_2$. Assume now that $\nu_1$ and $\nu_2$ are non--zero. If $\lie g$ is of type $A_n$,
assume without loss of generality that $\supp \nu_2$ contains the maximal element in the union $\supp\nu_1\cup\supp \nu_2$.   Choose $i_1$ to be the maximal element in $\supp\nu_1$ and  $i_2\in \supp \nu_2$ minimal so that $i_2>i_1$. The minimal connected subset $J$  of $I$ containing $i_1$ and $i_2$ satisfies the conditions of the Lemma.

If $\lie g$ is of type $D$ or $E$ we let $i_0$ be the trivalent node and let $I_r$, $r=1,2,3$ be the three legs of the Dynkin diagram through $i_0$ and assume without loss of generality that $I_1=\{i_0,i_1\}$. Assume that $i_1\notin\supp\nu_2$.   Then, $$\nu_1' = \nu_1-\nu_1(h_{i_1})\omega_{i_1}\in P^+\ \ \supp \nu_1'\cap\supp \nu_2=\emptyset,\ \ \ i_1\notin\supp \nu_1'.$$  If $\nu_1'=0$ take $J$ to be the connected closure of $\{i_1,i_2\}$ for some $i_2\in\supp\nu_2$. If $\nu_1'\ne 0$, then
the connected closure of $\supp\nu_1'\cup\supp\nu_2$ is contained in $I_2\cup I_3$ and is of type $A$.
 Now, we can  use the result for $A$ to find $J\subset I\setminus \{i_1\}$ with the required properties for the pair $\nu_1'$, $\nu_2$.  But this set also has the desired properties for 
 the pair $\nu_1$, $\nu_2$ and the proof is complete.

\end{pf}

\subsection{} We return to the notation of Section \ref{convention}. Using Lemma \ref{suppintempty} we see that we can choose $J$ as in Lemma \ref{chooseJ} for the pair $\mu_1$, $\mu_2$. Let  $\theta_J\in R^+_J$ be  the highest root of $\lie g_J$ and notice that $\lambda_J=\lambda(h_{i_1})\omega_{i_1}+\lambda(h_{i_2})\omega_{i_2}$. If we assume in addition that $\lambda(h_i)<\ell$ for all $i\in I$, then we see that: $\lambda(h_\alpha)<\ell$ for all $\alpha\in R^+_J$ with $\alpha\ne \theta_J$
and $\lambda(h_{\theta_J})<2\ell$.

 Hence  the following  relations hold  in $D(\ell, \lambda)$  \begin{gather*}(x^-_{\alpha}\otimes t)w_\lambda= 0,\ \ \alpha\in R^+_J,
\ \ \alpha\ne \theta_J, \ \ (x^-_{\theta_J}\otimes t^2)w_\lambda=0,\\   (x^-_{\theta_J}\otimes t)^r=0,\ \ r >p=\max\{0,  \lambda(h_{\theta_J})-\ell\}.\end{gather*}  It is again a standard fact that the elements $(x^-_{\theta_J}\otimes t)^sw_\lambda$ are non--zero if $0\le s\le p$. Using the Poincare--Birkhoff--Witt theorem, one sees that $$\bu(\lie{g}_J[t])w_\lambda=\sum_{s=0}^p \bu(\lie g_J)(x^-_{\theta_J}\otimes t)^s w_\lambda.$$
 Moreover, a simple calculation  shows  that $(x^-_{\theta_J}\otimes t)^sw_\lambda$, $s\in\bz_+$  are $\lie n^+$--invariant vectors in $D(\ell,\lambda)$
and we have  $$\bu(\lie{g}_J[t])w_\lambda\cong_{\lie g_J}\bigoplus_{s=0}^p \bu(\lie g_J)(x^-_{\theta_J}\otimes t)^s w_\lambda\cong_{\lie g_J} \bigoplus_{s=0}^p V_J(\lambda_J-s\theta_J)^{m_s}.$$ Applying \eqref{demJ}, now gives \begin{equation}\label{homzwero1}\Hom_{\lie g}(V(\lambda-s\theta_J), D(\ell,\lambda)) =0,\ \ s > p.\end{equation}
On the other hand, it is well--known and in any case easily proved that
 \begin{equation*} \dim\Hom_{\lie g} (V(\mu_1+\mu_2-s\theta_J), V(\mu_1)\otimes V(\mu_2))\ne 0\ \  {\rm{if}}\ \ 0\le s\le \min\{\mu_1(h_{\theta_J}),\mu_2(h_{\theta_J})\}.\end{equation*}
Since $V(\mu_1)\otimes V(\mu_2)$ is isomorphic to a $\lie g$--submodule of $D(\ell,\lambda)$, it follows that
 \begin{equation}\label{tensoram} \dim\Hom_{\lie g} (V(\lambda-s\theta_J), D(\ell,\lambda))\ne 0\ \  {\rm{if}}\ \ 0\le s\le \min\{\mu_1(h_{\theta_J}),\mu_2(h_{\theta_J})\}.\end{equation}
Since $$ p=\max\{0,\lambda(h_{\theta_J})-\ell\}=\max\{0, \mu_1(h_{\theta_J})+\mu_2(h_{\theta_J})-\ell\} <\min\{\mu_1(h_{\theta_J}),\mu_2(h_{\theta_J})\},$$ we see that \eqref{tensoram} contradicts \eqref{homzwero1}. The proof of Proposition \ref{primefactor} is complete.

\section{Proof of Proposition \ref{key}}\label{keyproof}
\subsection{}  For $w\in W$ and $\lambda, \mu\in P^+$, we have $$wt_\mu(\ell\Lambda_0+w_0\lambda)= \ell\Lambda_0 +w(\ell\mu +w_0\lambda)+A\delta$$ for some $A\in\bz$. Hence, $wt_\mu(\ell\Lambda_0+w_0\lambda)\in\hat P^+$ iff $w\in W$ is such that $$w(\ell\mu+w_0\lambda)\in P^+\quad {\rm{and}} \quad \ \ \\\ w(\ell\mu+w_0\lambda)(h_\theta)\le\ell.$$  This shows that Proposition \ref{key} is an immediate consequence of the following,
\begin{lem}\label{key2} Given $(\ell, \lambda)\in \bn\times \lie h^*$ with $0\le\lambda(h_i)\le d_i\ell$  (equivalently that $0\le (\lambda,\alpha_i)\le\ell)$) for $1\le i\le n$, there exists $\mu\in L^+$ such that \begin{equation}\label{keyineq} |(\ell\mu-\lambda, \alpha)|\le \ell,\end{equation} for all $\alpha\in R^+$.
\end{lem}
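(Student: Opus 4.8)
The plan is to translate \eqref{keyineq} into a purely combinatorial rounding problem about simple-root coefficients and then to solve that problem type by type. First I would record the numerology. Since the form is normalized so that $(\alpha,\alpha)=2$ for long roots, one has $(\omega_i,\alpha_j)=\tfrac1{d_j}\delta_{ij}$, whence $(d_i\omega_i,\alpha_j)=\delta_{ij}$; thus the $d_i\omega_i$ form the basis of $L^+$ dual to the simple roots. Writing $\mu=\sum_i c_i\,d_i\omega_i$ with $c_i\in\bz_+$ and setting $a_i=(\lambda,\alpha_i)\in[0,\ell]$, we get $(\ell\mu-\lambda,\alpha_i)=\ell c_i-a_i$, so that for a positive root $\alpha=\sum_i n_i\alpha_i$,
$$(\ell\mu-\lambda,\alpha)=\sum_i n_i(\ell c_i-a_i).$$
The simple-root instances of \eqref{keyineq} already force the relevant $c_i$ into $\{0,1\}$, so the lemma becomes the rounding statement: \emph{choose $c_i\in\{0,1\}$ so that $\bigl|\sum_i n_i(\ell c_i-a_i)\bigr|\le\ell$ for every $\alpha=\sum_i n_i\alpha_i\in R^+$.}

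Second I would dispose of type $A$, which is the model for everything else. Here every $d_i=1$ and every positive root is an interval $\alpha_i+\alpha_{i+1}+\cdots+\alpha_j$. Setting $A_m=a_1+\cdots+a_m$ and choosing $c_m=\lfloor A_m/\ell\rfloor-\lfloor A_{m-1}/\ell\rfloor$, the hypothesis $a_m\le\ell$ forces $c_m\in\{0,1\}$ (so $\mu\in L^+$), while the prefix sums $Z_m:=\ell\lfloor A_m/\ell\rfloor-A_m$ all lie in $(-\ell,0]$. Since $(\ell\mu-\lambda,\alpha_i+\cdots+\alpha_j)=Z_j-Z_{i-1}$, the bound $|Z_j-Z_{i-1}|<\ell$ is immediate.

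Third, for the remaining classical types and $G_2$ I would pass to the standard Euclidean realization of $R$. There the conditions collapse to bounds on the coordinates of $\xi=\ell\mu-\lambda$: the long roots $2e_i$ in type $C$ and the short roots $e_i$ in type $B$ give per-coordinate bounds, while $e_i\pm e_j$ gives $|\xi_i\pm\xi_j|\le\ell$, which is controlled as soon as each $|\xi_i|\le\ell/2$. The strategy is again to center the coordinates by a nearest-multiple-of-$\ell$ choice applied to the partial sums $\widetilde A_i=a_i+a_{i+1}+\cdots$; monotonicity of these partial sums keeps the resulting increments in $\{0,1\}$, which is exactly what secures $\mu\in L^+$. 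The rank-two system $G_2$ I would settle by a direct finite analysis: with only $(c_1,c_2)\in\{0,1\}^2$ to try and $(a_1,a_2)$ ranging over $[0,\ell]^2$, one checks that the binding roots $3\alpha_1+\alpha_2$ and $3\alpha_1+2\alpha_2$ can always be met.

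The hard part is reconciling the positivity requirement $\mu\in L^+$ with the coordinate-centering. Centering can always be achieved inside $L$, but the resulting $\mu$ need not be dominant: dominance translates into the coordinates of $\mu$ being suitably ordered and nonnegative, and this can clash with centering precisely at the exceptional node of the diagram — the branch node of $D_n$, where $\alpha_n=e_{n-1}+e_n$ couples the last two coordinates, and the long node of $C_n$. Treating these last coordinates is the delicate step of the argument. It is also why the method stops at the classical types and $G_2$: in $F_4$ and $E_8$ the positive roots carry simple-root coefficients as large as $4$ and $6$, so no coordinate reduction isolates a single $\xi_i$ and no uniform rounding of the $c_i$ can control all of the large combinations $\sum_i n_i(\ell c_i-a_i)$ simultaneously, which is exactly why those cases are left to computer verification for small $\ell$.
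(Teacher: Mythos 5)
Your reduction of the lemma to choosing $c_i\in\{0,1\}$ with $\bigl|\sum_i n_i(\ell c_i-a_i)\bigr|\le\ell$ for all positive roots is sound, and your type $A$ argument via prefix sums is complete and correct --- in fact it is more self-contained than the paper's, which obtains type $A$ by restricting a type $C$ solution to a subdiagram. Everything after that, however, is assertion rather than proof, and in type $C$ --- the case from which the paper bootstraps types $A$, $B$ and $D$ --- your stated plan cannot be carried out. With the paper's normalization (long roots of square length $2$), the Euclidean realization of $C_n$ has $(e_i,e_j)=\frac{1}{2}\delta_{ij}$, so the long roots $2e_i$ impose $|\xi_i|\le\ell$ while the short roots $e_i\pm e_j$ impose $|\xi_i\pm\xi_j|\le 2\ell$, not $\le\ell$ as you wrote; this factor-of-two slip is what leads you to demand $|\xi_i|\le\ell/2$, i.e.\ nearest-multiple-of-$\ell$ centering. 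That target is unreachable inside $L$: for $C_n$, $L$ is spanned by $2\omega_i=2(e_1+\cdots+e_i)$, $i<n$, together with $\omega_n=e_1+\cdots+e_n$, hence consists of integer vectors whose coordinates all share the same parity. For example, in $C_2$ take $\lambda$ with coordinates $(2\ell,\ell)$, which satisfies the hypotheses since $(\lambda,\alpha_1)=\ell/2$ and $(\lambda,\alpha_2)=\ell$; centering each coordinate within $\ell/2$ forces $\mu=(2,1)\notin L$. So your claim that ``centering can always be achieved inside $L$'' is false, the real obstruction in type $C$ is the lattice rather than dominance, and the step you defer as ``the delicate step of the argument'' is precisely the missing content.

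What fills the gap is the observation with which the paper's proof begins: every short root of $C_n$ is one half of a sum or difference of two long roots, so it suffices to satisfy the inequality for long roots alone, whose per-coordinate tolerance is $\ell$ and not $\ell/2$. Granting this, type $C$ follows either by the paper's induction on rank (deciding at each stage whether to add $2\omega_1$, using that $\theta-2\alpha_1$ is a root of the $C_{n-1}$ subdiagram), or, staying within your rounding philosophy, by rounding each coordinate of $\lambda$ to the nearest multiple of $2\ell$, which lands in $L^+$ automatically (even, weakly decreasing, nonnegative coordinates) with error at most $\ell$. The paper then deduces types $A$, $D$, $B$ from the $C_n$ statement through subdiagrams and folding, carefully tracking the factor of two by which the normalized forms change under restriction and handling the spin nodes of $D_n$ with the diagram automorphism $\sigma$; $G_2$ is settled by an explicit list of four weights rather than an unspecified finite check (note also that your ``binding roots'' $3\alpha_1+\alpha_2$ and $3\alpha_1+2\alpha_2$ are not roots in the paper's convention, where $\alpha_2$ is the short simple root). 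Until the type $C$ step is repaired along these lines, the proposal does not prove the lemma.
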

The Lemma is proved in the rest of the section. The strategy for proving the Lemma is as follows. We give an inductive construction of $\mu$  in the case of $\lie g = C_n$ and use elementary results on root systems to deduce the existence of $\mu$ in the other classical cases.
In the case of $G_2$, we write down explicit solutions of $\mu$.  {\em From now on, we will assume that $(\ell,\lambda)$ are fixed and satisfy the conditions of the Lemma. We remind the reader that we are working with the form on $\lie h^*$ which has been normalized so that the square length of a long root is two.}

\subsection{Type $C$}\label{construction2} \begin{lem} Assume that $\lie g$ is of type $C_n$ and that $\alpha_n$ is the unique long simple  root. There exists  $\mu= 2\sum_{i=1}^{n-1}s_i\omega_{i}$ with $s_i\in\{0,1\}$ satisfying $|(\ell\mu-\lambda, \alpha)|\le \ell$ for all $\alpha\in R^+$.
\end{lem}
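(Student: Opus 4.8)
The plan is to work in the standard coordinates for $C_n$. Realize $\lie{h}^*$ with an orthogonal basis $e_1,\dots,e_n$ satisfying $(e_i,e_j)=\tfrac12\delta_{ij}$, so that the long root has square length $2$; then the positive roots are the short roots $e_i\pm e_j$ ($i<j$) and the long roots $2e_i$, the simple roots are $\alpha_i=e_i-e_{i+1}$ ($i<n$) and $\alpha_n=2e_n$, and the fundamental weights are $\omega_k=e_1+\cdots+e_k$. Writing $\lambda=\sum_i\lambda_i e_i$, the hypotheses $0\le(\lambda,\alpha_i)\le\ell$ translate into $\lambda_1\ge\cdots\ge\lambda_n\ge 0$ together with $\lambda_i-\lambda_{i+1}\le 2\ell$ for $i<n$ and $\lambda_n\le\ell$. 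In these coordinates a candidate $\mu=2\sum_{i=1}^{n-1}s_i\omega_i$ with $s_i\in\{0,1\}$ is exactly a vector $\mu=\sum_k\mu_k e_k$ whose entries $\mu_k=2\sum_{i=k}^{n-1}s_i$ are even, weakly decreasing, satisfy $\mu_n=0$, and have consecutive differences $\mu_k-\mu_{k+1}\in\{0,2\}$.

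The first thing I would record is that the target inequality simplifies dramatically. Put $\nu=\ell\mu-\lambda=\sum_k\nu_k e_k$. For the long roots, $(\nu,2e_i)=\nu_i$, so $|(\nu,2e_i)|\le\ell$ is exactly $|\nu_i|\le\ell$. For the short roots, $(\nu,e_i\pm e_j)=\tfrac12(\nu_i\pm\nu_j)$, so $|(\nu,e_i\pm e_j)|\le\ell$ amounts to $|\nu_i\pm\nu_j|\le 2\ell$, which follows automatically from $|\nu_i|\le\ell$ and $|\nu_j|\le\ell$ by the triangle inequality. Hence the short-root conditions come for free, and it suffices to produce $\mu$ of the prescribed shape with $|\ell\mu_k-\lambda_k|\le\ell$ for every $k$. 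This reduction to the long roots is the conceptual heart of the argument.

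What remains is a one-dimensional rounding problem per coordinate, coupled only through monotonicity and the step constraint. I would set $\mu_k=2\bigl\lceil\tfrac{\lambda_k-\ell}{2\ell}\bigr\rceil$. Since $\lceil x\rceil\in[x,x+1)$, this gives $\lambda_k-\ell\le\ell\mu_k<\lambda_k+\ell$, i.e. $|\ell\mu_k-\lambda_k|\le\ell$; the entries are even by construction and non-negative because $\lambda_k\ge 0$. Monotonicity of $(\lambda_k)$ forces $\mu_k\ge\mu_{k+1}$, and the bound $\lambda_k-\lambda_{k+1}\le 2\ell$ gives $\tfrac{\lambda_k-\ell}{2\ell}-\tfrac{\lambda_{k+1}-\ell}{2\ell}\le 1$, so the ceiling jumps by at most $1$ and $\mu_k-\mu_{k+1}\in\{0,2\}$. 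Finally $0\le\lambda_n\le\ell$ gives $\tfrac{\lambda_n-\ell}{2\ell}\in[-\tfrac12,0]$, whence $\mu_n=0$. Setting $s_i=(\mu_i-\mu_{i+1})/2\in\{0,1\}$ recovers $\mu=2\sum_{i=1}^{n-1}s_i\omega_i$ of the required form, and $\mu\in L^+$. The step I would watch most carefully is the interplay between the difference bound $\mu_k-\mu_{k+1}\le 2$ (needed so that $\mu$ has exactly the prescribed shape with $s_i\in\{0,1\}$) and the boundary condition $\mu_n=0$: both use precisely the simple-root bounds $\lambda_i-\lambda_{i+1}\le 2\ell$ and $\lambda_n\le\ell$, which is why the normalization and the particular rounding convention (rounding the lower endpoint up, rather than to the nearest even integer) are exactly what make the construction land in the correct form.
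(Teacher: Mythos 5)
Your proof is correct, and it shares with the paper the one indispensable reduction: every short root of $C_n$ is half a difference of two long roots, so the bound $|(\ell\mu-\lambda,\alpha)|\le\ell$ only needs to be checked on the long roots (your triangle-inequality step in coordinates is exactly this observation). After that the two arguments genuinely diverge. The paper proceeds by induction on the rank: it invokes the statement for the $C_{n-1}$-subdiagram on $\{\alpha_2,\dots,\alpha_n\}$ to get $\mu'$, observes that the only new long root is the highest root $\theta$, and then sets $\mu=\mu'$ or $\mu=2\omega_1+\mu'$ according to whether $|(\lambda,\theta)-\ell(\mu',\theta-2\alpha_1)|\le\ell$, the second case being handled by a short estimate using $0\le(\lambda,\alpha_1)\le\ell$ and the inductive bound on $\theta-2\alpha_1$. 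You instead work in the orthogonal coordinates $e_1,\dots,e_n$ and produce the solution in closed form, $\mu_k=2\lceil(\lambda_k-\ell)/(2\ell)\rceil$, checking evenness, weak monotonicity, the step bound $\mu_k-\mu_{k+1}\in\{0,2\}$, and $\mu_n=0$ directly from the hypotheses $0\le\lambda_i-\lambda_{i+1}\le2\ell$ and $0\le\lambda_n\le\ell$; all of these verifications are sound (the interval property of the ceiling, the implication $x\le y+1\Rightarrow\lceil x\rceil\le\lceil y\rceil+1$, and the vanishing of the last coordinate). What each approach buys: your construction is non-recursive and exhibits an explicit $\mu$ at once, which makes the lemma self-contained and transparent; the paper's inductive, coordinate-free formulation matches the template it then reuses to deduce the statement for types $A$, $D$, and $B$ from the $C_n$ case by restriction and folding, where the precise shape $\mu=2\sum_i s_i\omega_i$ with $s_i\in\{0,1\}$ (which your formula also delivers) is the bookkeeping that matters.
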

\begin{pf}
Any short root  $\alpha\in R$  is one half the difference of two long roots and hence it suffices to find $\mu$ such that $|(\ell\mu-\lambda, \alpha)|\le \ell$ holds for the long roots.

We proceed by induction on $n$, with induction beginning  at $n=1$ where we can take $\mu=0$.
For the inductive step assume  that the result is proved for the  $C_{n-1}$--subdiagram of $C_n$ defined by the  simple roots $\{\alpha_2,\cdots\alpha_n\}$ of $C_n$. Let  $\mu'=2\sum_{j=2}^{n-1}s_i\omega_i\in L^+$, with $s_i\in\{0,1\}$  such that $$|(\ell\mu'-\lambda, \alpha)|\le \ell,$$ for all roots $\alpha$ of $C_{n-1}$.
 The only additional long root in $C_n$ is the highest root $\theta$. Moreover,  $\theta-2\alpha_1$ is a root of $C_{n-1}$ and so
  we take   $$\mu=\begin{cases}\mu'\ \   \text{if } |(\lambda,\theta)-\ell(\mu',\theta-2\alpha_1)|\le \ell,\\
\; 2\omega_1+\mu',  \ \  {\rm{otherwise}}.
\end{cases}$$ A simple calculation completes the proof.
\end{pf}

\subsection{Type $A$} The diagram  subalgebra of $C_n$ generated by the root vectors $x_i^\pm$, $1\le i\le n-1$ is isomorphic to $A_{n-1}$ and the restriction of the fundamental weights $\omega_i$, $1\le i\le n-1$ of $C_n$ to $A_{n-1}$ gives a set fundamental weights for $A_{n-1}$.    There is one important thing to note here however. The restriction of the normalized form $(\ ,\ )$ of $C_n$ to the $A_{n-1}$ subdiagram is  one half of the normalized form on $A_{n-1}$.
This means that if $\lambda$ is any element in the real span of $\omega_i$, $1\le i\le n-1$ satisfying the conditions
of Lemma \ref{key2} of $A_{n-1}$ with respect to its normalized form, then the element $2\lambda$ regarded as an element of $C_n$ satisfies  $0\le (2\lambda,\alpha_i)\le \ell$ for all $1\le i\le n$ with respect to the normalized form on $C_n$. Hence we  can find $\mu=\sum_{i=1}^{n-1}s_i\omega_i$, with  $s_i\in\{0,1\}$ such that
$$|(2\lambda-2\ell\mu, \alpha)|\le \ell,$$ for all short  roots $\alpha$ of $C_n$ and hence for all roots of  $A_{n-1}$. This gives that $\mu$ satisfies \eqref{keyineq} for $\lambda$ with respect to the form on $A_{n-1}$ and the Lemma is established in this case.

\subsection{Type $D$}\label{dn} To prove the Lemma for $D_n$, we observe that the subset of short roots of $C_n$ form a root system of type $D_n$. Notice again that the restriction of the  normalized form on $C_n$ to $D_n$ is one half the normalized form of $D_n$.
 The simple system for $D_n$ is the set $\{\alpha_i: 1\le i\le n-1\}\cup\{\alpha_{n-1}+\alpha_n\}$ and  the set of fundamental weights  is $\{\omega_i: 1\le i\le n-2\}\cup\{ \omega_{n-1}-\frac{1}{2}\omega_n,\ \ \frac{1}{2}\omega_n\}$. In particular this means that if $\lambda$ is in the real span of the fundamental weights for $D_n$ satisfying the hypothesis of Lemma \ref{key2}, then,  either $2\lambda$ or $2\lambda\sigma$ (here $\sigma$ is the diagram automorphism of $D_n$ which switches the spin nodes and leaves the others fixed)   satisfy the conditions for $C_n$. Hence
we can choose a dominant integral weight  for $C_n$  of the form $2\mu$ where $\mu=\sum_{i=1}^{n-1}s_i\omega_i$, $s_i\in\{0,1\}$,  $1\le i\le n-1$ such that
\newline
$$|2(\ell\mu-\lambda), {\alpha})|\leq \ell \ \  ({\rm{resp.}}\ \ |2(\ell\mu-\lambda\sigma), {\alpha})|\leq \ell)$$ for all short roots $\alpha$ of $C_n$, i.e., for all roots of $D_n$. Since $\mu$ and $\mu\sigma$ are dominant integral weights of $D_n$, Lemma \ref{key2} follows for the element $\lambda$ with $\mu$ or $\mu\sigma$ and the normalized form of $D_n$, according as  $2\lambda$ or $2\lambda\sigma$ is dominant for $C_n$. {\em We remark here that the element $\mu$  when regarded as an element of $D_n$ is such that
 it is either not supported on the spin nodes or it is supported on both spin nodes.  This is because either $s_{n-1}=0$ in which case it is not supported on the spin nodes or $s_{n-1}=1$ and we have $$\mu = \sum_{i=1}^{n-2}s_i\omega_i + (\omega_{n-1}-\frac{1}{2}\omega_n) + \frac{1}{2}\omega_n$$}

\subsection{Type $B$}  To prove the result for $B_n$ we first observe that it is enough to prove that there exists $\mu\in L^+$ such that  \eqref{keyineq} is satisfied for the long roots. This is because any short  root is half the difference of two long roots. Recall that $B_n$ can be regarded as a subalgebra of $D_{n+1}$ by folding: namely it is the fixed points of the automorphism $\sigma$ which interchanges the spin nodes and leaves the others fixed. If $\alpha_i$, $1\le i\le n+1$ are the simple roots of $D_{n+1}$, then the simple roots of $B_n$ are $\alpha_i$, $1\le i\le n-1$ and $\frac 12 (\alpha_n+\alpha_{n+1})$. It is easily seen that any long root of $B_n$ is a root of $D_{n+1}$.

The restriction of the normalized form of $D_{n+1}$ to $B_n$ is the normalized form of $B_n$. The set of dominant integral weights for $B_n$ is $\omega_i$, $1\le i\le n-1$, and $\frac 12(\omega_n+\omega_{n+1})$. Given $\lambda=\sum_{i=1}^{n-1}r_i\omega_i +r_n\frac 12(\omega_n+\omega_{n+1})$, one sees that if $\lambda$ satisfies the conditions of Lemma \ref{keyineq} for $B_n$, then  we have that $r_n\le 2\ell$ and hence
 $\lambda$ also satisfies the conditions  for $D_{n+1}$. Choose $\mu=\sum_{i=1}^{n+1}s_i\omega_i$ as in Section \ref{dn} such $ s_i\in\{0,1\}$ satisfies \eqref{keyineq} for $D_{n+1}$. Since either $s_n=s_{n+1}=0$ or $s_n=s_{n+1}=1$, we see that $\mu$ is in the lattice $L^+$ for $B_n$ and hence Lemma \ref{key2} follows for $B_n$.

 \subsection{Type $G_2$}\label{G_2}  If $\lie g$ is of type $G_2$, we assume that $\alpha_2$ is the simple short root. We note that it is enough to prove that there exists a $\mu\in L^+$, which satisfies \eqref{keyineq} only on long roots.  This is because any non-simple short root can be written as either a half or a third of the sum of two long roots.  Next, we observe that we have, $$(\omega_1,\alpha_1)=1,\ \ (\omega_2,\alpha_2)=1/3.$$   Let $\mu$ be the following weight in $L^+$, \newline
$$\mu
 =\begin{cases}
0, & \text{if }(\lambda,2\alpha_1+3\alpha_2)\leq\ell \\
\omega_1, & \text{if }\ell<(\lambda, 2\alpha_1+3\alpha_2)\leq3\ell \textrm{ and } (\lambda,\alpha_1+3\alpha_2)\leq 2\ell \\
3\omega_2, & \text{if }2\ell<(\lambda,2\alpha_1+3\alpha_2)\leq4\ell \textrm{ and } (\lambda,\alpha_1+3\alpha_2)> 2\ell \\
\omega_1+3\omega_2, & \text{if }4\ell<(\lambda,2\alpha_1+3\alpha_2)\leq5\ell \\
\end{cases}$$
where we note that the last condition $4\ell < (\lambda, 2\alpha_1+3\alpha_2)$ implies that $(\lambda, \alpha_1+3\alpha_2)>3\ell$.  Therefore, one can check easily that the condition $|(\ell\mu - \lambda, \alpha)|\leq \ell$ is satisfied for all positive long roots, and hence all positive roots.

\subsection{The case of $E$ and $F_4$}  It is clear that it suffices to prove Proposition \ref{key} for $E_8$ and $F_4$. The methods of this section do not appear to generalize to these cases. However, it is possible to check using mathematica  that Proposition \ref{key}  is true for $\ell$ at least five.
In the tables in the appendix, we associate to the ordered pair $(a_1,\cdots,a_n)$ the weight $\nu = \sum_{i=1}^na_i\omega_i$.  For $\ell=2$, we provide one solution  for every  $\lambda$  with $\lambda(h_i)\le 1$ for all $1\le i\le n$.

\vfill\eject

\appendix

\section{Mathematica output:  $F_4$ and $\ell =2$}

\definecolor{lightblue}{RGB}{220,220,255}
\begin{table}[!h]
    \centering
    \begin{tabular}{||>{\columncolor[gray]{0.9}}c|c||>{\columncolor[gray]{0.9}}c|c||}
        \hline\hline
      \rowcolor{lightblue} $\lambda$&$\mu$&$\lambda$&$\mu$\\\hline\hline
        (0,0,0,0)&(0,0,0,0)&(1,0,0,0)&(0,0,0,0)\\\hline
        (0,0,0,1)&(0,0,0,0)&(1,0,0,1)&(1,0,0,0)\\\hline
        (0,0,0,2)&(0,0,0,0)&(1,0,0,2)&(0,0,0,2)\\\hline
        (0,0,0,3)&(0,0,0,2)&(1,0,0,3)&(0,0,0,2)\\\hline
        (0,0,1,0)&(0,0,0,0)&(1,0,1,0)&(1,0,0,0)\\\hline
        (0,0,1,1)&(0,0,0,2)&(1,0,1,1)&(0,0,0,2)\\\hline
        (0,0,1,2)&(0,0,0,2)&(1,0,1,2)&(0,0,0,2)\\\hline
        (0,0,1,3)&(0,0,0,2)&(1,0,1,3)&(1,0,0,2)\\\hline
        (0,0,2,0)&(0,0,0,2)&(1,0,2,0)&(0,1,0,0)\\\hline
        (0,0,2,1)&(0,0,0,2)&(1,0,2,1)&(0,0,2,0)\\\hline
        (0,0,2,2)&(0,0,0,2)&(1,0,2,2)&(0,0,2,0)\\\hline
        (0,0,2,3)&(0,0,2,0)&(1,0,2,3)&(0,0,2,0)\\\hline
        (0,0,3,0)&(0,0,2,0)&(1,0,3,0)&(0,0,2,0)\\\hline
        (0,0,3,1)&(0,0,2,0)&(1,0,3,1)&(0,0,2,0)\\\hline
        (0,0,3,2)&(0,0,2,0)&(1,0,3,2)&(0,0,2,0)\\\hline
        (0,0,3,3)&(0,0,2,0)&(1,0,3,3)&(0,0,2,2)\\\hline
         (0,1,0,0)&(1,0,0,0)&(1,1,0,0)&(0,1,0,0)\\\hline
        (0,1,0,1)&(0,0,0,2)&(1,1,0,1)&(0,1,0,0)\\\hline
        (0,1,0,2)&(0,0,0,2)&(1,1,0,2)&(0,1,0,0)\\\hline
        (0,1,0,3)&(0,0,0,2)&(1,1,0,3)&(1,0,0,2)\\\hline
        (0,1,1,0)&(0,1,0,0)&(1,1,1,0)&(0,1,0,0)\\\hline
        (0,1,1,1)&(0,1,0,0)&(1,1,1,1)&(0,0,2,0)\\\hline
        (0,1,1,2)&(0,0,2,0)&(1,1,1,2)&(0,0,2,0)\\\hline
        (0,1,1,3)&(0,0,2,0)&(1,1,1,3)&(0,1,0,2)\\\hline
        (0,1,2,0)&(0,0,2,0)&(1,1,2,0)&(0,0,2,0)\\\hline
        (0,1,2,1)&(0,0,2,0)&(1,1,2,1)&(0,0,2,0)\\\hline
        (0,1,2,2)&(0,0,2,0)&(1,1,2,2)&(0,1,0,2)\\\hline
        (0,1,2,3)&(0,1,0,2)&(1,1,2,3)&(0,0,2,2)\\\hline
        (0,1,3,0)&(0,0,2,0)&(1,1,3,0)&(1,0,2,0)\\\hline
        (0,1,3,1)&(0,0,2,0)&(1,1,3,1)&(1,0,2,0)\\\hline
        (0,1,3,2)&(0,0,2,2)&(1,1,3,2)&(0,0,2,2)\\\hline
        (0,1,3,3)&(0,0,2,2)&(1,1,3,3)&(0,0,2,2)\\\hline

    \end{tabular}
\end{table}

\vfill\eject

\section{Mathematica output:  $E_8$ and $\ell=2$}

\definecolor{lightblue}{RGB}{220,220,255}
\begin{table}[h!]
    \centering
    \begin{tabular}{||>{\columncolor[gray]{0.9}}c|c||>{\columncolor[gray]{0.9}}c|c||}
        \hline\hline
      \rowcolor{lightblue} $\lambda$&$\mu$&$\lambda$&$\mu$\\\hline\hline
        (0,0,0,0,0,0,0,0)&(0,0,0,0,0,0,0,0)&(0,0,0,1,0,1,0,0)&(0,0,0,0,1,0,0,0)\\\hline
        (1,0,0,0,0,0,0,0)&(1,0,0,0,0,0,0,0)&(1,0,0,1,0,1,0,0)&(0,0,0,1,0,0,0,0)\\\hline
        (0,1,0,0,0,0,0,0)&(0,0,0,0,0,0,0,1)&(0,1,0,1,0,1,0,0)&(0,0,0,1,0,0,0,0)\\\hline
        (1,1,0,0,0,0,0,0)&(0,1,0,0,0,0,0,0)&(1,1,0,1,0,1,0,0)&(0,0,0,1,0,0,0,1)\\\hline
        (0,0,1,0,0,0,0,0)&(1,0,0,0,0,0,0,0)&(0,0,1,1,0,1,0,0)&(0,0,0,1,0,0,0,0)\\\hline
        (1,0,1,0,0,0,0,0)&(0,0,1,0,0,0,0,0)&(1,0,1,1,0,1,0,0)&(0,0,1,0,0,1,0,0)\\\hline
        (0,1,1,0,0,0,0,0)&(0,0,1,0,0,0,0,0)&(0,1,1,1,0,1,0,0)&(0,0,0,1,0,0,1,0)\\\hline
        (1,1,1,0,0,0,0,0)&(0,0,1,0,0,0,0,0)&(1,1,1,1,0,1,0,0)&(0,0,0,1,0,1,0,0)\\\hline
        (0,0,0,1,0,0,0,0)&(0,1,0,0,0,0,0,0)&(0,0,0,0,1,1,0,0)&(0,0,0,0,0,1,0,0)\\\hline
        (1,0,0,1,0,0,0,0)&(0,0,1,0,0,0,0,0)&(1,0,0,0,1,1,0,0)&(0,0,0,0,1,0,0,0)\\\hline
        (0,1,0,1,0,0,0,0)&(0,0,0,0,1,0,0,0)&(0,1,0,0,1,1,0,0)&(0,0,0,0,1,0,0,0)\\\hline
        (1,1,0,1,0,0,0,0)&(0,0,0,1,0,0,0,0)&(1,1,0,0,1,1,0,0)&(0,1,0,0,0,1,0,0)\\\hline
        (0,0,1,1,0,0,0,0)&(0,0,0,0,1,0,0,0)&(0,0,1,0,1,1,0,0)&(0,0,0,0,1,0,0,1)\\\hline
        (1,0,1,1,0,0,0,0)&(0,0,0,1,0,0,0,0)&(1,0,1,0,1,1,0,0)&(0,0,1,0,0,1,0,0)\\\hline
        (0,1,1,1,0,0,0,0)&(0,0,0,1,0,0,0,0)&(0,1,1,0,1,1,0,0)&(0,0,1,0,0,1,0,0)\\\hline
        (1,1,1,1,0,0,0,0)&(0,1,1,0,0,0,0,0)&(1,1,1,0,1,1,0,0)&(0,0,1,0,1,0,0,0)\\\hline
        (0,0,0,0,1,0,0,0)&(0,0,0,0,0,0,1,0)&(0,0,0,1,1,1,0,0)&(0,0,0,0,1,0,1,0)\\\hline
        (1,0,0,0,1,0,0,0)&(0,0,0,0,0,1,0,0)&(1,0,0,1,1,1,0,0)&(0,0,0,0,1,1,0,0)\\\hline
        (0,1,0,0,1,0,0,0)&(0,0,0,0,0,1,0,0)&(0,1,0,1,1,1,0,0)&(0,0,0,0,1,1,0,0)\\\hline
        (1,1,0,0,1,0,0,0)&(0,0,0,0,1,0,0,0)&(1,1,0,1,1,1,0,0)&(0,0,0,1,0,1,0,0)\\\hline
        (0,0,1,0,1,0,0,0)&(0,0,0,0,1,0,0,0)&(0,0,1,1,1,1,0,0)&(0,0,0,1,0,1,0,0)\\\hline
        (1,0,1,0,1,0,0,0)&(0,0,0,0,1,0,0,0)&(1,0,1,1,1,1,0,0)&(0,0,0,1,0,1,0,0)\\\hline
        (0,1,1,0,1,0,0,0)&(0,0,0,1,0,0,0,0)&(0,1,1,1,1,1,0,0)&(0,0,0,1,1,0,0,0)\\\hline
        (1,1,1,0,1,0,0,0)&(0,1,1,0,0,0,0,0)&(1,1,1,1,1,1,0,0)&(0,1,1,0,1,0,0,0)\\\hline
        (0,0,0,1,1,0,0,0)&(0,0,0,0,1,0,0,0)&(0,0,0,0,0,0,1,0)&(0,0,0,0,0,0,0,1)\\\hline
        (1,0,0,1,1,0,0,0)&(0,0,0,1,0,0,0,0)&(1,0,0,0,0,0,1,0)&(0,0,0,0,0,0,1,0)\\\hline
        (0,1,0,1,1,0,0,0)&(0,0,0,1,0,0,0,0)&(0,1,0,0,0,0,1,0)&(0,0,0,0,0,0,1,0)\\\hline
        (1,1,0,1,1,0,0,0)&(0,1,0,0,1,0,0,0)&(1,1,0,0,0,0,1,0)&(0,0,0,0,0,1,0,0)\\\hline
        (0,0,1,1,1,0,0,0)&(0,0,0,1,0,0,0,1)&(0,0,1,0,0,0,1,0)&(0,0,0,0,0,0,1,0)\\\hline
        (1,0,1,1,1,0,0,0)&(0,0,1,0,1,0,0,0)&(1,0,1,0,0,0,1,0)&(0,0,1,0,0,0,0,0)\\\hline
        (0,1,1,1,1,0,0,0)&(0,0,1,0,1,0,0,0)&(0,1,1,0,0,0,1,0)&(0,0,0,0,1,0,0,0)\\\hline
        (1,1,1,1,1,0,0,0)&(0,0,1,1,0,0,0,0)&(1,1,1,0,0,0,1,0)&(0,0,0,1,0,0,0,0)\\\hline
        (0,0,0,0,0,1,0,0)&(0,0,0,0,0,0,0,1)&(0,0,0,1,0,0,1,0)&(0,0,0,0,0,1,0,0)\\\hline
        (1,0,0,0,0,1,0,0)&(0,0,0,0,0,0,1,0)&(1,0,0,1,0,0,1,0)&(0,0,0,0,1,0,0,0)\\\hline
        (0,1,0,0,0,1,0,0)&(0,0,0,0,0,1,0,0)&(0,1,0,1,0,0,1,0)&(0,0,0,1,0,0,0,0)\\\hline
        (1,1,0,0,0,1,0,0)&(0,0,0,0,0,1,0,0)&(1,1,0,1,0,0,1,0)&(0,0,0,1,0,0,0,0)\\\hline
        (0,0,1,0,0,1,0,0)&(0,0,0,0,0,1,0,0)&(0,0,1,1,0,0,1,0)&(0,0,0,1,0,0,0,0)\\\hline
        (1,0,1,0,0,1,0,0)&(0,0,0,0,1,0,0,0)&(1,0,1,1,0,0,1,0)&(0,0,0,1,0,0,1,0)\\\hline
        (0,1,1,0,0,1,0,0)&(0,0,0,0,1,0,0,0)&(0,1,1,1,0,0,1,0)&(0,0,0,1,0,0,0,1)\\\hline

    \end{tabular}
\end{table}

\begin{table}[h!]
    \centering
    \begin{tabular}{||>{\columncolor[gray]{0.9}}c|c||>{\columncolor[gray]{0.9}}c|c||}
        \hline\hline
      \rowcolor{lightblue} $\lambda$&$\mu$&$\lambda$&$\mu$\\\hline\hline
 (1,1,1,0,0,1,0,0)&(0,0,0,1,0,0,0,0)&(1,1,1,1,0,0,1,0)&(0,0,0,1,0,0,1,0)\\\hline
           (0,0,0,0,1,0,1,0)&(0,0,0,0,0,1,0,0)&(0,0,0,1,1,1,1,0)&(0,0,0,0,1,1,0,0)\\\hline
        (1,0,0,0,1,0,1,0)&(0,0,0,0,1,0,0,0)&(1,0,0,1,1,1,1,0)&(0,0,0,1,0,1,0,0)\\\hline
        (0,1,0,0,1,0,1,0)&(0,0,0,0,1,0,0,0)&(0,1,0,1,1,1,1,0)&(0,0,0,1,0,1,0,0)\\\hline
        (1,1,0,0,1,0,1,0)&(0,0,0,0,1,0,0,1)&(1,1,0,1,1,1,1,0)&(0,0,0,1,0,1,0,1)\\\hline
        (0,0,1,0,1,0,1,0)&(0,0,0,0,1,0,0,0)&(0,0,1,1,1,1,1,0)&(0,0,0,1,0,1,0,0)\\\hline
        (1,0,1,0,1,0,1,0)&(0,0,1,0,0,0,1,0)&(1,0,1,1,1,1,1,0)&(0,0,1,0,1,0,1,0)\\\hline
        (0,1,1,0,1,0,1,0)&(0,0,0,0,1,0,1,0)&(0,1,1,1,1,1,1,0)&(0,0,0,1,0,1,1,0)\\\hline
        (1,1,1,0,1,0,1,0)&(0,0,0,1,0,0,1,0)&(1,1,1,1,1,1,1,0)&(0,0,0,1,1,0,1,0)\\\hline
        (0,0,0,1,1,0,1,0)&(0,0,0,0,1,0,0,1)&(0,0,0,0,0,0,0,1)&(0,0,0,0,0,0,0,0)\\\hline
        (1,0,0,1,1,0,1,0)&(0,0,0,0,1,0,1,0)&(1,0,0,0,0,0,0,1)&(0,0,0,0,0,0,0,1)\\\hline
        (0,1,0,1,1,0,1,0)&(0,0,0,1,0,0,1,0)&(0,1,0,0,0,0,0,1)&(0,0,0,0,0,0,0,1)\\\hline
        (1,1,0,1,1,0,1,0)&(0,0,0,1,0,0,1,0)&(1,1,0,0,0,0,0,1)&(0,1,0,0,0,0,0,0)\\\hline
        (0,0,1,1,1,0,1,0)&(0,0,0,1,0,0,1,0)&(0,0,1,0,0,0,0,1)&(0,0,0,0,0,0,1,0)\\\hline
        (1,0,1,1,1,0,1,0)&(0,0,0,1,0,1,0,0)&(1,0,1,0,0,0,0,1)&(0,0,1,0,0,0,0,0)\\\hline
        (0,1,1,1,1,0,1,0)&(0,0,0,1,0,1,0,0)&(0,0,0,1,0,0,0,1)&(0,0,0,0,0,1,0,0)\\\hline
        (1,1,1,1,1,0,1,0)&(0,0,0,1,1,0,0,0)&(1,0,0,1,0,0,0,1)&(0,0,0,0,1,0,0,0)\\\hline
        (0,0,0,0,0,1,1,0)&(0,0,0,0,0,0,1,0)&(0,1,0,1,0,0,0,1)&(0,0,0,0,1,0,0,0)\\\hline
        (1,0,0,0,0,1,1,0)&(0,0,0,0,0,1,0,0)&(1,1,0,1,0,0,0,1)&(0,0,0,1,0,0,0,0)\\\hline
        (0,1,0,0,0,1,1,0)&(0,0,0,0,0,1,0,0)&(0,0,1,1,0,0,0,1)&(0,0,0,1,0,0,0,0)\\\hline
        (1,1,0,0,0,1,1,0)&(0,1,0,0,0,0,1,0)&(1,0,1,1,0,0,0,1)&(0,0,0,1,0,0,0,0)\\\hline
        (0,0,1,0,0,1,1,0)&(0,0,0,0,0,1,0,1)&(0,1,1,1,0,0,0,1)&(0,0,0,1,0,0,0,1)\\\hline
        (1,0,1,0,0,1,1,0)&(0,0,1,0,0,0,1,0)&(1,1,1,1,0,0,0,1)&(0,1,1,0,0,0,0,1)\\\hline
        (0,1,1,0,0,1,1,0)&(0,0,1,0,0,0,1,0)&(0,0,0,0,1,0,0,1)&(0,0,0,0,0,0,1,0)\\\hline
        (1,1,1,0,0,1,1,0)&(0,0,1,0,0,1,0,0)&(1,0,0,0,1,0,0,1)&(0,0,0,0,0,1,0,0)\\\hline
        (0,0,0,1,0,1,1,0)&(0,0,0,0,0,1,1,0)&(0,1,0,0,1,0,0,1)&(0,0,0,0,1,0,0,0)\\\hline
        (1,0,0,1,0,1,1,0)&(0,0,0,0,1,0,1,0)&(1,1,0,0,1,0,0,1)&(0,0,0,0,1,0,0,0)\\\hline
        (0,1,0,1,0,1,1,0)&(0,0,0,0,1,0,1,0)&(0,0,1,0,1,0,0,1)&(0,0,0,0,1,0,0,0)\\\hline
        (1,1,0,1,0,1,1,0)&(0,0,0,1,0,0,1,0)&(1,0,1,0,1,0,0,1)&(0,0,0,0,1,0,0,1)\\\hline
        (0,0,1,1,0,1,1,0)&(0,0,0,1,0,0,1,0)&(0,1,1,0,1,0,0,1)&(0,0,0,0,1,0,0,1)\\\hline
        (1,0,1,1,0,1,1,0)&(0,0,0,1,0,0,1,0)&(1,1,1,0,1,0,0,1)&(0,0,0,1,0,0,0,1)\\\hline
        (0,1,1,1,0,1,1,0)&(0,0,0,1,0,1,0,0)&(0,0,0,1,1,0,0,1)&(0,0,0,0,1,0,0,1)\\\hline
        (1,1,1,1,0,1,1,0)&(0,1,1,0,0,1,0,0)&(1,0,0,1,1,0,0,1)&(0,0,0,1,0,0,0,1)\\\hline
        (0,0,0,0,1,1,1,0)&(0,0,0,0,0,1,0,1)&(0,1,0,1,1,0,0,1)&(0,0,0,1,0,0,0,1)\\\hline
        (1,0,0,0,1,1,1,0)&(0,0,0,0,0,1,1,0)&(1,1,0,1,1,0,0,1)&(0,0,0,1,0,0,1,0)\\\hline
        (0,1,0,0,1,1,1,0)&(0,0,0,0,1,0,1,0)&(0,0,1,1,1,0,0,1)&(0,0,0,1,0,0,0,1)\\\hline
        (1,1,0,0,1,1,1,0)&(0,0,0,0,1,0,1,0)&(1,0,1,1,1,0,0,1)&(0,0,1,0,1,0,0,0)\\\hline
        (0,0,1,0,1,1,1,0)&(0,0,0,0,1,0,1,0)&(0,1,1,1,1,0,0,1)&(0,0,0,1,0,1,0,0)\\\hline
        (1,0,1,0,1,1,1,0)&(0,0,0,0,1,1,0,0)&(1,1,1,1,1,0,0,1)&(0,0,0,1,1,0,0,0)\\\hline
        (0,1,1,0,1,1,1,0)&(0,0,0,0,1,1,0,0)&(0,0,0,0,0,1,0,1)&(0,0,0,0,0,0,1,0)\\\hline
        (1,1,1,0,1,1,1,0)&(0,0,0,1,0,1,0,0)&(1,0,0,0,0,1,0,1)&(0,0,0,0,0,1,0,0)\\\hline
        
    \end{tabular}
\end{table}

\begin{table}[h!]
    \centering
    \begin{tabular}{||>{\columncolor[gray]{0.9}}c|c||>{\columncolor[gray]{0.9}}c|c||}
        \hline\hline
      \rowcolor{lightblue} $\lambda$&$\mu$&$\lambda$&$\mu$\\\hline\hline
        (0,1,0,0,0,1,0,1)&(0,0,0,0,0,1,0,0)&(0,0,0,1,0,0,1,1)&(0,0,0,0,0,1,0,1)\\\hline
        (0,1,0,0,0,1,0,1)&(0,0,0,0,0,1,0,0)&(1,0,0,1,0,0,1,1)&(0,0,0,0,1,0,0,1)\\\hline
        (1,1,0,0,0,1,0,1)&(0,0,0,0,0,1,0,1)&(0,1,0,1,0,0,1,1)&(0,0,0,0,1,0,0,1)\\\hline
        (0,0,1,0,0,1,0,1)&(0,0,0,0,0,1,0,0)&(1,1,0,1,0,0,1,1)&(0,0,0,1,0,0,0,1)\\\hline
        (1,0,1,0,0,1,0,1)&(0,0,1,0,0,0,0,1)&(0,0,1,1,0,0,1,1)&(0,0,0,1,0,0,0,1)\\\hline
        (0,1,1,0,0,1,0,1)&(0,0,0,0,1,0,0,1)&(1,0,1,1,0,0,1,1)&(0,0,0,1,0,0,0,1)\\\hline
        (1,1,1,0,0,1,0,1)&(0,0,0,1,0,0,0,1)&(0,1,1,1,0,0,1,1)&(0,0,0,1,0,0,1,0)\\\hline
        (0,0,0,1,0,1,0,1)&(0,0,0,0,0,1,0,1)&(1,1,1,1,0,0,1,1)&(0,1,1,0,0,0,1,0)\\\hline
        (0,0,0,1,0,1,0,1)&(0,0,0,0,0,1,0,1)&(0,0,0,0,1,0,1,1)&(0,0,0,0,0,0,1,1)\\\hline
        (1,0,0,1,0,1,0,1)&(0,0,0,0,1,0,0,1)&(1,0,0,0,1,0,1,1)&(0,0,0,0,0,1,0,1)\\\hline
        (0,1,0,1,0,1,0,1)&(0,0,0,1,0,0,0,1)&(0,1,0,0,1,0,1,1)&(0,0,0,0,1,0,0,1)\\\hline
        (1,1,0,1,0,1,0,1)&(0,0,0,1,0,0,0,1)&(1,1,0,0,1,0,1,1)&(0,0,0,0,1,0,0,1)\\\hline
        (0,0,1,1,0,1,0,1)&(0,0,0,1,0,0,0,1)&(0,0,1,0,1,0,1,1)&(0,0,0,0,1,0,0,1)\\\hline
        (1,0,1,1,0,1,0,1)&(0,0,0,1,0,0,1,0)&(1,0,1,0,1,0,1,1)&(0,0,0,0,1,0,1,0)\\\hline
        (0,1,1,1,0,1,0,1)&(0,0,0,1,0,0,1,0)&(0,1,1,0,1,0,1,1)&(0,0,0,0,1,0,1,0)\\\hline
        (1,1,1,1,0,1,0,1)&(0,0,0,1,0,1,0,0)&(1,1,1,0,1,0,1,1)&(0,0,0,1,0,0,1,0)\\\hline
        (0,0,0,0,1,1,0,1)&(0,0,0,0,0,1,0,1)&(0,0,0,1,1,0,1,1)&(0,0,0,0,1,0,1,0)\\\hline
        (1,0,0,0,1,1,0,1)&(0,0,0,0,1,0,0,1)&(1,0,0,1,1,0,1,1)&(0,0,0,1,0,0,1,0)\\\hline
        (0,1,0,0,1,1,0,1)&(0,0,0,0,1,0,0,1)&(0,1,0,1,1,0,1,1)&(0,0,0,1,0,0,1,0)\\\hline
        (1,1,0,0,1,1,0,1)&(0,0,0,0,1,0,1,0)&(1,1,0,1,1,0,1,1)&(0,0,0,1,0,0,1,1)\\\hline
        (0,0,1,0,1,1,0,1)&(0,0,0,0,1,0,0,1)&(0,0,1,1,1,0,1,1)&(0,0,0,1,0,0,1,0)\\\hline
        (1,0,1,0,1,1,0,1)&(0,0,1,0,0,1,0,0)&(1,0,1,1,1,0,1,1)&(0,0,1,0,1,0,0,1)\\\hline
        (0,1,1,0,1,1,0,1)&(0,0,0,0,1,1,0,0)&(0,1,1,1,1,0,1,1)&(0,0,0,1,0,1,0,1)\\\hline
        (1,1,1,0,1,1,0,1)&(0,0,0,1,0,1,0,0)&(1,1,1,1,1,0,1,1)&(0,0,0,1,1,0,0,1)\\\hline
        (0,0,0,1,1,1,0,1)&(0,0,0,0,1,0,1,0)&(0,0,0,0,0,1,1,1)&(0,0,0,0,0,0,1,1)\\\hline
        (1,0,0,1,1,1,0,1)&(0,0,0,0,1,1,0,0)&(1,0,0,0,0,1,1,1)&(0,0,0,0,0,1,0,1)\\\hline
        (0,1,0,1,1,1,0,1)&(0,0,0,1,0,1,0,0)&(0,1,0,0,0,1,1,1)&(0,0,0,0,0,1,0,1)\\\hline
        (1,1,0,1,1,1,0,1)&(0,0,0,1,0,1,0,0)&(1,1,0,0,0,1,1,1)&(0,0,0,0,0,1,1,0)\\\hline
        (0,0,1,1,1,1,0,1)&(0,0,0,1,0,1,0,0)&(0,0,1,0,0,1,1,1)&(0,0,0,0,0,1,0,1)\\\hline
        (1,0,1,1,1,1,0,1)&(0,0,0,1,0,1,0,1)&(1,0,1,0,0,1,1,1)&(0,0,1,0,0,0,1,0)\\\hline
        (0,1,1,1,1,1,0,1)&(0,0,0,1,0,1,0,1)&(0,1,1,0,0,1,1,1)&(0,0,0,0,1,0,1,0)\\\hline
        (1,1,1,1,1,1,0,1)&(0,0,0,1,1,0,0,1)&(1,1,1,0,0,1,1,1)&(0,0,0,1,0,0,1,0)\\\hline
        (0,0,0,0,0,0,1,1)&(0,0,0,0,0,0,0,1)&(0,0,0,1,0,1,1,1)&(0,0,0,0,0,1,1,0)\\\hline
        (1,0,0,0,0,0,1,1)&(0,0,0,0,0,0,1,0)&(1,0,0,1,0,1,1,1)&(0,0,0,0,1,0,1,0)\\\hline
        (0,1,0,0,0,0,1,1)&(0,0,0,0,0,0,1,0)&(0,1,0,1,0,1,1,1)&(0,0,0,1,0,0,1,0)\\\hline
        (1,1,0,0,0,0,1,1)&(0,1,0,0,0,0,0,1)&(1,1,0,1,0,1,1,1)&(0,0,0,1,0,0,1,0)\\\hline
        (0,0,1,0,0,0,1,1)&(0,0,0,0,0,0,1,1)&(0,0,1,1,0,1,1,1)&(0,0,0,1,0,0,1,0)\\\hline
        (1,0,1,0,0,0,1,1)&(0,0,1,0,0,0,0,1)&(1,0,1,1,0,1,1,1)&(0,0,0,1,0,0,1,1)\\\hline
        (0,1,1,0,0,0,1,1)&(0,0,1,0,0,0,0,1)&(0,1,1,1,0,1,1,1)&(0,0,0,1,0,0,1,1)\\\hline
        (1,1,1,0,0,0,1,1)&(0,0,1,0,0,0,1,0)&(1,1,1,1,0,1,1,1)&(0,0,0,1,0,1,0,1)\\\hline
        
    \end{tabular}
\end{table}

\begin{table}[h!]
    \centering
    \begin{tabular}{||>{\columncolor[gray]{0.9}}c|c||>{\columncolor[gray]{0.9}}c|c||}
        \hline\hline
      \rowcolor{lightblue} $\lambda$&$\mu$&$\lambda$&$\mu$\\\hline\hline
        (0,0,0,0,1,1,1,1)&(0,0,0,0,0,1,1,0)&(0,0,0,1,1,1,1,1)&(0,0,0,0,1,0,1,1)\\\hline
        (1,0,0,0,1,1,1,1)&(0,0,0,0,1,0,1,0)&(1,0,0,1,1,1,1,1)&(0,0,0,0,1,1,0,1)\\\hline
        (0,1,0,0,1,1,1,1)&(0,0,0,0,1,0,1,0)&(0,1,0,1,1,1,1,1)&(0,0,0,1,0,1,0,1)\\\hline
        (1,1,0,0,1,1,1,1)&(0,0,0,0,1,0,1,1)&(1,1,0,1,1,1,1,1)&(0,0,0,1,0,1,0,1)\\\hline
        (0,0,1,0,1,1,1,1)&(0,0,0,0,1,0,1,0)&(0,0,1,1,1,1,1,1)&(0,0,0,1,0,1,0,1)\\\hline
        (1,0,1,0,1,1,1,1)&(0,0,1,0,0,1,0,1)&(1,0,1,1,1,1,1,1)&(0,0,0,1,0,1,1,0)\\\hline
        (0,1,1,0,1,1,1,1)&(0,0,0,0,1,1,0,1)&(0,1,1,1,1,1,1,1)&(0,0,0,1,0,1,1,0)\\\hline
        (1,1,1,0,1,1,1,1)&(0,0,0,1,0,1,0,1)&(1,1,1,1,1,1,1,1)&(0,0,0,1,1,0,1,0)\\\hline

    \end{tabular}
\end{table}

\vfill\eject

\end{document}